\journal{Physica D: Nonlinear Phenomena}
\begin{document}

\newtheorem{definition}{Definition}[section]
\newtheorem{lemma}{Lemma}[section]
\newtheorem{remark}{Remark}[section]
\newtheorem{theorem}{Theorem}[section]
\newtheorem{example}{Example}[section]
\newtheorem{corollary}{Corollary}[section]

\begin{frontmatter}

\title{Hamiltonian Systems with L\'evy Noise: Symplecticity, Hamilton's Principle and Averaging Principle\tnoteref{mytitlenote}}
\tnotetext[mytitlenote]{This work was partly supported by the NSF grant 1620449,  and NSFC grants 11531006 and 11771449.}

%% Group authors per affiliation:
%\author{Elsevier\fnref{myfootnote}}
%\address{Radarweg 29, Amsterdam}
%\fntext[myfootnote]{Since 1880.}

%% or include affiliations in footnotes:
\author[mymainaddress]{Pingyuan Wei}
\ead{weipingyuan@hust.edu.cn}

\author[mymainaddress]{Ying Chao}
\ead{yingchao1993@hust.edu.cn}

\author[mymainaddress,mysecondaryaddress]{Jinqiao Duan\corref{mycorrespondingauthor}}
\cortext[mycorrespondingauthor]{Corresponding author}
\ead{duan@iit.edu}

\address[mymainaddress]{School of Mathematics and Statistics, \& Center for Mathematical Sciences,  Huazhong University of Sciences and Technology, Wuhan 430074,  China}
\address[mysecondaryaddress]{Department of Applied Mathematics, Illinois Institute of Technology, Chicago, IL 60616, USA}

\begin{abstract}
This work focuses on topics related to Hamiltonian stochastic differential equations with L\'{e}vy noise. We first show that the phase flow of the  stochastic system preserves symplectic structure, and propose a stochastic version of Hamilton's principle by the corresponding formulation of the stochastic action integral and the Euler-Lagrange equation. Based on these properties, we further investigate the effective behaviour of a small transversal perturbation to a completely integrable stochastic Hamiltonian system with L\'{e}vy noise. We establish an averaging principle in the sense that the action component of solution converges to the solution of a stochastic differential equation when the scale parameter goes to zero. Furthermore, we obtain the estimation for the rate of this convergence. Finally, we present an example to illustrate these results.
\end{abstract}

\begin{keyword}
 Stochastic Hamiltonian systems; L\'evy noise; symplecticity; Hamilton's principle; averaging principle.
\MSC[2010] 60H10 \sep  60J51 \sep 58J37
\end{keyword}

\end{frontmatter}

%\linenumbers

\renewcommand{\theequation}{\thesection.\arabic{equation}}
\setcounter{equation}{0}

\section{Introduction }
\noindent
Certain nonlinear systems  have ``geometric'' structures, such as the Hamiltonian  structure \cite{Ar,Me,Wig}. Hamiltonian systems of ordinary differential equations (ODEs) widely appear in celestial mechanics, statistical mechanics, geophysics, and chemical physics. They are models for the dynamics of planets, motion of particles in a fluid,  and evolution of    other microscopic systems  \cite{Reza}. Hamiltonian systems have many  well-known properties. For example, it was known to Liouville that the flows of Hamiltonian systems possess the property of phase-volume preservation; Poincar\'e observed that the Hamiltonian flows are symplectic and geometrically preserve certain symplectic area along phase flow \cite{Poin}; based on Hamilton's principle, Hamiltonian equations of motion are closely   related to Euler-Lagrange differential equations \cite{Me,Tve}. As a matter of fact, these dynamical systems are often subject to perturbations. In the deterministic case, the perturbation theory of Hamiltonian systems have appeared long ago; see Arnold \cite{Ar} and Freidlin-Wentzell \cite{Fr} for details. Particularly, an averaging principle for an integrable Hamiltonian system has been studied in e.g. Arnold \cite{Ar}.
\par
It is important to take randomness into account when building mathematical models for complex phenomena  under uncertainty \cite{Duan}.
Stochastic differential equations (SDEs) with ``Hamiltonian structures" are appropriate models for randomly influenced Hamiltonian systems as studied in Bismut \cite{Bi}, and have also drawn much attention; see, for example, Brin-Freidlin \cite{BMFM}, MacKay \cite{Mac}, Misawa \cite{Misa}, Wu \cite{Wu}, Zhu-Huang \cite{Zhu}. In particular, Milstein et al. \cite{Mi2, Mi} proved the symplecticity for stochastic Hamiltonian systems with Brownian noise, and Wang et al. \cite{Wang} proposed a version of Hamilton's principle for the same systems to construct variational integrators; Pavon \cite{Pav} established variational principles in stochastic mechanics; Li \cite{Li} developed an averaging principle for a perturbed completely integrable stochastic Hamiltonian system with Brownian noise. For some specific physical Hamiltonian models, we refer to Cresson-Darses \cite{Cr} and Givon et al. \cite{Gi}.
\par
In view of the development on SDEs with Hamiltonian structures, the noise processes considered to date are mainly Gaussian noise in terms of Brownian motion. However, non-Gaussian random fluctuations should be introduced to capture some large moves and unpredictable events in various areas such as not only aforementioned celestial mechanics and statistical physics, but also mathematics finance and life science \cite{Duan,Ap,Sato,GarL}. L\'{e}vy motions are an important and useful class of non-Gaussian processes whose sample paths are  c\`{a}dl\`{a}g (right-continuous with left limit at each time instant). The study on stochastic systems driven by such processes have received increasing attentions recently, especially on developing proper averaging principles for these systems. For example, Albeverio et al. \cite{Br, Al} established ergodicity of L\'{e}vy-type operators and SDEs driven by jump noise with non-Lipschitz coefficients; H\"ogele-Ruffino \cite{Hog} and Gargate-Ruffino \cite{GGI} focused on averaging along foliated Brownian and L\'{e}vy diffusions, respectively, which generalized the approach by Li \cite{Li}, and H\"ogele-da Costa \cite{Hog2} further studied strong averaging along foliated L\'{e}vy diffusions with heavy tails on compact leaves.  For more information on averaging principle for stochastic systems driven by L\'{e}vy noise, we refer to Xu et al. \cite{Xu} and Bao et al. \cite{Bao}. ODEs and SDEs with ``Hamiltonian structures" usually exhibit some extraordinary properties. Nevertheless, averaging principles for SDEs driven by L\'{e}vy noise with ``Hamiltonian structures", and even some basic dynamics such as symplecticity (invariance under a transformation) and L\'{e}vy-type stochastic Hamilton's principle of these systems, have not yet been considered to date to the best of our knowledge.
\par
In this present paper,  we consider stochastic Hamiltonian systems with L\'{e}vy noise on symplectic manifolds. They are defined as Marcus SDEs whose drift vector fields and diffusion vector fields are Hamiltonian vector fields. Note that the Marcus integral  \cite{Ku,KPP, Mar} in L\'{e}vy case has the advantage of leading to ordinary chain rule of the Newton-Leibniz type under a transformation. This property makes the Marcus integral natural to use especially in connection with SDEs on manifolds \cite{El}.
\par
We first demonstrate that the phase flow of a stochastic Hamiltonian system with L\'{e}vy noise preserves symplectic structure, and then propose the formulation of L\'{e}vy-type stochastic action integral and Euler-Lagrange equation of motions, as well as the stochastic Hamilton's principle. These properties are derived by using the calculus of variations, and the demand of the systems being in Marcus sense will simplify the stochastic differential calculations in the proofs. It is important to note that the stochastic Hamiltonian systems with L\'{e}vy noise should be understood as special nonconservative systems, for which the L\'{e}vy noise is a nonconservative `force'. The symplecticity here is presented for the whole stochastic system instead of the original deterministic Hamiltonian system without the nonconservative force. The stochastic Hamilton's principle is also proposed on the basis of nonconservative mechanical systems.
\par
Based on these foundational work, we further investigate the effective behavior of a small transversal perturbation to a (completely) integrable stochastic Hamiltonian system with L\'{e}vy noise. As this integrable stochastic system is perturbed by a transversal smooth vector field of order $\varepsilon$ ($\varepsilon$ is a small parameter), the solution to the perturbed equation will not preserve the properties mentioned above. The main idea we will use is to consider the solution along the rescaled time $t/\varepsilon$. The motion splits into two parts with fast rotation along the unperturbed trajectories and slow motion across them. Indeed, by an action-angle coordinate, the fast rotation is an diffusion on the invariant torus and the slow motion is governed by the  transversal component. When averaged by ergodic invariant measure on torus, the evolution of action component of the motion does not depend on the angular variable when $\varepsilon$ tends to zero. The essential transversal behavior is captured by a system of  ODEs for the transversal component and this result is referred as an averaging principle. The estimation for rate of convergence for this averaing principle is also established. Some inspiration for this part came from Li \cite{Li}, as well as H\"ogele-de Costa \cite{Hog2}. The main novelty of our work is that the model we consider here combines features of a Hamiltonian structure with stochastic non-Gaussian L\'evy noise.
\par
The rest of this paper is organized as follows.
In Section 2, we recall basic concepts about Hamiltonian vector fields and L\'{e}vy motions, and then present the definition of stochastic Hamiltonian system with L\'{e}vy noise, together with  the existence and uniqueness of  the solution.
In Section 3, we show that the phase flow of this stochastic system preserves the symplectic structure. By considering a stochastic Hamiltonian system with L\'{e}vy noise as a special nonconservative system, we propose a stochastic version of Hamilton's principle. The goal of this section is to better understand such a system and to establish foundation for the following sections of this paper. In Section 4, we investigate an integrable stochastic Hamiltonian system,  with L\'{e}vy noise,  perturbed by a transversal smooth vector field. After discussing the ergodic behavior and some technical issues, we  establish an averaging principle, together with a specific illustrative example.

\renewcommand{\theequation}{\thesection.\arabic{equation}}
\setcounter{equation}{0}

\section{Preliminaries}
\subsection{Stochastic Hamiltonian Systems with L\'evy Noise}
\noindent%%%%%%%%%SDE%%%%%%%%%%%%%%%%%%%
Let $( \Omega , \mathscr{F}, \{ \mathscr{F}_t \}_{t\geqslant0} , P)$ be a filtered probability space endowed with a Poisson random measure $N$ on $( \mathbb{R}^d\setminus \{ 0 \})\times \mathbb{R}_{+}$ with jump intensity measure $\nu = \mathbb{E}N(1,\cdot)$. Denote by $\widetilde N$ the associated compensated Poisson random measure, that is, $\widetilde N(dt,dz) = N(dt,dz) - \nu (dz)dt$. We assume that the filtration $\{ \mathscr{F}_t)_{t\geqslant0} \}$ satisfies the usual conditions \cite{Protter}. Let $L_t={L(t)}$ be a $d$-dimensional  L\'{e}vy process with the generating triplet $(\gamma, A, \nu)$. By L\'evy-It\^o decomposition \cite{Ap, Sato, Ku}, 
$$%\begin{equation}\label{Equation-LI}
{L_t}= \gamma t + B_{A}(t) + \int_{|z|< 1} z \widetilde N(t,dz) + \int_{|z|\ge 1} z N(t,dz),
$$%\end{equation}
where $\gamma \in {\mathbb{R}^d}$ is a drift vector, $B_A(t)$ is an independent $d$-dimensional Brownian motion with covariance matrix $A$, and the last two terms describe the `small jumps' and `big jumps' of L\'{e}vy process, respectively. In the following, we denote $L_c(t)=\gamma t + B_{A}(t)$ as the continuous part of $L_t$ and $L_d(t)=L_t-L_c(t)$ as the discontinuous part.
\par
Given a smooth Hamiltonian $H_0$ and a family of $n$ smooth Hamiltonians $\{ H_k \}_{k=1}^{n}$ on a smooth $2n$-dimensional manifold $M$ \cite{Ar,AM}. We denote by $V_0$ and $V_k$ $(k=1,2,...,d)$ the corresponding Hamiltonian vector fields, that is, 
$$dH_0(\cdot)=\omega^2(\cdot,V_0), \; dH_k(\cdot)=\omega^2(\cdot,V_k),$$
where $\omega^2$ is the symplectic form. Note that we use the symbol with superscript $2$ for the symplectic form to avoid confusion with the customary symbol for chance variable on sample space $\Omega$. \\
\par
We shall consider stochastic Hamiltonian systems driven by non-Gaussian L\'evy noise, which are described by the following SDEs in the Marcus form on $M$:
\begin{equation}\label{Equation-1}
dX=V_0(X)dt+\sum_{k=1}^d V_k (X) \diamond dL^k (t), ~ X_0:=X(t_0)=x\in M,
\end{equation}
or equivalently,
\begin{equation}\label{Equation-2}
X_t=x+\int_0^t V_0(X_s)ds+\sum_{k=1}^d \int_0^t V_k (X_{s-}) \diamond dL^k (s),
\end{equation}
where ``$\diamond$" stands for Marcus integral \cite{Ku, KPP, Mar} defined by
\begin{eqnarray}\label{Marcus}
\int_0^t V_k (X_{s-}) \diamond dL_k (s)
=\int_0^t V_k (X_{s-}) \circ dL_c^k(s)+\int_0^t V_k (X_{s-})dL_d^k(s)\notag\\
+\sum_{0\leqslant s\leqslant t}\left[ \phi(\Delta L^k (s), V_k(X_{s-}), X_{s-}) - X_{s-}-V_k (X_{s-})\Delta L^k (s)
\right]
\end{eqnarray}
with $\int\circ dL_c^k(s)$ denoting the Stratonovtich integral, $\int dL_d^k(s)$ denoting the It\^o integral and
$\phi(l,v(x),x)$ being the value at $t=1$ of the solution of the following ODE:
\begin{equation}\label{Equation-phi}
\frac{d}{dt}\xi(t)= v(\xi(t))l,~\xi(0)=x.
\end{equation}
Note that Marcus SDEs (\ref{Equation-1}) satisfy chain rule under a transformation (change of variable) and $\mathbb{P}(X_0\in M)=1$ implies that $\mathbb{P}(X_t\in M, t\geqslant 0)=1$, for details see Kurtz et al. \cite{KPP}.
\par
We remark that, by L\'evy-It\^o decomposition, the systems (\ref{Equation-1}) with L\'evy triplet being $(0,I,0)$ are stochastic Hamiltonian systems with Brownian noise \cite{Mi2,Mi,Wang}, and the system (\ref{Equation-1}) without L\'evy term are deterministic Hamiltonian systems. 

%%%%%%%%%%%%%%%%%%%%%%%%%%%%%%%%%%%%%%%%%%%%%%%%%%%%%%%

\subsection{ Existence and Uniqueness} 
In order to ensure the existence and uniqueness for the stochastic dynamical systems with Hamiltonian structure, we will need to make some assumptions. First we rewrite the Marcus equations (\ref{Equation-1}) and (\ref{Equation-2}) in the It\^o form \cite{Ap,Ku}. This can be carried out by employing the L\'evy-It\^o decomposition. Note that it's convenient to write the $d$-dimensional Brownian term in the form: $B_A(t)=\sigma B(t)$ \cite{Duan, Ap},  where $B(t)$ is a $d^{\prime}$-dimensional standard Brownian motion and $\sigma$ is a $d \times d^{\prime}$ nonzero matrix for which $A=\sigma\sigma^T$. For simplicity, we consider the Brownian term as a standard Brownian motion here, i.e., we set $A=I$. Then we obtain, for $1\leqslant i \leqslant 2n$, $t\geqslant0$,
\begin{eqnarray}\label{Eqnarray-3}
dX_t^i
&= &V_0^{i}(X_t)dt+\sum_{k=1}^d \gamma^k V_{k}^{i}(X_t)dt+\sum_{k=1}^d V_{k}^{i}(X_t)dB^k(t) + \frac{1}{2}\sum_{k=1}^d V_{k}\cdot \nabla V_{k}^{i}(X_t)dt \notag\\
&&+\int_{|z|<1}[\phi^{i}(z)(X_{t-})-X_{t-}^i]\tilde{N}(dt,dz) +\int_{|z|\geqslant1}[\phi^{i}(z)(X_{t-})-X_{t-}^i]{N}(dt,dz) \notag\\
&&+\int_{|z|<1}[\phi^{i}(z)(X_{t-})-X_t^i-\sum_{k=1}^d z^k V_k^i(X_{t-})]\nu(dz)dt.
\end{eqnarray}

\par
Denote by $\hat{D}V(x)$ the vector in $M$ whose $i$-th component is ${\max_{1\leqslant k \leqslant d} |V_{k}\cdot \nabla V_{k}^{i}(x)|}$ for $1\leqslant i \leqslant 2n, x\in M$. We make the following assumptions.
 \emph{
\begin{itemize}
\item[\bf{A1}.] The vector field $V_0$ is locally Lipschitz and the  vector fields $V_k$ $(k=1,2,...,d)$ and $\hat{D}V(x)$ are globally Lipschitz in the following sense:\\
(i) For any $x\in M$, there exists a neighborhood $M_0$ of $x$ such that $V_0|_{M_0}$  is Lipschitz continuous, i.e. there is a constant $N_1(M_0)>0$ such that, 
$$
|V_0(x_1)-V_0(x_2)|\leqslant  N_1|x_1-x_2|,~x_1, x_2\in M_0.
$$
(ii) There is a constant $N_2>0$ such that, 
$$%\begin{equation}
\max_{1\leqslant k \leqslant d} |V_k(x_1)-V_k(x_2)|^2+|\hat{D}V(x_1)-\hat{D}V(x_2)|^2\leq N_1|x_1-x_2|^2, ~x_1, x_2\in M.
$$%\end{equation}
\item[\bf {A2}.] One sided linear growth condition: There exists a constant $N_3>0$ such that 
$$
\sum_{k=1}^d V_{k}^2(x)+2x\cdot V_0(x)\leq N_3(1+|x|^2), ~ x \in M.
$$
\end{itemize}
}
\begin{theorem}
Under assumptions {\bf A1} and {\bf A2}, there exists a unique global solution to (\ref{Eqnarray-3}), and the solution process is adapted and c\`{a}dl\`{a}g.
\end{theorem}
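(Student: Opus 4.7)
The plan is a standard truncation-plus-a-priori-estimate argument adapted to the L\'evy-driven system (\ref{Eqnarray-3}). First I would rewrite it in the abstract form
\[
dX_t = b(X_t)\,dt + \sigma(X_t)\,dB_t + \int_{|z|<1}\!F(z,X_{t-})\,\widetilde N(dt,dz) + \int_{|z|\geqslant 1}\!F(z,X_{t-})\,N(dt,dz),
\]
where $b$ collects all the deterministic drift contributions, $\sigma$ is the matrix with columns $V_k$, and $F(z,x):=\phi(z)(x)-x$. The drift $b$ inherits local Lipschitz continuity from \textbf{A1}(i), together with the bounds on $V_k\!\cdot\!\nabla V_k$ provided by $\hat D V$ via \textbf{A1}(ii); the matrix $\sigma$ is globally Lipschitz, again by \textbf{A1}(ii).

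The main technical point is controlling the jump coefficient $F(z,x)=\phi(z)(x)-x$. Since $\xi(t)$ solves the ODE (\ref{Equation-phi}) with vector field $\sum_k z^k V_k$, Gronwall yields $|\phi(z)(x_1)-\phi(z)(x_2)|\leqslant e^{L|z|}|x_1-x_2|$ where $L$ is the Lipschitz constant from \textbf{A1}(ii), giving a uniform Lipschitz bound in $x$ for $|z|<1$. A second-order Taylor expansion of $\xi(\cdot)$ at $t=0$ gives
\[
\phi(z)(x) - x - \sum_{k} z^k V_k(x) = \tfrac12 \sum_{k,l} z^k z^l\, V_k\!\cdot\!\nabla V_l(x) + O(|z|^3),
\]
which, combined with \textbf{A1}(ii), the integrability $\int_{|z|<1}|z|^2\,\nu(dz)<\infty$, and the finiteness of $\nu(\{|z|\geqslant 1\})$, produces the $L^2$-bounds required for the compensated small-jump integral and the pathwise summability required for the large-jump integral.

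Next I would truncate: let $V_0^{(n)}$ agree with $V_0$ on $\{|x|\leqslant n\}$ and be extended Lipschitz-continuously outside, using \textbf{A1}(i). For the resulting globally Lipschitz coefficients, the classical existence/uniqueness theorem for jump SDEs (Ikeda--Watanabe, Applebaum, Protter) provides a unique c\`adl\`ag adapted global solution $X^{(n)}$. Defining $\tau_n:=\inf\{t\geqslant 0:|X_t^{(n)}|\geqslant n\}$ and exploiting uniqueness on $[0,\tau_n]$ gives the consistency $X^{(n+1)}\equiv X^{(n)}$ on $[0,\tau_n]$, so the $X^{(n)}$ paste together into a unique solution up to the explosion time $\tau_\infty:=\lim_n\tau_n$.

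The last step is to rule out explosion by means of \textbf{A2}. Applying the Marcus-It\^o chain rule to $f(x)=\log(1+|x|^2)$ (chain rule holds for the Marcus form by Kurtz--Pardoux--Protter \cite{KPP}) and taking expectation eliminates the compensated martingale parts; the one-sided growth bound on $\sum_k V_k^2+2x\cdot V_0$ from \textbf{A2}, together with the $O(|z|^2)$ estimate on $F$ near $z=0$ and the finite intensity of large jumps, leads to an inequality of the form $\mathbb{E}\,f(X_{t\wedge\tau_n})\leqslant C\bigl(1+f(x)\bigr)e^{Ct}$ uniformly in $n$. Letting $n\to\infty$ forces $P(\tau_\infty\leqslant T)=0$ for every $T>0$, hence $\tau_\infty=\infty$ almost surely, and the constructed solution is c\`adl\`ag and adapted by construction. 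The most delicate point of the argument is the fine Taylor-type control of the Marcus flow $\phi(z)(x)$ near $z=0$, without which neither Lipschitz continuity nor $L^2$-integrability of the small-jump coefficient would be available.
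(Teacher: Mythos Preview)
Your argument is correct and is precisely the strategy underlying the references the paper invokes: the paper's own proof is a two-line citation of \cite[Theorem~3.1]{Br} (locally Lipschitz coefficients plus one-sided linear growth, handled via truncation and the Lyapunov function $\log(1+|x|^2)$) together with \cite[Lemma~6.10.3]{Ap} (the Gronwall/Taylor control of the Marcus flow $\phi(z)(x)$ for small $|z|$). You have simply unpacked those citations in detail.
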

\begin{proof} 
This follows immediately form \cite[Theorem 3.1]{Br} and \cite[Lemma 6.10.3]{Ap}; see also \cite{Xi}.  
\end{proof}

\renewcommand{\theequation}{\thesection.\arabic{equation}}
\setcounter{equation}{0}

\section{Symplecticity and stochastic Hamilton's principle}
\noindent%%%%%%%%%%%%%%%%%%%%%%%%%%%%%%%%%%%%%%%%%%%%%%%%%%%%%%%%%%%%%%%%%%%
In this section we present several facts about the stochastic Hamiltonian system with L\'{e}vy noise, such as the property of preserving symplectic structure and stochastic Hamilton's priciple, which will help us to better understand such systems from the viewpoint of geometry and physics and further allow us in the next sections to confine our studies to its special structure.

%%%%%%%%%%%%%%%%%%%%%%%%%%%%%%%%%%%%%%%%%%%%%%%%%%%%%%%%%%%%%%%%%%%%%%%%%%%%%%%%%%%%%%%%%%%%%%%%%%%%%%%%%%%%%%%%%%%%%%%

\subsection{Preservation of symplectic structure} \label{Section 3.1}
\noindent
Phase flows of both deterministic Hamiltonian systems and stochastic Hamiltonian systems with Brownian noise are known to preserve symplectic structure \cite{Ar, Bi, Poin}. We next show that stochastic Hamiltonian systems with L\'{e}vy noise in the Marcus sense also have this intrinsic property.\\
\par
Keeping in mind that Marcus integral satisfies the change of variable formula \cite[Section 4]{KPP}, for simplicity, we rewrite systems (\ref{Equation-1}) in their canonical coordinates. That is, with $X=(Q,P)$, $X_0=(q,p)$, $V = (\frac{\partial H}{\partial P},-\frac{\partial H}{\partial Q})$ and $V_k = (\frac{\partial H_k}{\partial P},-\frac{\partial H_k}{\partial Q})$, $k=1,...,d$, canonical stochastic Hamiltonian systems with L\'{e}vy noise are
\begin{equation}\label{Equation-q1}
dQ=\frac{\partial H}{\partial P}(Q,P)dt+\sum_{k=1}^d \frac{\partial H_k}{\partial P}(Q,P) \diamond dL^k (t), ~ Q(t_0)=q,
\end{equation}
\begin{equation}\label{Equation-p1}
dP=-\frac{\partial H}{\partial Q}(Q,P)dt-\sum_{k=1}^d \frac{\partial H_k}{\partial Q}(Q,P) \diamond dL^k (t), ~ P(t_0)=p.
\end{equation}
\par
Note that $dp\wedge dq=\sum_{i=1}^ndp_i \wedge dq_i$ determines a differential two-form.
We are interested in systems (\ref{Equation-q1} - \ref{Equation-p1}) such that the transformation $(p, q) \to (P, Q)$ preserves symplectic structure as follows:
\begin{align}\label{Equation-qp}
dP\wedge dQ &=dp\wedge dq, \notag\\
i.e.,  \;  \sum_{i=1}^ndP_i \wedge dQ_i &=\sum_{i=1}^ndp_i \wedge dq_i.
\end{align}
To avoid confusion, we should note that the differentials in (\ref{Equation-q1})-(\ref{Equation-p1}) and (\ref{Equation-qp}) have different meanings: In (\ref{Equation-q1})-(\ref{Equation-p1}), $P, Q$ are treated as functions of time and $p, q$ are fixed parameters, while, in (\ref{Equation-qp}), the differentiation is made with respect to the initial data $p, q$ . 
\par
Geometrically, (\ref{Equation-qp}) means that the sum of the oriented areas of projections is an integral invariant \cite{Ar, Mi}. Consequently, for such systems, all exterior powers of the two-form are also invariant, and the case of $n$-th exterior power gives the preservation of phase volume.

\begin{theorem}\label{theorem 3.1}  (Symplecticity)
The stochastic Hamiltonian system (\ref{Equation-q1} - \ref{Equation-p1}) preserves symplectic structure.
\end{theorem}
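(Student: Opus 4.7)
The plan is to exploit the split structure of the Marcus integral furnished by (2.3): between jump times, the sample path $(Q(t),P(t))$ satisfies a Stratonovich SDE, while at each jump time $s$ of $L^k$ the path undergoes the instantaneous transition $X_{s-}\mapsto\phi(\Delta L^k(s),V_k,X_{s-})$. The stochastic flow $(p,q)\mapsto(P(t),Q(t))$ is therefore the composition of between-jump Stratonovich flows and the jump maps. Because the pullback of a two-form distributes over composition, it suffices to verify that each constituent preserves $dp\wedge dq$ separately.

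For the continuous pieces I would mimic the Brownian-noise computation of Milstein et al. Stratonovich calculus obeys the ordinary chain rule, so writing the continuous part of (3.1)--(3.2) with effective Hamiltonians $H+\sum_k\gamma^k H_k$ (for $dt$) and $H_k$ (for each $\circ dB^k$), the time differential of the two-form expands as
\begin{equation*}
d_t(dP\wedge dQ)\;=\;d_t(dP)\wedge dQ\,+\,dP\wedge d_t(dQ),
\end{equation*}
where the inner $d$'s are with respect to the initial data $(p,q)$. Substituting Hamilton's equations on each factor, the contribution from a generic $H_\alpha$ ($\alpha\in\{0,1,\ldots,d\}$) reduces to
\begin{equation*}
\bigl(-H_{\alpha,QQ}\,dQ-H_{\alpha,QP}\,dP\bigr)\wedge dQ+dP\wedge\bigl(H_{\alpha,PQ}\,dQ+H_{\alpha,PP}\,dP\bigr),
\end{equation*}
which vanishes identically by the symmetry $H_{\alpha,PQ}=H_{\alpha,QP}$ together with $dP\wedge dP=dQ\wedge dQ=0$. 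Hence $dp\wedge dq$ is preserved along the continuous pieces.

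For a jump of size $z=\Delta L^k(s)$, equation (2.4) identifies $X_s=\phi(z,V_k,X_{s-})$ with the time-$1$ map of the autonomous ODE $\dot\xi=zV_k(\xi)$. Since $V_k=J\nabla H_k$ is Hamiltonian, $zV_k=J\nabla(zH_k)$ is also Hamiltonian (with Hamiltonian function $zH_k$), so its time-$1$ flow is a symplectomorphism by the classical Poincar\'e theorem. Hence every jump preserves $dP\wedge dQ$, and combining with the previous step yields (3.3).

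The main technical obstacle is the accumulation of infinitely many small jumps when $\nu$ has infinite activity: the flow is then not literally a finite composition of the above maps. I would handle this in the standard way by first establishing (3.3) for truncated drivers supported on $\{|z|\geq\varepsilon\}$ (for which jumps form a locally finite set on each bounded time interval, making the finite-composition argument rigorous), and then passing $\varepsilon\to 0$ via continuity of Marcus solutions in the driving noise (Kurtz--Pardoux--Protter Wong--Zakai-type results), thereby recovering (3.3) for the full L\'evy driver.
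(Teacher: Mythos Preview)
Your argument is correct, but it takes a genuinely different route from the paper's own proof.

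The paper does not decompose the Marcus flow into continuous Stratonovich pieces and jump maps. Instead it works directly with the Marcus SDE as a single object and exploits the fact (from Kurtz--Pardoux--Protter) that Marcus calculus obeys the ordinary chain rule. Concretely, it writes $dP\wedge dQ$ in coordinates as a combination of Jacobian minors $\sum_i D(P^i,Q^i)/D(p^r,p^l)$, $\sum_i D(P^i,Q^i)/D(q^r,q^l)$, $\sum_i D(P^i,Q^i)/D(p^r,q^l)$, derives Marcus SDEs for the first variations $P^{ir}_p=\partial P^i/\partial p^r$, $Q^{ir}_p=\partial Q^i/\partial p^r$, etc., and then computes the Marcus differential of each Jacobian minor. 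The Hamiltonian relations $\partial f^i/\partial p^j=-\partial g^j/\partial q^i$, $\partial f^i/\partial q^j=\partial f^j/\partial q^i$, $\partial g^i/\partial p^j=\partial g^j/\partial p^i$ (and the analogous ones for $\sigma_k,\gamma_k$) force the $dt$-coefficient and each $\diamond\,dL^k$-coefficient to vanish identically, so the minors are constant in $t$ and equal to their initial values. No interlacing or truncation enters; the infinite-activity issue is absorbed into the Marcus chain rule itself, used as a black box.

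Your decomposition approach is more geometric and arguably more illuminating: it makes explicit that the jump transitions are time-one maps of Hamiltonian ODEs and hence symplectomorphisms, and that the full flow is a composition of symplectomorphisms. The price is that you must carry out the truncation-and-limit argument yourself to handle infinite activity, whereas the paper sidesteps this entirely by computing at the level of Marcus differentials. Both arguments are sound; the paper's is shorter and more self-contained (given the Marcus chain rule), while yours exposes the underlying geometric mechanism.
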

The proof of this theorem is based on the differential transformation in the sense of Marcus. It is given in the Appendix.

%%%%%%%%%%%%%%%%%%%%%%%%%%%%%%%%%%%%%%%%%%%%%%%%%%%%%%%%%%%%%%%%%%%%%%%%%%%%%%%%%%%%%%%%%%%%%%%%%%%%%%%%%%%%%%%%%%%%%%%%%%%%%%%%%%%%%%%%%%%%%%%%%%%%%%%%%%%%%

\subsection{Stochastic Hamilton's Principle with L\'{e}vy noise} 
\noindent
For conservative mechanical systems, the classicical Hamilton's principle asserts that the dynamics of systems are determined by a variational problem for Lagrangian, and it gives a relationship between the Euler-Lagrange equation and the action integral of the motion \cite{Ar}. For the situation of nonconservative mechanical systems, the form of the action integral and that of the Euler-Lagrange equation must be changed \cite{Tve,Wang}. In this subsection, we would like to propose a stochastic version of Hamilton's principle for a stochastic Hamiltonian system with L\'{e}vy noise by viewing it as a special nonconservative system.\\
\par
We recall some results of nonconservative mechanical systems at first. Let $\mathbf{ F}$ be a nonconservative generalized force. The work done by this nonconservative generalized force is defined as
\begin{equation} \label{Equation-work}
W=-\mathbf{ F}\cdot \mathbf{r},
\end{equation}
where $\mathbf{ r}=\mathbf{ r}(q,t)$ being a position vector. As a nonconservative generalized force is independent of generalized configuration $q$, the variation of $W$ satisfies
$$
\delta W=\mathbf{ F}\cdot\delta \mathbf{ r}=\mathbf{ F}\cdot \frac{\partial\mathbf{ r}}{\partial q}\delta q.
$$
%with $\delta$ being a variation.
\par
Let $L(q,\dot{q},t)$ be a Lagrangian with respect to original conservative Hamiltonian system, and it is connected with Hamiltonian $H$ through the equation
\begin{equation}\label{Legendre}
L=p\cdot \dot{q}-H,
\end{equation}
where $p= \frac{\partial L}{\partial \dot{q}}$ is the Legendre transform. Consider $\gamma=\{ q(t): t_0\leqslant t \leqslant t_1\}$ as a temporally parameterized curve in the configuration space. Under the influence of $\mathbf{ F}$, the action integral of this curve is defined by
\begin{equation}\label{Equation-action integral}
\mathcal{S}[\gamma]=\int_{t_0}^{t_1}(L(\gamma(t),\dot{\gamma}(t),t)-W(\gamma(t)))dt.
\end{equation}
Hamilton's principle of nonconservative mechanical systems asserts that $\delta\mathcal{S}=0$ is equal to the following Euler-Lagrange equation holds:
\begin{equation}\label{Equation-Lagrange}
\frac{d}{dt} \frac{\partial L}{\partial \dot{q}} - \frac{\partial L}{\partial q} = \mathbf{ F}\cdot \frac{\partial\mathbf{ r}}{\partial q}.
\end{equation}
Here the Lagrangian $L$ is considered as a function with independent variables $q$, $\dot{q}$ and $t$.
\par
%It is known to \cite{Tve} that the Euler-Lagrange equations of motion have the property of redundancy. As the value of Lagrangian is invariant to variable transformations, Lagrangian $L$ can be transformed from the variable set $\{ q \}$ to a redundant variable set $\{ Q, P \} $ by
%$$
%L(q,\dot{q},t)=L(q(Q,P,t),\dot{q}(Q,P,\dot{Q},\dot{P},t),t)=L(Q,P,\dot{Q},\dot{P},t).
%$$
% With generalized independent variables $Q$, $P$, $\dot{Q}$, $\dot{P}$ and $t$, the generalized Euler-Lagrange equations of motion can be represented as,
%\begin{eqnarray}
%\frac{d}{dt} \frac{\partial L}{\partial \dot{P}} - \frac{\partial L}{\partial P} &=& \mathbf{ F}\cdot \frac{\partial\mathbf{ r}}{\partial P},  \label{Eqnarray-Lagrange1} \\
%\frac{d}{dt} \frac{\partial L}{\partial \dot{Q}} - \frac{\partial L}{\partial Q} &=& \mathbf{ F}\cdot \frac{\partial\mathbf{ r}}{\partial Q}  \label{Eqnarray-Lagrange2}
%\end{eqnarray}
%with the position vector $\mathbf{ r} = \mathbf{ r}(Q,P,t)$. Based on (\ref{Eqnarray-Lagrange1} - \ref{Eqnarray-Lagrange2}), for a nonconservative system with nonconservative force $\mathbf{ F}$, the corresponding generalized Hamiltonian equations take the following form \cite{Tve}
%\begin{eqnarray}
%\dot{Q}&=&\frac{\partial H}{\partial P}- \frac{\partial \mathbf{ r}}{\partial P}\cdot\mathbf{ F},  \label{Eqnarray-gH1} \\
%\dot{P}&=&-\frac{\partial H}{\partial Q}+ \frac{\partial \mathbf{ r}}{\partial Q}\cdot\mathbf{ F}.  \label{Eqnarray-gH2}
%\end{eqnarray}

It is known to \cite{Tve} that the Euler-Lagrange equations of motion have the property of redundancy. As the value of Lagrangian is invariant to variable transformations, Lagrangian $L$ can be transformed from the variable set $\{ q \}$ to a redundant variable set $\{ Q^{\ast}, P^{\ast} \} $ by
$$
L(q,\dot{q},t)=L(q(Q^{\ast},P^{\ast},t),\dot{q}(Q^{\ast},P^{\ast},\dot{Q^{\ast}},\dot{P^{\ast}},t),t)=L(Q^{\ast},P^{\ast},\dot{Q^{\ast}},\dot{P^{\ast}},t).
$$
 With generalized independent variables $Q^{\ast}$, $P^{\ast}$, $\dot{Q^{\ast}}$, $\dot{P^{\ast}}$ and $t$, the generalized Euler-Lagrange equations of motion can be represented as,
\begin{eqnarray}
\frac{d}{dt} \frac{\partial L}{\partial \dot{P^{\ast}}} - \frac{\partial L}{\partial P^{\ast}} &=& \mathbf{ F}\cdot \frac{\partial\mathbf{ r}}{\partial P^{\ast}},  \label{Eqnarray-Lagrange1} \\
\frac{d}{dt} \frac{\partial L}{\partial \dot{Q^{\ast}}} - \frac{\partial L}{\partial Q^{\ast}} &=& \mathbf{ F}\cdot \frac{\partial\mathbf{ r}}{\partial Q^{\ast}}  \label{Eqnarray-Lagrange2}
\end{eqnarray}
with the position vector $\mathbf{ r} = \mathbf{ r}(Q^{\ast},P^{\ast},t)$. Based on (\ref{Eqnarray-Lagrange1} - \ref{Eqnarray-Lagrange2}), for a nonconservative system with nonconservative force $\mathbf{ F}$, the corresponding generalized Hamiltonian equations take the following form \cite{Tve}
\begin{eqnarray}
\dot{Q^{\ast}}&=&\frac{\partial H}{\partial P^{\ast}}- \frac{\partial \mathbf{ r}}{\partial P^{\ast}}\cdot\mathbf{ F},  \label{Eqnarray-gH1} \\
\dot{P^{\ast}}&=&-\frac{\partial H}{\partial Q^{\ast}}+ \frac{\partial \mathbf{ r}}{\partial Q^{\ast}}\cdot\mathbf{ F}.  \label{Eqnarray-gH2}
\end{eqnarray}
\\
\indent L\'{e}vy noise as a kind of random fluctuating force, can be treated as a special nonconservative force \cite{Ap, Zhu}. We rewrite a stochastic Hamiltonian system with L\'{e}vy noise (\ref{Equation-q1} - \ref{Equation-p1}) in the following form
\begin{eqnarray}
\dot{Q}&=&\frac{\partial H}{\partial P} + \frac{\partial \bar{H}}{\partial P}\diamond\dot{L}(t),  \label{Eqnarray-Q-noise} \\
\dot{P}&=&-\frac{\partial H}{\partial Q} - \frac{\partial \bar{H}}{\partial Q}\diamond\dot{L}(t).  \label{Eqnarray-P-noise}
\end{eqnarray}
where $\bar{H}=(H_1,H_2,...,H_d)$. %  and $\dot{L}(t)=d{L}(t)/dt$ (formally).
It is natural to compare (\ref{Eqnarray-gH1} - \ref{Eqnarray-gH2}) with (\ref{Eqnarray-Q-noise} - \ref{Eqnarray-P-noise}). Formally, the associations between $\mathbf{ F}$ and $\dot{L}(t)$, as well as $ \mathbf{ r}$ and $-\bar{H} $ are reasonable. Under this consideration, we can thus view stochastic Hamiltonian systems with L\'{e}vy noise as a special class of nonconservative system. In other words, stochastic Hamiltonian systems  with L\'{e}vy noise are Hamiltonian systems in certain generalized sense, which are disturbed by certain nonconservative force (i.e., L\'{e}vy noise).

%\begin{remark}\label{remark 3.1} %{\bf Remark 3.2.}
It should be noted that the random fluctuating force here, i.e. L\'{e}vy noise, is different from usual nonconservative forces which dissipate energy of the system. L\'{e}vy noise may also `add' energy to the system. To  illustrate this point, we consider the following linear stochastic oscillator.
%\end{remark}

\begin{example}\label{example 3.1} (Linear stochastic oscillator with L\'{e}vy noise)%{\bf Example 3.3.} (Linear stochastic oscillator with L\'{e}vy noise)
\begin{eqnarray}
dx&=&ydt, ~~~~~~~~~~~~~~ x(t_0)=x_0,\label{Linear SO1}\\
dy&=&-xdt-\sigma dL_t, ~~ y(t_0)=y_0.\label{Linear SO2}
\end{eqnarray}
%\begin{eqnarray}\label{Linear SO1}
%dx=\frac{\partial H}{\partial y}(x,y)dt+\frac{\partial H_1}{\partial y}(x,y) \diamond dL^k (t), ~ x(t_0)=x_0\\
%dy=-\frac{\partial H}{\partial x}(x,y)dt-\frac{\partial H_1}{\partial x}(x,y) \diamond dL^k (t), ~ y(t_0)=y_0
%\end{eqnarray}
which is a stochastic Hamiltonian system with $H(x,y)=\frac{1}{2}(x^2+y^2)$ and $H_1(x,y)=\sigma y$ ($\sigma>0$ is a constant). Rewrite it in 2-dimensional vector form and multiply both sides with the integrating factor
$e^{tJ}$, where $\scriptsize{J=}\begin{bmatrix}\begin{smallmatrix}0 & 1 \\ -1 &  0\end{smallmatrix}\end{bmatrix}$. It's not hard to show that this equation has the unique solution
\begin{eqnarray}
x(t)&=&x(0)\cos t+y(0)\sin t+\int_0^t \sigma\sin(t-s) dL_s, \label{Linear SOsln1}\\
y(t)&=&-x(0)\sin t+y(0)\cos t+\int_0^t \sigma\cos(t-s) dL_s. \label{Linear SOsln2}
\end{eqnarray}
\par
For simplicity, we take the initial conditions $x_0=1$, $y_0=0$ and the drift of L\'evy motion $\gamma=0$. In the sense of L\'evy-It\^o decomposition, solution (\ref{Linear SOsln1} - \ref{Linear SOsln2}) involves a `large jumps' term. By using interlacing \cite[Page 365]{Ap}, it makes sense to begin by omitting this term and concentrate on the study of the corresponding interlacing solution
\begin{eqnarray}\label{Linear mSOsln}
x(t)&=&\cos t+\int_0^t \sigma\sin(t-s) dB_s+\int_{|z|< 1} \sigma z\sin(t-s) \widetilde N(ds,dy),\\
y(t)&=&-\sin t+\int_0^t \sigma\cos(t-s) dB_s+\int_{|z|< 1} \sigma z\cos(t-s) \widetilde N(ds,dy).
\end{eqnarray}
\par
By It\^o isometry and the properties of compensated Poisson integral \cite{Ap}, we can find that the second moment of this solution satisfies
\begin{equation}\label{Linear SO Hamiltonian}
\mathbb{E}(x(t)^2+y(t)^2)=1+\sigma^2t+\sigma^2t\int_{|z|< c}|z|^2\nu(dz),
\end{equation}
where $\int_{|z|< c}|z|^2\nu(dz)<\infty$ by the definition of L\'{e}vy motion.
\par
It means that the Hamiltonian here grows linearly with respect to time $t$. This is quite different from the the case of deterministic Hamiltonian systems, for which the Hamiltonian is preserved for all $t$.
\end{example}

%\begin{remark}\label{remark 3.2}%{\bf Remark 3.4.}
%When a deterministic Hamiltonian system is disturbed by nonconservative forces, many conservative properties of this system will be destroyed. The symplecticity in subsection 3.1 is raise for the whole stochastic system, instead of the original deterministic Hamiltonian system.
%\end{remark}

\begin{remark}\label{remark 3.3}%{\bf Remark 3.4.}
 An alternative view of stochastic Hamilton system is that we can regard it as an open Hamiltonian system within the external world: the stochastic part in (\ref{Equation-1}) characterizes the complicated interaction between the ``deterministic" Hamiltonian system with the Hamiltonian $H_0$ and the chaotic environment \cite{Misa}.
 \end{remark}

For stochastic Hamiltonian system with L\'{e}vy noise (\ref{Eqnarray-Q-noise} - \ref{Eqnarray-P-noise}), according to (\ref{Equation-work}), the work done by L\'evy noise is formally
\begin{equation}
W_{stoch}=-\sum_{k=1}^d H_k\diamond \dot{L}^k(t).
\end{equation}
Based on (\ref{Equation-action integral}), we infer the action integral of motion as follows
\begin{equation}\label{Equation-action integral stoch}
\mathcal{S}_{stoch}[\gamma]=\int_{t_0}^{t_1}(L-W_{stoch})dt=\int_{t_0}^{t_1}L(\gamma(t),\dot{\gamma}(t),t)dt-\sum_{k=1}^d\int_{t_0}^{t_1}H_k(\gamma(t),t)\diamond d{L}^k(t),
\end{equation}
where $\gamma=\{ (Q(t),P(t)): t_0\leqslant t \leqslant t_1\}$.
\par
Moreover, by (\ref{Eqnarray-Lagrange1} - \ref{Eqnarray-Lagrange2}), the Euler-Lagrange equations of motion for the stochastic Hamiltonian system with L\'{e}vy noise (\ref{Eqnarray-Q-noise} - \ref{Eqnarray-P-noise}) have the form
\begin{eqnarray}
\frac{d}{dt} \frac{\partial L}{\partial \dot{P}} - \frac{\partial L}{\partial P} &=& \sum_{k=1}^d\frac{\partial H_k}{\partial P}\diamond \dot{L}^k(t),  \label{Eqnarray-Lagrange1_stoch} \\
\frac{d}{dt} \frac{\partial L}{\partial \dot{Q}} - \frac{\partial L}{\partial Q} &=& \sum_{k=1}^d\frac{\partial H_k}{\partial Q}\diamond \dot{L}^k(t).  \label{Eqnarray-Lagrange2_stoch}
\end{eqnarray}
\par
We call $\mathcal{S}_{stoch}$ the stochastic action integral and call (\ref{Eqnarray-Lagrange1_stoch} - \ref{Eqnarray-Lagrange2_stoch}) the stochastic Euler-Lagrange equations.

\begin{theorem}\label{theorem 3.2}  (Hamilton's Principle)
The paths that are realized by the stochastic dynamical system represented by stochastic Euler-Lagrange equations (\ref{Eqnarray-Lagrange1_stoch} - \ref{Eqnarray-Lagrange2_stoch}) are those for which the stochastic action integral (\ref{Equation-action integral stoch}) is stationary for fixed endpoints $\gamma(t_0)=(Q_0,P_0)$ and $\gamma(t_1)=(Q_1,P_1)$.

\begin{proof}
The action $\mathcal{S}_{stoch}[\gamma]$ is stationary if it does not vary when the curve is slightly changed, $\gamma(t)\to\gamma(t)+\delta\gamma(t)$. The change in the action upon doing this can be formally expanded in $\delta\gamma$,
\begin{equation}
\mathcal{S}_{stoch}[\gamma+\delta\gamma]-\mathcal{S}_{stoch}[\gamma]=\int_{t_0}^{t_1}\frac{\delta\mathcal{S}_{stoch}}{\delta\gamma}\delta\gamma(t)dt+o(\delta\gamma),
\end{equation}
where $\delta\mathcal{S}_{stoch}/\delta\gamma$ is called the Fr\'echet or functional derivative of $\mathcal{S}_{stoch}$.
\par
Applying the chain rule for the Marcus integral, we calculate the derivative,
\begin{align}
\delta \mathcal{S}_{stoch}
=& \int_{t_0}^{t_1}(\frac{\partial L}{\partial Q}\delta Q+\frac{\partial L}{\partial P}\delta P+\frac{\partial L}{\partial \dot{Q}}\delta \dot{Q}+\frac{\partial L}{\partial \dot{P}}\delta \dot{P})dt \notag\\
&-\sum_{k=1}^d  \int_{t_0}^{t_1}(\frac{\partial H_k}{\partial Q}\delta Q+\frac{\partial H_k}{\partial P}\delta P)\diamond \dot{L}^k(t)dt\notag\\
=& \left[ \frac{\partial L}{\partial \dot{Q}} \delta Q\right]_{t_0}^{t_1} + \left[ \frac{\partial L}{\partial \dot{P}} \delta P\right]_{t_0}^{t_1} +
 \int_{t_0}^{t_1}(\frac{\partial L}{\partial Q}-\frac{d}{dt}\frac{\partial L}{\partial \dot{Q}}-\sum_{k=1}^d  \frac{\partial H_k}{\partial Q}\diamond \dot{L}^k(t))\delta Q dt\notag\\
& + \int_{t_0}^{t_1}(\frac{\partial L}{\partial P}-\frac{d}{dt}\frac{\partial L}{\partial \dot{P}}-\sum_{k=1}^d  \frac{\partial H_k}{\partial P}\diamond \dot{L}^k(t))\delta P dt.\notag
\end{align}
The boundary terms vanish because the endpoints of $\gamma(t)$ are fixed: $\delta Q(t_0)=\delta Q(t_1)=\delta P(t_0)=\delta P(t_1)=0$. As discussed in Wang et al \cite{Wang}, the desired result follows.
%Therefore, if $\gamma=(Q(t),P(t))$ is a solution of the stochastic Euler-Lagrange equations (\ref{Eqnarray-Lagrange1_stoch} - \ref{Eqnarray-Lagrange2_stoch})) with boundary values $(Q(t_0),P(t_0))=(Q_0,P_0)$ and $(Q(t_1),P(t_1))=(Q_1,P_1)$, then the action is stationary.
\end{proof}
\end{theorem}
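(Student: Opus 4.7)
The plan is to execute the standard calculus-of-variations argument, leveraging the fact emphasized earlier in the paper that the Marcus integral satisfies the ordinary Newton--Leibniz chain rule. Because of this, the derivation should formally mirror the deterministic nonconservative computation summarized in the generalized Euler--Lagrange equations above, with the generalized force $\mathbf{F}\cdot\partial\mathbf{r}/\partial Q^{\ast}$ (resp.\ $\mathbf{F}\cdot\partial\mathbf{r}/\partial P^{\ast}$) replaced by the Marcus-differential expression $\sum_{k}(\partial H_{k}/\partial Q)\diamond\dot L^{k}(t)$ (resp.\ $\sum_{k}(\partial H_{k}/\partial P)\diamond\dot L^{k}(t)$).

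Concretely, I would fix a variation $\delta\gamma=(\delta Q,\delta P)$ with $\delta Q(t_{0})=\delta Q(t_{1})=\delta P(t_{0})=\delta P(t_{1})=0$ and expand $\mathcal{S}_{stoch}[\gamma+\delta\gamma]-\mathcal{S}_{stoch}[\gamma]$ to first order in $\delta\gamma$. The Lagrangian piece contributes the usual four terms coming from $\partial L/\partial Q$, $\partial L/\partial P$, $\partial L/\partial\dot Q$, $\partial L/\partial\dot P$; the stochastic-work piece, by the Marcus chain rule, expands without any It\^o correction to
\[
\sum_{k=1}^{d}\int_{t_{0}}^{t_{1}}\Bigl(\frac{\partial H_{k}}{\partial Q}\,\delta Q+\frac{\partial H_{k}}{\partial P}\,\delta P\Bigr)\diamond dL^{k}(t).
\]
An integration by parts in time on the $\delta\dot Q$ and $\delta\dot P$ terms then rearranges $\delta\mathcal{S}_{stoch}$ into a sum of boundary contributions $\bigl[(\partial L/\partial\dot Q)\,\delta Q\bigr]_{t_{0}}^{t_{1}}$ and $\bigl[(\partial L/\partial\dot P)\,\delta P\bigr]_{t_{0}}^{t_{1}}$, which vanish by the endpoint condition, plus a single time integral in which $\delta Q$ and $\delta P$ appear as independent free factors.

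The conclusion then follows from the fundamental lemma of the calculus of variations: since $\delta Q$ and $\delta P$ may be chosen arbitrarily on $(t_{0},t_{1})$, requiring $\delta\mathcal{S}_{stoch}=0$ for every admissible $\delta\gamma$ is equivalent to the vanishing of the two integrands, which is precisely the stochastic Euler--Lagrange system stated in the theorem; the converse implication is read off the same identity.

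The main obstacle I anticipate is regularity rather than algebra. The realized path $\gamma$ is c\`adl\`ag with potentially countably many jumps coming from $L_{d}$, whereas the variation $\delta\gamma$ must be smooth enough for the time-integration by parts to make sense. The remedy is to argue pathwise almost surely, apply the fundamental lemma on each jump-free subinterval, and verify that the integration by parts produces no hidden boundary contributions at jump times beyond those already tracked at the endpoints. The Marcus formalism is engineered precisely so that this classical pathwise reasoning remains legitimate, and I therefore do not expect a substantive new analytic difficulty beyond careful book-keeping in the set-up.
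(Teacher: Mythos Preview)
Your proposal is correct and follows essentially the same route as the paper: expand $\delta\mathcal{S}_{stoch}$ using the Marcus chain rule, integrate by parts in time to isolate $\delta Q$ and $\delta P$, discard the boundary terms via the fixed-endpoint condition, and invoke the fundamental lemma of the calculus of variations. Your additional discussion of the c\`adl\`ag regularity issue goes beyond what the paper's proof actually does (it simply defers to Wang et al.\ at that point), so you may safely omit or abbreviate that part.
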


%\begin{remark}\label{remark 3.4}
%Note that the above stochastic Euler-Lagrange equations (\ref{Eqnarray-Lagrange1_stoch} - \ref{Eqnarray-Lagrange2_stoch}) with boundary values is a system of second-order SDEs with L\'{e}vy noise. It is not obvious, nor even necessarily true, that there exists a solution for every pair $((Q_0,P_0),(Q_1,P_1))$ and every time interval $[t_0,t_1]$. Therefore, it is not necessarily true that every stationary point satisfies the ODEs. Nevertheless, in both of conservative and nonconservative case, if the Lagrangian satisfies the Legendre condition, a minimizing trajectory exists and that it satisfies the Euler-Lagrange equations  \cite{Me, Tve, Zhu}. What's more, with Legendre condition, the stochastic Lagrangian and Hamiltonian formalism are equivalent (Two simple cases are shown below).
%\end{remark}

\begin{example}\label{example 3.2}  %\noindent{\bf Example 3.6.}
Consider the linear stochastic oscillators with L\'{e}vy noise (\ref{Linear SO1} - \ref{Linear SO2}). We show that the equations (\ref{Linear SO1} - \ref{Linear SO2}) are equivalent to the stochastic Euler-Lagrange equations of motion with L\'{e}vy noise (\ref{Eqnarray-Lagrange1_stoch} - \ref{Eqnarray-Lagrange2_stoch}).
Indeed, by the relation between Lagrangian and Hamiltonian, we have
$$
L(x,y,\dot{x},\dot{y})=x\cdot \dot{y}-H(x,y)=x\cdot \dot{y}-\frac{1}{2}(y^2+x^2).
$$
According to (\ref{Eqnarray-Lagrange1_stoch} - \ref{Eqnarray-Lagrange2_stoch}), the Euler-Lagrange equations of motion of the linear stochastic oscillators have the form
\begin{equation}\label{Equation-SOL}
\left\{
\begin{array}{rl}
&\frac{d}{dt} \frac{\partial L}{\partial {y}} - \frac{\partial L}{\partial x} = -\sigma\dot{L}_t,  \\
&\frac{d}{dt} \frac{\partial L}{\partial \dot{y}} - \frac{\partial L}{\partial y} =  0.
\end{array}
\right.
\end{equation}
since $H_1=\sigma x$. With initial conditions $x(0)=x_0$, $y(0)=y_0$, (\ref{Equation-SOL}) are equivalent to the Hamiltonian equations of motion (\ref{Linear SO1} - \ref{Linear SO2}).
%\par
%(2) Similarly, it is not hard to check this for stochastic Kubo Oscillator:
%\begin{equation}\label{Equation-QP0}
%\left\{
%\begin{array}{rl}
%& dx= \alpha ydt+\beta y \diamond d{L}_t,  \\
%&dx= -\alpha xdt-\beta x \diamond d{L}_t,
%\end{array}
%\right.
%\end{equation}
%with $H=\frac{\alpha}{2}(y^2+x^2)$, $H_1=\frac{\beta}{2}(y^2+x^2)$, $\alpha, \beta>0$ and whose Lagrangian is $L(x,y,\dot{x},\dot{y})=\dot{x}y-\frac{\alpha}{2}(y^2+x^2)$.
\end{example}

Consider the stochastic action integral $\mathcal{S} $ in (\ref{Equation-action integral stoch}) as a function of the two endpoints $(Q(t_0),\dot{Q}(t_0))=(Q_0,\dot{Q}_0)$ and $(Q(t_1),\dot{Q}(t_1))=(Q_1,\dot{Q}_1)$. We have the following theorem which plays an important role in constructing some numerical methods \cite{Mi2, Mi,  Wang, Fox}.

\begin{theorem}\label{theorem 3.3} (Characterization of stochastic action integral) %{\bf Theorem 3.7.}
The stochastic action integral $\mathcal{S}_{stoch} $ satisfies
\begin{equation}\label{Equation-Sp0p1}
d \mathcal{S}_{stoch}=-P_0^TdQ_0+P_1^TdQ_1.
\end{equation}
Furthermore, if the Lagrangian $L$ and the functions $H_k$ $(k = 1,...,d)$ are sufficiently smooth with respect to $P$ and $Q$,
then the mapping $$(P_0,Q_0) \mapsto (P_1,Q_1)$$ defined by equation (\ref{Equation-Sp0p1}) is symplectic.
\end{theorem}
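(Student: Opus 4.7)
The plan is to derive the differential identity (\ref{Equation-Sp0p1}) by rerunning the variational computation from the proof of Theorem \ref{theorem 3.2}, but this time allowing the endpoints of the curve to vary instead of fixing them. The symplecticity of the endpoint-to-endpoint map will then follow immediately by applying the exterior derivative and invoking $d^2=0$. This is the natural stochastic analogue of the classical fact that the action, viewed as a function of its endpoints, is a generating function of the first kind for the Hamiltonian flow.

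First I would revisit the expression for $\delta\mathcal{S}_{stoch}$ obtained in the proof of Theorem \ref{theorem 3.2}. The crucial observation is that the curve $\gamma$ over which $\mathcal{S}_{stoch}$ is computed is, by definition, a path realized by the system, i.e.\ a solution of the stochastic Euler-Lagrange equations (\ref{Eqnarray-Lagrange1_stoch}--\ref{Eqnarray-Lagrange2_stoch}). Therefore the two bulk integrands in the previous $\delta\mathcal{S}_{stoch}$ formula vanish identically along $\gamma$, leaving only the boundary contributions
\begin{equation*}
\delta\mathcal{S}_{stoch} = \left[\frac{\partial L}{\partial \dot{Q}}\,\delta Q\right]_{t_0}^{t_1} + \left[\frac{\partial L}{\partial \dot{P}}\,\delta P\right]_{t_0}^{t_1}.
\end{equation*}
Combining this with the Legendre relation (\ref{Legendre}), which in the present variables reads $L = P^{T}\dot{Q}-H$ and hence gives $\partial L/\partial \dot{Q}=P$ and $\partial L/\partial \dot{P}=0$, I obtain $\delta\mathcal{S}_{stoch} = P_1^{T}\delta Q_1 - P_0^{T}\delta Q_0$. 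Since the endpoint variations are arbitrary, this is precisely the claimed formula (\ref{Equation-Sp0p1}).

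For the second assertion, I would apply the exterior derivative $d$ to both sides of (\ref{Equation-Sp0p1}) regarded as a $1$-form on the space of initial data. Because $d^2\mathcal{S}_{stoch}=0$, this yields
\begin{equation*}
0 \,=\, d\bigl(-P_0^{T}dQ_0 + P_1^{T}dQ_1\bigr) \,=\, -dP_0\wedge dQ_0 + dP_1\wedge dQ_1,
\end{equation*}
i.e.\ $\sum_i dP_1^{i}\wedge dQ_1^{i} = \sum_i dP_0^{i}\wedge dQ_0^{i}$, which is the symplecticity of the mapping $(P_0,Q_0)\mapsto(P_1,Q_1)$. Note that this also gives an alternative derivation of Theorem \ref{theorem 3.1} independent of the canonical-coordinate computation in the Appendix.

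The main obstacle I anticipate is making the stochastic variation calculus rigorous inside the Marcus framework --- specifically, justifying that the differentiation with respect to endpoint data commutes with the Marcus integrals against $dL^{k}$ without generating extra It\^o-type correction terms, and that the resulting $1$-form on initial data is smooth enough for $d^2=0$ to be applied. This is exactly where the stated hypothesis of sufficient smoothness of $L$ and of each $H_k$ in $P$ and $Q$ is needed: by the Newton-Leibniz chain rule enjoyed by Marcus integrals \cite{KPP}, the relevant $\diamond dL^{k}$ integrals transform as ordinary integrals under smooth changes of variable, so the termwise variation and the subsequent exterior differentiation go through as in the classical Hamilton-Jacobi generating-function argument.
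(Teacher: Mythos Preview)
Your proposal is correct and follows essentially the same route as the paper. For the identity $d\mathcal{S}_{stoch}=-P_0^T dQ_0+P_1^T dQ_1$, the paper computes $\partial\mathcal{S}/\partial Q_0$ and $\partial\mathcal{S}/\partial Q_1$ directly by differentiating under the integral, integrating by parts, and invoking the stochastic Euler--Lagrange equations together with the Legendre relation $P=\partial L/\partial\dot{Q}$; this is exactly your reuse of the boundary terms from the variation in Theorem~\ref{theorem 3.2}, and your observation $\partial L/\partial\dot{P}=0$ is implicitly used there as well. For symplecticity, the paper writes out $dP_1\wedge dQ_1$ and $dP_0\wedge dQ_0$ in terms of the mixed second partials $\partial^2\mathcal{S}/\partial Q_1\partial Q_0$ and $\partial^2\mathcal{S}/\partial Q_0\partial Q_1$ and appeals to their equality under the smoothness hypothesis, which is precisely the coordinate expression of your $d^2\mathcal{S}_{stoch}=0$ argument.
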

The proof is given in the Appendix.

%\begin{remark}\label{remark 3.3}
%The Theorem \ref{theorem 3.1}, \ref{theorem 3.2} and \ref{theorem 3.3} show that it is reasonable to consider such SDEs driven by with L\'{e}vy noise with Hamiltonian structure. What's more,
%the problem of preserving phase-volume or preserving symplectic structure are considered widely in numerical modeling and practical application; see Hairer et al. \cite{Hair}, Milstein et al. \cite{Mi,Mi2}, Fox-de la Llave \cite{Fox}. And a systematic approach of producing symplectic numerical methods based on Hamilton's principle of stochastic Hamiltonian systems with Brownian noise has been studied in Wang et al. \cite{Wang}. These are also the reasons of studying the symplecticity and Hamilton's principle of stochastic Hamiltonian systems with L\'{e}vy noise.\mathcal{E}
%\end{remark}

%%%%%%%%%%%%%%%%%%%%%%%%%%%%%%%%%%%%%%%%%%%%%%%%%%%%%%%%%%%%%%%%%%%%%%%%%%%%%%%%%%%%%%%%%%%%%%%%%%%%%%%%%%%%%%%%%%%%%%%%%%%%%

\renewcommand{\theequation}{\thesection.\arabic{equation}}
\setcounter{equation}{0}

\section{An averaging principle for integrable stochastic Hamiltonian systems}
\noindent
We now return to  the stochastic Hamiltonian systems with L\'{e}vy noise (\ref{Equation-1}) on a $2n$-dimensional smooth manifold $M$ (for simplicity, set $n=d$ in the rest of this discussion). 
%As mentioned above, this stochastic Hamiltonian system is itself a nonconservative system, and the perturbation of nonconservative force (L\'{e}vy noise) is interior for this stochastic system as the noise would add energy to the system and  it make sense to preserve symplectic structure and develop the stochastic Hamilton's principle. 
As mentioned earlier, such systems are themselves nonconservative systems with the perturbation of L\'{e}vy noise.
Then a interesting question to raise is:  if there is even a  small external perturbation in this stochastic system, just as the deterministic Hamiltonian case and the stochastic Hamiltonian case with Brownian noise refering to the study of Freidlin-Wentzell \cite{Fr}, Li \cite{Li} and so on, what the effective dymanic behaviour would be? To answer this question, we consider the (completely) integrable stochastic Hamiltonian systems with L\'{e}vy noise.\\

%\subsection{Integrable stochastic Hamiltonian system with L\'{e}vy noise}
Recall that on a $2d$-dimensional smooth manifold, a family of $d$ smooth Hamiltonians $\{H_k\}_{k=1}^{d}$ is said to form a (completely) integrable system if they are pointwise Poisson commuting and if the corresponding Hamiltonian vector fields $V_{k}$ are linearly independent at almost all points. 
\par
We call systems (\ref{Equation-1}) \emph{ (completely) integrable stochastic Hamiltonian systems with L\'{e}vy noise}, if they satisfy the following condition:
 \emph{
\begin{itemize}
\item[\bf{ A3}] Completely integrability: $\{H_k\}_{k=1}^{d}$ is an integrable family, and Hamiltonian vector field $V_0$ with Hamiltonian $H_0$ is commuting with the family of vector fields $V_{k}$. That is, $dH_j(V_{i})=\omega^2(V_{i}, V_{j}) = 0$ for $i,j=0,1,2,...,d$.
\end{itemize}
}
\par
For the sake of convenience and readability, in the sense of of L\'evy-It\^o decomposition and Marcus integral (\ref{Marcus}), we consider the following integrable stochastic Hamiltonian system with L\'{e}vy noise, which satisfies assumptions {\bf A1} - {\bf A3},
\begin{equation}\label{Equation-H}
dX_t=V_0(X_t)dt+\sum_{k=1}^d V_{k}(X_t)\circ dB^k(t)+\sum_{k=1}^d V_{k}(X_t)\diamond dL^{k}(t),~ X(t_0)=x\in M.
\end{equation}
Where $B(t)$ is a $d$-dimensional independent standard Brownian motion, $L(t)$ is a $d$-dimensional independent L\'{e}vy motion with the generating triplet $(0, 0, \nu)$ which is a pure jump process.

\subsection{Invariant manifolds and invariant measure for integrable stochastic Hamiltonian systems}
\noindent
Due to the system has $d$ first integrals $H_1, . . . ,H_d$ in involution. We consider the joint integral level
\begin{equation}\label{Equation-levelset}
M_h =\{x \in M: H_i(x) = h_i= const, ~i=1,2,..d\}.
\end{equation}
The Liouville-Arnold theorem \cite{Ar} indicates that if the functions $H_i$ on $M_h$ are independent, then each compact connected component of $M_h$ is diffeomorphic to a $d$-dimensional torus $\mathbb{T}^d$.
It remains to use the geometric fact: in this integrable system there are convenient, so-called, action-angle coordinates $(I,\theta)$ ($I$ are the actions and $\theta$ are the angles) such that
$\omega^2 = dI \wedge d\theta$ (symplecticity), $H = H(I)$ (i.e., $I$ are first integrals).
\par
We next show that a solution to these SDEs preserves the energies $H_i$ and there are corresponding invariant manifolds (level sets). Let $\Psi_t:=(\Psi(t,\omega,x),t\geqslant 0)$ be the solution flow of the SDE (\ref{Equation-H}) with starting point $x$ and $(T_t, t \geqslant 0)$ be the semigroup associated with $\Psi_t$. Applying the chain rule for the Stratonovith itegral and Marcus integral, and using the assumption {\bf A3} of completely integrability, we have
\begin{lemma}\label{lemma 4.1}%\noindent{\bf Lemma 4.2.}
The solution flow $\Psi_t:=(\Psi(t),t\geqslant 0)$ of SDE (\ref{Equation-H}) preserves the invariant manifolds $M_h$, i.e. for $1\leqslant i \leqslant d$,
\begin{equation}
dH_i(X_t)=dH_i(V_0(X_t))dt+\sum_{k=1}^d dH_i(V_{k}(X_t)\circ dB^k(t)+\sum_{k=1}^d dH_i(V_{k}(X_t))\diamond dL^{k}(t)=0. \notag
\end{equation}
\end{lemma}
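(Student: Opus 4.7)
My plan is to treat this as a direct application of the chain rule, exploiting the fact (emphasized in Section 2) that the Marcus integral obeys the ordinary Newton--Leibniz chain rule, just as the Stratonovich integral does. Concretely, I would apply this chain rule to the smooth function $H_i$ composed with the solution process $X_t$ of \eqref{Equation-H}, which yields precisely the identity displayed in the lemma statement. The subtle content of the Marcus chain rule is already in place: between jumps the process obeys a Stratonovich SDE, and the Marcus correction at a jump time is defined so that $H_i$ is transported along the integral curves of $V_k$, under which the value of $H_i$ is preserved by the involution assumption. So once the chain rule is invoked, the problem reduces to checking that every coefficient vanishes.

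The second step is to read off the coefficients. For each $i \in \{1,\dots,d\}$ and each $k \in \{0,1,\dots,d\}$, the coefficient of $dt$, $\circ dB^k$, and $\diamond dL^k$ is of the form $dH_i(V_k)(X_t)$. By the definition of the Hamiltonian vector field, $dH_i(V_k) = \omega^2(V_k, V_i)$. Assumption \textbf{A3} asserts $\omega^2(V_i, V_j) = 0$ for all $i,j \in \{0,1,\dots,d\}$, i.e.\ the Hamiltonians $\{H_k\}_{k=0}^d$ are in pairwise involution. Hence each coefficient is identically zero, and therefore $dH_i(X_t) = 0$ for all $t \geqslant 0$.

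From $dH_i(X_t) = 0$ and the initial condition $X_0 = x$ I would conclude that $H_i(X_t) = H_i(x)$ almost surely for all $t \geqslant 0$ and all $i = 1,\dots,d$. Hence if $x \in M_h = \{y \in M : H_i(y) = h_i,\ i = 1,\dots,d\}$, then $\Psi_t(x) \in M_h$ for all $t \geqslant 0$ with probability one, proving that each level set $M_h$ is invariant under the solution flow.

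I do not anticipate a genuine obstacle here: the only nontrivial ingredient is the Marcus chain rule itself, which is precisely what distinguishes the Marcus formalism from It\^o's and was the motivation for adopting it in \eqref{Equation-1}. Care is needed, however, to justify that the ordinary chain rule applies across jumps of $L^k$; this relies on the fact that the auxiliary flow $\phi$ in \eqref{Equation-phi}, which defines the Marcus correction, is the integral curve of $V_k$, and $H_i$ is constant along this integral curve because $V_k H_i = dH_i(V_k) = 0$. This remark confirms that the formal manipulation above is legitimate at jump times as well as in the continuous regime, completing the argument.
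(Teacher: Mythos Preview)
Your proposal is correct and follows exactly the approach the paper indicates: the paper simply prefaces the lemma with the sentence ``Applying the chain rule for the Stratonovich integral and Marcus integral, and using the assumption \textbf{A3} of complete integrability, we have'' and states the identity without further elaboration. Your write-up is in fact more careful than the paper's, since you spell out why the Marcus correction at jump times preserves $H_i$ (namely, the auxiliary flow $\phi$ in \eqref{Equation-phi} moves along integral curves of $V_k$, and $H_i$ is constant along these by involution).
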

Indeed, for each $x$ in $M$, we have $h = (H_1(x), . . . , H_n(x))$, thus it determines an invariant manifold, which we write also as $M_{H (x)}$. Note that the $d$ vector fields $\{V_k\}_{k=1}^{d}$ are tangent to $M_{H (x)}$ and the symplectic form $\omega^2$ vanishes on the invariant manifolds $M_h$. The Markovian solution to SDEs (\ref{Equation-H}) restricts to each invariant manifold and the generator $\mathcal{A}$ of restriction is the sum of a second-order elliptic differential operator and a (compensated) integral of difference operator, i.e.,
\begin{align}\label{Eqnarray-generator}
 (\mathcal{A}f)(x)=&(\mathcal{L}_0f)(x)+\frac{1}{2}\sum_{k=1}^d(\mathcal{L}_k\mathcal{L}_k f)(x) \notag\\
 &+ \int_{{\mathbb{R}^d} \setminus \{ 0 \}} [f(\phi(z)x)-f(x)-\sum_{k=1}^d z^k(\mathcal{L}_k f)(x)\mathbf{ 1}_{\{|z|<1\}}(z)]\nu(dz)
\end{align}
for every function $f\in C_b^2(M)$. Here we denote as $\mathcal{L}_0$, $\mathcal{L}_k$ the Lie differentiation in the direction of $V_0$,$V_k$, respectively, and $C_b^2(M)$ the collection of all bounded Borel measurable $C^2$ functions on $M$. More precisely, we have $\mathcal{L}f=df(V_0 ) = \omega^2(v_{f},V_0)$ and $\mathcal{L}_kf=df(V_k ) = \omega^2(v_{f},V_k)$.\\

We remark that an invariant probability measure for (\ref{Equation-H}) is by definition a Borel probability measure on $M$ such that
\begin{equation}
\int_{M}(T_t g)(x)\mu(dx)=\int_{M}g(x)\mu(dx) \notag
\end{equation}
for all $t>0$, $g\in C^1(M)$. Based on the celebrated Krylov-Bogoliubov method, we have the following lemma.%Albeverio et al. \cite{Br} shows that
\begin{lemma}\label{lemma 4.2} (\cite[Theorem 4.5]{Br})
If $M$ is locally compact in the relative topology and assumptions {\bf A1} and {\bf A2} hold, then the system (\ref{Equation-H}) has at least one invariant measure.
\end{lemma}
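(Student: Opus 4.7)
The plan is to apply the Krylov--Bogoliubov averaging procedure to produce an invariant probability measure as a weak subsequential limit of time-averaged transition probabilities. Fix an arbitrary initial point $x_0 \in M$, let $P_t(x_0, \cdot) := \mathbb{P}(\Psi(t, \cdot, x_0) \in \cdot)$ denote the transition kernel of the solution to (\ref{Equation-H}), and define the Krylov--Bogoliubov measures
\begin{equation}
\mu_T(A) := \frac{1}{T}\int_0^T P_t(x_0, A)\, dt, \qquad T > 0,
\end{equation}
which are Borel probability measures on $M$. The goal is to extract a weakly convergent subsequence and verify that the limit is invariant under the semigroup $(T_t)$ associated with $\Psi_t$.

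The first key step is tightness of $\{\mu_T\}_{T \geq 1}$. Here I would apply the It\^o formula to $|X_t|^2$ along the form (\ref{Eqnarray-3}), using the one-sided linear growth assumption \textbf{A2} together with standard estimates on the compensated small-jump integral (cf.\ Applebaum, Lemma 6.10.3) and a Gronwall argument, to obtain a uniform-in-time moment bound of the form
\begin{equation}
\sup_{t \geq 0} \mathbb{E}|X_t|^2 \leq C(1 + |x_0|^2).
\end{equation}
Chebyshev's inequality then yields $\mu_T(\{y : |y| > R\}) \leq C(1+|x_0|^2)/R^2$ uniformly in $T$. Because $M$ is locally compact in the relative topology, the sublevel sets $\{|y| \leq R\} \cap M$ can be exhausted by compact sets, so the family $\{\mu_T\}$ is tight and Prokhorov's theorem delivers a sequence $T_n \uparrow \infty$ with $\mu_{T_n} \Rightarrow \mu$ for some Borel probability $\mu$ on $M$.

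The second step is to pass from weak convergence to invariance. Assumption \textbf{A1} (together with the existence and uniqueness theorem already established) gives continuous dependence of solutions on initial data in the $L^2$ sense on each compact time interval, hence the Feller property $T_t(C_b(M)) \subset C_b(M)$. For every $f \in C_b(M)$ and $s > 0$ one has the telescoping identity
\begin{equation}
\int_M (T_s f - f)\, d\mu_T \;=\; \frac{1}{T}\int_T^{T+s} (T_t f)(x_0)\, dt \;-\; \frac{1}{T}\int_0^s (T_t f)(x_0)\, dt,
\end{equation}
whose right-hand side is $O(\|f\|_\infty s / T)$. Taking $T = T_n \to \infty$ and invoking the Feller continuity of $T_s f$ under the weak convergence $\mu_{T_n} \Rightarrow \mu$ yields $\int_M T_s f \, d\mu = \int_M f \, d\mu$ for all $f \in C_b(M)$ and $s \geq 0$, which is precisely the invariance condition in the definition.

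The main obstacle I expect is the tightness bound rather than the Feller-limit argument. Assumption \textbf{A2} controls only the second moment, and the jump contribution can be handled this way only if $\int_{|z|\geq 1}|z|^2\,\nu(dz)$ is finite (or at least sufficiently integrable after interlacing). If the L\'evy measure has heavy tails this direct Lyapunov computation fails, and one must either impose a stronger integrability hypothesis on $\nu$, work with a truncated-jump approximation and pass to the limit, or replace $|x|^2$ with a concave modification such as $\log(1+|x|^2)$ adapted to the manifold geometry. A secondary subtlety is that ``locally compact in the relative topology'' must in practice be strong enough to guarantee that the sublevel sets appearing in tightness are relatively compact in $M$; this is implicit in the hypothesis of the cited result.
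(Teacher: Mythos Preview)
The paper does not supply a proof of this lemma at all: it merely attributes the result to \cite[Theorem~4.5]{Br} and remarks that it rests on ``the celebrated Krylov--Bogoliubov method.'' Your proposal is therefore not in competition with any argument in the paper; you are sketching precisely the method the authors invoke by name and then outsource.

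That said, your sketch has a real gap in the tightness step. From the one-sided linear growth condition \textbf{A2} the It\^o/Gronwall computation you describe yields only
\[
\mathbb{E}|X_t|^2 \;\le\; C\bigl(1+|x_0|^2\bigr)e^{Ct},
\]
not a bound uniform in $t$; a linear growth hypothesis is not a dissipativity hypothesis. With exponential growth of the second moment, the Ces\`aro averages $\frac{1}{T}\int_0^T \mathbb{E}|X_t|^2\,dt$ are not bounded either, so your Chebyshev argument for tightness of $\{\mu_T\}$ does not go through. The result in \cite{Br} that the paper cites uses a genuine Lyapunov condition (a function $V$ with $\mathcal{A}V \le C - \kappa V$ outside a compact set, or an equivalent recurrence criterion), which is what actually forces tightness; \textbf{A1}--\textbf{A2} alone, as written here, guarantee global existence but not the moment control you claim. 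If you want to repair the argument without strengthening the hypotheses, the cleanest route in this paper's setting is to note that under the later assumption \textbf{A4} the dynamics live on compact level sets $M_h$, where tightness is automatic and Krylov--Bogoliubov applies directly; but that is not the lemma as stated.
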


For simplicity, throughout this paper, we assume that:
\emph{
\begin{itemize}
\item[\bf{A4}] The invariant manifolds are compact, the map $H: x \in M \to (H_1(x), . . . , H_d(x)) \in \mathbb{R}^d$ is proper, and its set of critical points has measure zero.
\end{itemize}
}
Under our assumption, for almost every point $h_0$ in $\mathbb{R}^d$, there is a neighbourhood $N$ of $h_0$ such that $H^{-1}(h)$ is a smooth sub-manifold for all $h\in N$ and that there is a diffeomorphism from
$H^{-1}(N)$ to $N\times H^{-1}(h_0)$. We call such $h_0$ a regular value of $H$, and call the point $y$ in $M$ a critical point if $H(y)$ is not regular. By Morse-Sard theorem \cite{Ar2}, the set of critical values of the function $H$ has measure zero.
\par
Recall that in a neighbourhood of a regular point $h_0$ of $H$, every component of the level set $M_{h_0}$ is diffeomorphic to a $d$-dimensional torus $\mathbb{T}^d$, and a small neighbourhood $U_0$ of $M_{h_0}$ is diffeomorphic to the product space $\mathbb{T}^d\times D$, where $D$ is a relatively compact open set in $\mathbb{R}^d$. Take an action-angle chart around $M_{h}$. The measure $(\sum_i dI^i\wedge d\theta^i)^d$ on the product space naturally splits to give us a probability measure,
the Haar measure \cite{Ar2} $\theta_1 \wedge ... \wedge \theta^d$ on $\mathbb{T}^d$. We take the corresponding one on $M_{h}$ and denote it by $\mu_h$, just like the case of Brownian in \cite{Li}. With the help of action-angle transformation and the above assumptions, we thus have the following lemma.

\begin{lemma}\label{lemma 4.2}
Assume that assumptions {\bf A1} - {\bf A4} are in force. Let $E=span\{ V_1,...,V_d\}$ be a sub-bundle of the tangent bundle of rank $d$. Let $U$ be a section of $E$ commuting with all $V_i$ $(1\leqslant i \leqslant d)$. The invariant measure for stochastic Hamiltonian system (\ref{Equation-H}) restricted to the invariant manifold $M_{h}$ is $\mu_h$, which varies smoothly with $h$ in sufficiently small neighbourhoods of a regular value.
\end{lemma}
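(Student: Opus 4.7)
The plan is to pass to action--angle coordinates and reduce the restricted dynamics on $M_h$ to a translation-invariant L\'evy-type SDE on the torus. By the Liouville--Arnold theorem, on a neighborhood $U_0 \simeq \mathbb{T}^d \times D$ of a regular invariant manifold $M_{h_0}$ there exists a symplectic chart $(I,\theta)$ in which $\omega^2 = \sum_i dI_i \wedge d\theta_i$ and, by the complete integrability hypothesis \textbf{A3} together with the commuting assumption on $V_0$, every $H_k$ $(k=0,1,\dots,d)$ depends only on $I$. Consequently each Hamiltonian vector field takes the form $V_k = \sum_j \partial_{I_j}H_k(I)\,\partial_{\theta_j}$, and its restriction to the leaf $M_h = \{I = h\}$ is the constant vector field $a_k(h) := \nabla_I H_k(h)$ on $\mathbb{T}^d$.

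Because each $V_k|_{M_h}$ is constant, the auxiliary ODE (\ref{Equation-phi}) integrates explicitly to $\phi(l,V_k,\theta) = \theta + l\,a_k(h)$, so the bracketed correction in (\ref{Marcus}) vanishes identically and the Marcus integral collapses onto an ordinary L\'evy stochastic integral. The dynamics restricted to $M_h$ thus reduces to the constant-coefficient SDE
\begin{equation*}
d\theta_t = a_0(h)\,dt + \sum_{k=1}^{d} a_k(h)\,dB^k(t) + \sum_{k=1}^{d} a_k(h)\,dL^k(t)
\end{equation*}
on $\mathbb{T}^d$, whose generator, specialised from (\ref{Eqnarray-generator}), is
\begin{equation*}
(\mathcal{A}f)(\theta) = a_0 \cdot \nabla f(\theta) + \tfrac{1}{2}\sum_{k=1}^{d}\bigl(a_k\cdot\nabla\bigr)^2 f(\theta) + \int_{\mathbb{R}^d\setminus\{0\}} \Bigl[f\bigl(\theta + \textstyle\sum_{k}z^k a_k\bigr) - f(\theta) - \sum_{k}z^k a_k\cdot\nabla f(\theta)\,\mathbf{1}_{\{|z|<1\}}\Bigr]\nu(dz),
\end{equation*}
with all $a_k=a_k(h)$ independent of $\theta$.

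The central step is to verify $\int_{\mathbb{T}^d} (\mathcal{A}f)\,d\mu_h = 0$ for every $f \in C_b^2(\mathbb{T}^d)$; by the standard duality characterisation this gives $T_t^{\ast}\mu_h = \mu_h$. Each summand vanishes by translation-invariance of Haar measure: the drift and second-order terms integrate to zero by the divergence theorem on the compact boundary-less torus; for the jump term, Fubini (valid because $\int_{|z|<1}|z|^2\nu(dz)<\infty$ and $f\in C_b^2$) together with the change of variables $\theta \mapsto \theta + \sum_k z^k a_k$ annihilates both the nonlocal difference and its linear compensator. Pulling $\mu_h$ back to $M_h$ through the smooth action--angle diffeomorphism yields the stated invariance. (This is essentially the statement that Haar measure is invariant for any L\'evy process on the compact group $\mathbb{T}^d$.)

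Smoothness of $h \mapsto \mu_h$ is then automatic, since the action--angle chart depends smoothly on $h$ throughout a neighborhood of a regular value (implicit function theorem applied to the proper submersion $H$), so the pushforward of the fixed Haar measure on $\mathbb{T}^d$ inherits the same regularity. The main subtlety I expect is the Marcus jump term: the collapse ``constant vector field $\Rightarrow$ trivial Marcus correction'' is only legitimate after restricting to the torus, since off $M_h$ the fields $V_k$ depend nontrivially on $\theta$, so one must justify the restriction procedure before passing to the translation-invariant generator, and one must check that the compensator truncation $\mathbf{1}_{\{|z|<1\}}$ does not obstruct the Fubini/change-of-variables step that produces the cancellation.
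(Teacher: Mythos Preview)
Your argument is correct and follows essentially the same route as the paper: pass to action--angle coordinates, observe that on each leaf $M_h\cong\mathbb{T}^d$ the Hamiltonian vector fields become constant (functions of $I$ only), and conclude that the Haar measure $\mu_h$ is invariant because every term in the generator integrates to zero against it. The paper's version is terser---it only writes out the vanishing of $\int_{M_h} df(V_i)\,d\mu_h$ and then appeals to the action--angle picture---whereas you verify the drift, diffusion, and jump pieces of $\mathcal{A}$ separately and make the Marcus-to-It\^o collapse on the leaf explicit; this extra care is a genuine improvement but not a different strategy.
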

\begin{proof}
Recall that $M_h$ have the form in (\ref{Equation-levelset}), we rewrite $U=\sum_{i=1}^{d}h_iV_i(x)$. For any smooth function $f$ on $M_h$, we have
\begin{align}
\int_{M_h}df(V_i)(x)\mu_h(dx)%=\int_{M_h} \{ H_i, f\} (x)\mu_h(dx)
&=\int_{\mathbb{T}^d}d(f \circ \varphi)\Big( -\sum_{k=1}^d \frac{\partial (H_k\circ \varphi)}{\partial I_k} \frac{\partial}{\partial\theta_k}\Big)d\theta \notag\\
&=-\sum_{k=1}^d \omega_k^i(I)\int_{\mathbb{T}^d}\Big( \frac{\partial}{\partial\theta_k}(f \circ \varphi)\Big)d\theta =0,\notag
\end{align}
where $\varphi^{-1}$ is the action-angle coordinate map (see the next subsection for detail), $(I,\theta)$ are the corresponding action-angle coordinates. Thus $U$ is divergence free, i.e. ${\rm div}_E U=0$, in the sense of
\begin{align}
\int_{M_h}df(U)(x)\mu_h(dx)=-\int_{M_h} {\rm div}_E U(x)\mu_h(dx)=0.%, ~ {\rm for \ any \ smooth \ function \ } f.
\end{align}
Therefore, restricted to the torus, the invariant measure of SDE (\ref{Equation-H}) is the same as that of the corresponding SDE without a drift. From the action-angle transformation we find that the measure $\mu_h$ is the desired object. 
\end{proof}

%%%%%%%%%%%%%%%%%%%%%%%%%%%%%%%%%%%%%%%%%%%%%%%%%%%%%%%%%%
\subsection{The perturbed system and statement of an averaging priciple}
\noindent
We next study the situation where an integrable stochastic Hamiltonian system is perturbed by a transversal smooth vector field and the stochastic differentials. Let $y_0$ be a regular point of $H$ in $M$ with a neighborhood $U_0$ the domain of an action-angle coordinate map:
$$
\varphi^{-1}:U_0\to  \mathbb{T}^d\times D
$$
where $\mathbb{T}^d$ is an $d$-dimensional torus and $D$ is a relatively compact  open set of $\mathbb{R}^n$. Note that the action coordinate of a point $x\in U_0$ can be denoted with the help of the projection  $\pi:U_0 \to D$ by $\varphi^{-1}(x)=(\theta^{\ast},\pi(x))$ for some $\theta^{\ast}\in\mathbb{T}^d$. 
We consider the perturbed system corresponding to (\ref{Equation-H}):
\begin{align}\label{Eqnarray-perturbation}
dY_t^{\varepsilon} =& V_0(Y_t^{\varepsilon})dt+\sum_{k=1}^d V_{k}(Y_t^{\varepsilon})\circ d{B_t^k}+ \sum_{k=1}^d V_{k}(Y_t^{\varepsilon})\diamond dL^{k}(t) \notag\\
&+ \varepsilon \Big(K(Y_t^{\varepsilon})dt+\sum_{k=1}^d  F_k(\pi(Y_t^{\varepsilon}))\circ d{\tilde{B}_t^k}+ \sum_{k=1}^d {G}_k(\pi(Y_t^{\varepsilon}))\diamond d\tilde{L}_t^k\Big)
\end{align}
with initial condition $Y_0^{\varepsilon} = y_0$. 
Where $K$ is a smooth and global Lipschitz continuous vector field, transversal in sense that $\omega^2(V_{k}, K)$, $k=0, 1,...,d$, are not all identically zero; $\tilde{B}(t)$ is a $d$-dimensional independent standard Brownian motion; $\tilde{L}(t)$ is a $d$-dimensional independent pure jump L\'{e}vy motion with the generating triplet $(0, 0, \nu^{\prime})$. Moreover, $F$, $G$ are smooth vector fields such that $F$, $\hat{D}{F}$, $G$ and $\hat{D}{G}$ are globally Lipschitz continuous.
\par
We denote by $Y_t^\varepsilon$ the solution to (\ref{Eqnarray-perturbation}) and by $X_t=Y_t^0$ the solution to  (\ref{Equation-H}) with initial value $y_0$. In the action-angle coordinate, $X_t=\varphi(\theta_t,I_t)$, $\theta\in  \mathbb{T}^d$, $I\in D$ and $Y_t^\varepsilon=\varphi(\theta_t^\varepsilon,I_t^\varepsilon)$, $\theta^\varepsilon\in  \mathbb{T}^d$, $I^\varepsilon\in D$. Let $\tilde{H}_k=H_k(\varphi(\theta_t,I_t))$ be the induced Hamiltonian on $\mathbb{T}^d\times D$,
then, for $i=1,...,d$,
\begin{eqnarray}
\dot{\theta}_k^i&=&\frac{\partial \tilde{H}_k}{\partial I_i}=:\omega_k^i(I) ,\notag\\
\dot{I}_k^i&=&-\frac{\partial \tilde{H}_k}{\partial \theta_i}=0,\notag
\end{eqnarray}
with $\omega_k^i$ smooth functions. Indeed, the corresponding induced Hamiltonian vector field $\tilde{V}_k:=V_{\tilde{H}_k}=-\sum_{i=1}^d({\partial (H_k\circ \varphi)}/{\partial I_i})({\partial}/{\partial \theta_i})$.
\par
For the perturbed SDE (\ref{Eqnarray-perturbation}), we write the induced perturbation vector field of $K$ as $(K_\theta, K_I)$ on $\mathbb{T}^d \times D$ with $K_{\theta}=(K_{\theta}^{1}, ... , K_{\theta}^{d})$ and $K_I=(K_{I}^{1}, ... , K_{I}^{d})$ the angle and action component, respectively, and we do the same thing for $F$ and ${G}$. By the chain rule for Stratonovitch integral as well as that for Marcus integral, we have the following form of the SDE on $\mathbb{T}^d\times D$:
\begin{align}
d\theta_{t}^{\varepsilon}=&\omega_0(I_t^\varepsilon)dt+\sum_{k=1}^{d}\omega_k(I_t^\varepsilon)\circ d{B_t^k}+\sum_{k=1}^{d}\omega_k(I_t^\varepsilon)\diamond dL^k(t)\notag\\
&+\varepsilon \Big(K_\theta( \theta_t^\varepsilon,I_t^\varepsilon)dt
+ \sum_{k=1}^d{F}_{\theta,k}(I_t^\varepsilon)\circ d{\tilde{B}_t^k}
+ \sum_{k=1}^d{G}_{\theta,k}(I_t^\varepsilon)\diamond d{\tilde{L}_t^k}\Big) \label{Eqnarray-I-theta1},\\
dI_{t}^{\varepsilon}=&\varepsilon \Big(K_I( \theta_t^\varepsilon, I_t^\varepsilon)dt
+ \sum_{k=1}^d{F}_{I,k}(I_t^\varepsilon)\circ d{\tilde{B}_t^k}
+ \sum_{k=1}^d{G}_{I,k}(I_t^\varepsilon)\diamond d{\tilde{L}_t^k}\Big). \label{Eqnarray-I-theta2}
\end{align}
\par
Note that subjected to a small perturbation, the system splits into two parts with fast rotation along the nonperturbed trajectories and slow motion across them, so it's a situation where the averaging principle is to be expected to hold. 
\par
For this purpose, we further adopt the following assumptions:
\emph{
\begin{itemize}
\item[\bf{A5}] There is a constant $p\geqslant2$ such that the L\'evy measures $\nu$ (of $L_t$) and $\tilde{\nu}$ (of $\tilde{L}_t$) satisfy
$$
\int_{\mathbb{R}^d}|z|^p\nu(dz)<\infty, ~~~~ and ~~~~\int_{\mathbb{R}^d}|z|^{2p}\tilde{\nu}(dz)<\infty.
$$
\item[\bf{A6}]  For any continuous function $f$ on the compact manifold converging to infinity when $t$ converges to infinity, $\frac{1}{t}\int_s^{s+t}f(X_r)dr\to\int_{M_h} f(z)\mu_h(z)$ when $t\to \infty$, in $L^p$ $(p \geqslant 2)$, and the rate of convergence, denoted by $\eta(t)$, is a positive, bounded, decreasing function from $[0,\infty)$ to $[0,\infty)$ with $\eta(t)\searrow 0$ as $t \to \infty$.
\end{itemize}
}
Some comments on these two assumptions have to be made: Note that the invariant manifold here is actually $d$-dimensional torus, which is compact and bounded. It is necessary and reasonable to put forward assumption {\bf A5} referring to \cite{Hog2}. This assumption indicates the polynomial moments of $L(t)$ and $\tilde{L}(t)$ exist, and will play an important role in estimating some terms of the Marcus equation in the next subsection. Note that the motion on the torus, which would be quai-periodic if there are no diffusion terms, is ergodic. Indeed, there is no standard rate of convergence for general Markovian systems in the ergodic theorem; see e.g. Krengel \cite{KreU}, Kakutani and Petersen \cite{KSPK}. It is natural to deal with an averaging principle in the terms of the function $\eta$ following the approach in Freidlin-Wentzell \cite{Fr}. We thus have the ergodicity assumption {\bf A6}. More information on rates of convergence for L\'evy noise driven systems can be found in Kulik \cite{Kulik} and H\"ogele-de Costa \cite{Hog2}, and a detailed example will be shown in subsection 4.5. \\
\par
To study slow motion governed by the transversal part of the vector field $K$ and the stochastic differentials $F\circ \dot{\tilde{B}}_t$, $G \diamond \dot{\tilde{L}}_t$, it is convenient to rescale the time, see Lemma \ref{lemma 4.4} for detail. Denote $Y_{t/\varepsilon}^{\varepsilon}$ the process scaled in time by $1/\varepsilon$ which coincides, in the sense of probability distributions \cite{Fr}, with $Y_{t}^{\varepsilon}$.
Then, the evolution of $Y_{t/\varepsilon}^{\varepsilon}$ is the skew product of the fast diffusion of order $\frac{1}{\varepsilon}$ along the invariant manifold  and the slow diffusion of order 1 across the invariant manifold. We finally obtain a new dynamical system in the limit as $\varepsilon$ goes to zero: Compared with the motion in the transversal direction, the motion along the torus is significantly faster, thus as the randomness in the fast component is averaged out by the induced invariant measure, the evolution of the action component of $Y_{t/\varepsilon}^{\varepsilon}$ will have a limit. 
\par
The main theorem on averaging principle for (completely) integrable stochastic Hamiltonian system is formulated below, and the detail proof is shown in next subsection.

\begin{theorem}\label{theorem 4.1} (Averaging Principle) 
Consider the perturbed SDE (\ref{Eqnarray-perturbation}) with initial value $Y_0^{\varepsilon} = y_0$ and satisfying assumptions {\bf A1} - {\bf A6} for some $p\geqslant 2$. Set $H_i^\varepsilon(t)=H_i(Y_{t/\varepsilon}^{\varepsilon})$, for $i=1,2,...,d$. Define exit time $\tau^\varepsilon:=\inf \{ t\geqslant 0: Y_{t/\varepsilon}^{\varepsilon}\notin U_0\} $ as the first time that the solution $Y_{t/\varepsilon}^{\varepsilon}$ exists from $U_0$.
\par
Let $\bar{H}(t)=(\bar{H}_1(t), ... ,\bar{H}_d(t))$ be the solution to the following system of $d$ deterministic differential equations
\begin{align}\label{Eqnarray-aver}
d\bar{H}_i(t)=\int_{M_{\bar{H}(t)}}\omega^2(V_{i}, K)(\bar{H}(t),z) \mu_{\bar{H}_t}(dz)dt+\sum_{k=1}^d F_k(\bar{H}(t))\circ d\tilde{B}_t^k+\sum_{k=1}^d G_k(\bar{H}(t))\diamond d\tilde{L}^{k}(t) 
\end{align}
%\begin{align}\label{Eqnarray-aver}
%\frac{d}{dt}\bar{H}_i(t)=\int_{M_{\bar{H}(t)}}\omega^2(V_{i}, K)(\bar{H}(t),z) \mu_{\bar{H}_t}(dz)+\sum_{k=1}^d F_k(\bar{H}(t))\circ \dot{\tilde{B}}_t^k+\sum_{k=1}^d G_k(\bar{H}(t))\diamond \dot{\tilde{L}}^{k}(t) 
%\end{align}
with initial value $\bar{H}(0)=H(y_0)$. Define exit time $\tau^0:=\inf \{ t\geqslant 0 : \bar{H}(t) \notin U_0\} $ as the first time that $\bar{H}(t)$ exists from $U_0$.
\par
Then we have that:
\begin{itemize}
\item[(1)] For any sufficiently small $\varepsilon>0$ and $t<\tau_0$, there exist constants $k_1$, $k_2$, $k_3>0$ such that
\begin{eqnarray}
\bigg( \mathbb{E}\big[ \sup_{s\leqslant t} |H^\varepsilon(s\wedge \tau^\varepsilon)-\bar{H}(s\wedge \tau^\varepsilon)|^p\big] \bigg)^\frac{1}{p}\leqslant k_1 t \big(\varepsilon^{1-k_2 t} +\eta(t|\ln\varepsilon|)\big)\exp(k_3 t).
\end{eqnarray}
\item[(2)] If there exists a $r>0$ such that $U_{r}:=\{ x\in M : |H(x)-H(y_0)|\leqslant r\}\subset U_0$. Define exit time $\tau_\delta:=\inf \{ t\geqslant 0 : |\bar{H}_t-H(Y_0)|\geqslant r-\delta\}$ for $\delta>0$.
Then for any $\delta>0$, constant $k_2>0$ given above, and constants $k_4$, $k_5$ depending on $\tau_\delta$,
\begin{eqnarray}
\mathbb{P}(\tau^\varepsilon<\tau_\delta)\leqslant k_4\delta^{-p} {\tau_\delta}^p \big(\varepsilon^{1-k_2 \tau_\delta} +\eta(\tau_\delta|\ln\varepsilon|)\big)^p\exp(k_5 \tau_\delta).
\end{eqnarray}
\end{itemize}
\end{theorem}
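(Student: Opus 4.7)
The plan is a Khasminskii-type time-averaging argument combined with a Gronwall estimate. The central observation, already captured in Lemma 4.1, is that the unperturbed flow leaves every $H_i$ invariant, so when the Marcus chain rule is applied to $H_i(Y_t^\varepsilon)$ using the action-angle decomposition of the perturbed system, the ``fast'' part drops out entirely and only the $\varepsilon$-perturbation contributes. After rescaling time $t\mapsto t/\varepsilon$ and using the self-similarity of Brownian motion together with the distributional invariance of $\tilde{L}$ under the associated time-change, one arrives at
\[
H_i^\varepsilon(t) = H_i(y_0) + \int_0^t \omega^2(V_i, K)(Y^\varepsilon_{s/\varepsilon})\, ds + \sum_k \int_0^t dH_i(F_k)(I^\varepsilon_{s/\varepsilon}) \circ d\tilde{B}^k_s + \sum_k \int_0^t dH_i(G_k)(I^\varepsilon_{s/\varepsilon}) \diamond d\tilde{L}^k_s.
\]
Subtracting this from the integrated form of the averaged equation for $\bar{H}$ yields the three-piece error decomposition $H_i^\varepsilon(t) - \bar{H}_i(t) = E_1^i(t) + E_2^i(t) + E_3^i(t)$ for drift, Brownian, and Marcus contributions.

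I would dispatch $E_2^i$ and $E_3^i$ by standard stochastic-integral machinery: Burkholder--Davis--Gundy for $E_2^i$ using Lipschitzness of $dH_i(F_k)$, and a Kunita-type $L^p$-inequality for $E_3^i$ using Lipschitzness of $dH_i(G_k)$ together with the regularity of the Marcus flow $\phi$ introduced in the definition of the Marcus integral. Assumption A5 supplies the $p$-th moments of $\tilde{L}$ required for these estimates to close, and both terms are bounded by $C\int_0^t \mathbb{E}\bigl[\sup_{u\le s}|H^\varepsilon(u\wedge\tau^\varepsilon) - \bar{H}(u\wedge\tau^\varepsilon)|^p\bigr]\,ds$.

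The heart of the argument is the Khasminskii treatment of the drift error $E_1^i(t)=\int_0^t [\omega^2(V_i,K)(Y^\varepsilon_{s/\varepsilon}) - \bar{K}_i(\bar{H}(s))]\,ds$, with averaged field $\bar{K}_i(h):=\int_{M_h}\omega^2(V_i,K)(z)\,\mu_h(dz)$. Partition $[0,t]$ into sub-intervals of length $\Delta:=\varepsilon|\ln\varepsilon|$, so that $\Delta/\varepsilon=|\ln\varepsilon|$, and split $E_1^i$ into (i) a sum over sub-intervals of the deviation between a frozen time-average and its spatial counterpart $\bar{K}_i(H^\varepsilon(t_j))$, (ii) a Lipschitz comparison $\bar{K}_i(H^\varepsilon)-\bar{K}_i(\bar{H})$, and (iii) a freezing remainder from replacing $H^\varepsilon(s)$ by $H^\varepsilon(t_j)$ on each sub-interval. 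Conditional on $\mathscr{F}_{t_j}$, the rescaled unperturbed process $Y^\varepsilon_{\cdot/\varepsilon}$ sweeps the torus $M_{H^\varepsilon(t_j)}$ over a time window of length $|\ln\varepsilon|$, so ergodicity (A6) bounds each piece of (i) by $\Delta\cdot\eta(|\ln\varepsilon|)$, telescoping to $Ct\,\eta(t|\ln\varepsilon|)$ once the running dependence on $H^\varepsilon(t_j)$ is accounted for. The freezing remainder inherits the $\varepsilon$-smallness of the perturbation and produces the $\varepsilon^{1-k_2 t}$ term, where the non-trivial exponent reflects the probability of a single large jump of $\tilde{L}$ landing inside a sub-interval of length $\Delta$, handled via A5 and the Marcus jump formula; piece (ii) is Lipschitz in $H^\varepsilon-\bar{H}$ itself because Lemma 4.2 with A4 yields smooth dependence of $\mu_h$ on $h$ over a regular neighborhood.

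Assembling the three pieces and taking $\sup_{s\le t}$ before expectation produces a Gronwall-ready inequality of the form
\[
\mathbb{E}\bigl[\sup_{s\le t}|H^\varepsilon(s\wedge\tau^\varepsilon)-\bar{H}(s\wedge\tau^\varepsilon)|^p\bigr] \le C_1\bigl(t(\varepsilon^{1-k_2 t}+\eta(t|\ln\varepsilon|))\bigr)^p + C_3\int_0^t \mathbb{E}\bigl[\sup_{u\le s}|H^\varepsilon(u\wedge\tau^\varepsilon)-\bar{H}(u\wedge\tau^\varepsilon)|^p\bigr]\,ds,
\]
and Gronwall's inequality then closes part (1). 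Part (2) follows by observing that on $\{\tau^\varepsilon<\tau_\delta\}$ the trajectory $\bar{H}$ remains in $U_{r-\delta}\subset U_0$ while $Y^\varepsilon_{\cdot/\varepsilon}$ has already exited $U_0$, so that $\sup_{s\le\tau^\varepsilon}|H^\varepsilon(s)-\bar{H}(s)|\ge\delta$; Chebyshev's inequality combined with part (1) evaluated at $t=\tau_\delta$ delivers the stated probability bound. The main obstacle I anticipate is the simultaneous control of the Marcus integrals over large jumps and the Khasminskii freezing: the jump flow $\phi(\Delta\tilde{L}^k, G_k, \cdot)$ must be tamed in $L^p$ via A5 while the ergodic average still dominates on short c\`adl\`ag sub-intervals, which forces the delicate choice $\Delta=\varepsilon|\ln\varepsilon|$ balancing the freezing error against the ergodic rate $\eta$.
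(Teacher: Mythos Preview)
Your overall architecture---Marcus chain rule, decomposition into drift/Brownian/L\'evy errors, Khasminskii partition of the drift error with mesh of order $\varepsilon|\ln\varepsilon|$, then Gronwall, and Chebyshev for Part~(2)---matches the paper's. The treatments of $E_2^i$, $E_3^i$, the Lipschitz piece~(ii), and Part~(2) are right. But there is a real gap in piece~(i) of your drift analysis, and it feeds a misidentification of the source of the $\varepsilon^{1-k_2 t}$ term.

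Assumption~A6 is an ergodic statement for the \emph{unperturbed} flow $X_r=\Psi_r(x)$, not for $Y^\varepsilon$. When you write ``the rescaled unperturbed process $Y^\varepsilon_{\cdot/\varepsilon}$ sweeps the torus $M_{H^\varepsilon(t_j)}$,'' you are applying A6 to a process to which it does not literally apply. The paper inserts a separate comparison step (its Lemma~4.4): on each sub-interval $[t_n,t_{n+1}]$ of fast-time length $\Delta_\varepsilon t=t|\ln\varepsilon|$, one replaces $Y^\varepsilon_r$ by the unperturbed flow $\Psi_{r-t_n}(\Theta_{t_n}\omega,Y^\varepsilon_{t_n})$ restarted at the grid point, and shows via a Gronwall argument in action--angle coordinates that $\bigl(\mathbb{E}\sup_{r\le\Delta_\varepsilon t}|f(Y^\varepsilon_r)-f(\Psi_r)|^p\bigr)^{1/p}\le C_1\varepsilon e^{C_2\Delta_\varepsilon t}$. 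Only after this replacement does A6 apply to yield the $\eta(t|\ln\varepsilon|)$ contribution. And this same replacement is what actually produces $\varepsilon^{1-k_2 t}$: summing the per-interval cost $\varepsilon e^{C_2\Delta_\varepsilon t}=\varepsilon\cdot\varepsilon^{-C_2 t}$ over $N_\varepsilon\sim(\varepsilon|\ln\varepsilon|)^{-1}$ sub-intervals, with the outer factor $\varepsilon\Delta_\varepsilon t$, gives $Ct\,\varepsilon^{1-C_2 t}$. The non-unit exponent thus comes from the exponential growth constant in the perturbed-versus-unperturbed comparison (driven by the $I$-dependence of the angular frequencies $\omega_k(I)$), not from ``the probability of a single large jump of $\tilde{L}$ landing inside a sub-interval.'' Assumption~A5 enters only when bounding the Marcus remainder sums inside that comparison (and in your $E_3^i$), not in fixing the exponent. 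Your ``freezing remainder''~(iii), freezing $H^\varepsilon$ at grid points, is a different (and smaller) error; it does not substitute for the $Y^\varepsilon\to\Psi$ replacement.
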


\begin{remark}\label{remark 4.6}%{\bf Remark. 4.7.}
This result includes the case of pure Gaussian noise and case of pure jump noise, where the former situation has been considered, cf. Li \cite[Theorem 3.3.]{Li}. Indeed, Hamiltonian vector $V_0$ in (\ref{Equation-H}) can be weakened to be a locally Hamiltonian vector which is not given by a Hamiltonian function as in \cite{Li}. The main different between Gaussian situation and the situation we considered here comes from the estimation for L\'evy noise term. However, if deterministic part of the perturbation is a (local) Hamiltonian vector field with $\omega^2(V_{i}, K)=0$, or the multiplicative coefficients of the stochastic differentials are not only depend on the slow component, the situation will become more complex. To deal with these problem on multiplicative L\'evy noise is still remain to solve.
\end{remark}

\subsection{Proof of the averaging priciple}
\noindent
In this subsection we always assume that assumptions {\bf A1} - {\bf A6} are in force for some $p\geqslant 2$. We first get the information on the order of which the first integrals for the perturbed system change over a time interval by next lemma.

\begin{lemma}\label{lemma 4.4} %{\bf Lemma. 4.8.}
Let $\tau^\varepsilon=\inf \{ t\geqslant0:Y_t^\varepsilon \notin U_0\} $. For any Lipschitz test function $f : M\to\mathbb{R}$ and $p\geqslant2$, we have
\begin{equation}\label{Equation-41}
\left[
\mathbb{E}(\sup_{s\leqslant t\wedge \tau^\varepsilon} | f(Y_s^\varepsilon)-f(X_s) |^p)
\right]^{\frac{1}{p}}\leqslant C_1\varepsilon e^{C_2 t},
\end{equation}
where $C_1$, $C_2$ are constants depending on the Lipschitz coefficient of $f$, on the upper bounds of the norms of vector fields $K$, $F$, $G$, $V_k$, $k=0,...,d$ and their derivatives with respect to the action-angle coordinate on $ T^d\times D$.
\end{lemma}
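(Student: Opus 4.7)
The plan is to exploit the global Lipschitz property of $f$ to reduce the claim to an $L^p$-bound on the pathwise difference $|Y_s^\varepsilon - X_s|$, and then to close a Gronwall-type estimate. Since $|f(y)-f(x)|\leqslant L_f|y-x|$, it suffices to show $\mathbb{E}\sup_{s\leqslant t\wedge\tau^\varepsilon}|Y_s^\varepsilon-X_s|^p \leqslant C\varepsilon^p e^{C't}$. Subtracting the integral forms of (\ref{Eqnarray-perturbation}) and (\ref{Equation-H}), and converting every Stratonovich and Marcus integral into It\^o form via the L\'evy--It\^o decomposition as in (\ref{Eqnarray-3}), one obtains a decomposition of $Y_t^\varepsilon - X_t$ into (i) \emph{comparison terms} driven by the common $dt$, $dB^k$, $dL^k$ with integrands of the form $V_0(Y^\varepsilon)-V_0(X)$, $V_k(Y^\varepsilon)-V_k(X)$ and the corresponding Marcus/It\^o corrections, all of Lipschitz type by \textbf{A1}, and (ii) the $\varepsilon$-order \emph{perturbation integrals} driven by $dt$, $d\tilde B^k$, $d\tilde L^k$ with integrands $K$, $F_k$, $G_k$.

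For the comparison terms in (i), Burkholder--Davis--Gundy applied to the Brownian integral differences, Kunita's first inequality applied to the compensated small-jump integral differences, a pathwise estimate on the big-jump sum, and H\"older's inequality on the drift differences --- each combined with the global Lipschitz hypothesis \textbf{A1} on $V_k$ and $\hat D V$ and the moment hypothesis \textbf{A5} on $\nu$ --- produce an estimate of the form $C_3\int_0^u \mathbb{E}\sup_{r\leqslant s\wedge\tau^\varepsilon}|Y_r^\varepsilon-X_r|^p\,ds$. For the perturbation integrals in (ii), the crucial observation is that for $s\leqslant\tau^\varepsilon$ the process $Y^\varepsilon$ stays inside the relatively compact action--angle chart $U_0$, on which $K$, $F_k$, $G_k$, and the derivatives that appear in the Marcus correction are uniformly bounded. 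Therefore BDG on $\varepsilon\!\int F_k\circ d\tilde B^k$ contributes at most $\varepsilon^p t^{p/2}$; Kunita on $\varepsilon\!\int G_k\diamond d\tilde L^k$ together with \textbf{A5} on $\tilde\nu$ contributes $\varepsilon^p(t^{p/2}+t^p)$; and the drift contributes $\varepsilon^p t^p$. In total the $\varepsilon$-contribution is at most $C_4\varepsilon^p(t^p + t^{p/2})$.

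Combining (i) and (ii) yields the Gronwall-type inequality $\mathbb{E}\sup_{s\leqslant u\wedge\tau^\varepsilon}|Y_s^\varepsilon-X_s|^p \leqslant C_4\varepsilon^p(t^p + t^{p/2}) + C_3\int_0^u \mathbb{E}\sup_{r\leqslant s\wedge\tau^\varepsilon}|Y_r^\varepsilon-X_r|^p\,ds$, to which Gronwall's lemma delivers $C_4\varepsilon^p(t^p+t^{p/2})e^{C_3 t}$. Taking $p$-th roots and absorbing the polynomial factor $t + t^{1/2}$ into the exponential with a slightly larger rate gives the desired bound $C_1\varepsilon e^{C_2 t}$, with constants of the claimed dependence. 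The part I expect to be most delicate is the bookkeeping of the Marcus jump integral and of its correction sum $\sum_{s\leqslant t}[\phi(\Delta L^k)(X_{s-})-X_{s-}-V_k(X_{s-})\Delta L^k(s)]$: one must verify that $x\mapsto \phi(z)(x)-x$ is Lipschitz in $x$ uniformly in $|z|<1$, which follows from a Gronwall argument on the ODE (\ref{Equation-phi}) using the Lipschitz hypothesis on $V_k$, and that it has an integrable $p$-th moment against $\nu$ on $|z|\geqslant 1$, which is precisely where \textbf{A5} enters. These two ingredients are what allow the Lipschitz/Gronwall scheme of the Brownian case in Li \cite{Li} to close at the $L^p$-level in the presence of L\'evy jumps.
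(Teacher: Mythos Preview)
Your approach is a legitimate alternative, but it is genuinely different from the paper's. The paper does \emph{not} compare $Y^\varepsilon$ and $X$ directly on $M$; it passes to action--angle coordinates $(\theta,I)$ via $\varphi^{-1}$ and estimates $|I^\varepsilon_s-I_s|$ and $|\theta^\varepsilon_s-\theta_s|$ separately. The gain is structural: since the unperturbed action is exactly conserved ($dI_t=0$), the expansion of $|I^\varepsilon-I|^p$ contains \emph{no} comparison terms at all, only the $\varepsilon$-order perturbation integrals, and is bounded outright by $c\,\varepsilon^p(1+t^{2p+1})$. In the angle equation the comparison terms take the form $\omega_k(I^\varepsilon)-\omega_k(I)$, which are already controlled by the action estimate and hence act as $\varepsilon^p$-size source terms rather than as Gronwall feedback; moreover, because $\omega_k$ and $F_\theta$ depend only on $I$, the Stratonovich corrections in the angle comparison vanish. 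Your direct route trades these simplifications for a uniform Lipschitz argument on $M$: it uses complete integrability only to keep both processes inside $U_0$, but must carry the full Stratonovich/Marcus correction of the $V_k$ through the comparison via the Lipschitz hypothesis on $\hat D V$.

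One point deserves more care. The closure is not a clean linear Gronwall as you state it. After BDG on the Brownian comparison integral you obtain $\mathbb E\bigl(\int_0^t |Y^\varepsilon_s-X_s|^2\,ds\bigr)^{p/2}$, which is not of the form $C\int_0^t \phi(s)\,ds$ for $\phi(s)=\mathbb E\sup_{r\leqslant s}|Y^\varepsilon_r-X_r|^p$; a naive H\"older step introduces a $t^{p/2-1}$ coefficient and yields only $e^{Ct^{p/2}}$. The paper avoids this by applying the chain rule to $|\cdot|^p$ first and then estimating the resulting martingales with BDG at exponent $1$, which produces the nonlinear inequality $u(t)\leqslant c\bigl[(\int_0^t u)^{1/2}+\int_0^t u\bigr]+c\,\varepsilon^p\,\mathrm{poly}(t)$ and closes via a Gronwall--Bihari argument with the concave function $f(x)=\sqrt{x}+x$. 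In your scheme you should either adopt this chain-rule/Bihari route, or first prove the pointwise bound $\mathbb E|Y^\varepsilon_t-X_t|^p\leqslant C\varepsilon^p e^{Ct}$ via It\^o's formula on $|\cdot|^p$ (where the martingale terms vanish under expectation and the Gronwall is genuinely linear) and then upgrade to the supremum by a second BDG pass.
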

\begin{proof}
In action-angle coordinates, we rewrite the flows as  $X_t = \varphi(\theta_t, I_t)$ and $Y_t^\varepsilon = \varphi(\theta_t^\varepsilon, I_t^\varepsilon)$. And the corresponding SDEs on $ T^d\times D$ under the action-angle coordinate map are shown in (\ref{Eqnarray-I-theta1})-(\ref{Eqnarray-I-theta2}). Since $D$ is relatively compact, $\partial({f}\circ\varphi)/\partial \theta$ and $\partial({f}\circ\varphi)/\partial I$ are bounded on $ T^d\times D$. We thus obtain
\begin{eqnarray}
|f(Y_t^\varepsilon)-f(X_t)|&=&|f\circ\varphi( \theta_t^\varepsilon, I_t^\varepsilon)-f\circ\varphi(\theta_t, I_t)| \notag\\
&\leqslant& c_0|(\theta_t^\varepsilon- \theta_t, I_t^\varepsilon-I_t)|
\leqslant c_0| \theta_t^\varepsilon- \theta_t|+c_0|I_t^\varepsilon-I_t|,
\end{eqnarray}
for some constant $c_0>0$.
\\
{\bf Estimate of the action component $|I_t^\varepsilon-I_t|$.}  ~
Note that the facts that equation (\ref{Eqnarray-I-theta1}) satisfies the chain rule in the sense of Stratonovitch and Marcus, and $  \langle Dh(x),u  \rangle=p|x|^{p-2} \langle x,u  \rangle$ for the function $h(x)=|x|^p$, we obtain, for $s<\tau^\varepsilon$,
\begin{align}
| I_s^\varepsilon-I_s|^p=
&\varepsilon p \Big( \int_0^s | I_r^\varepsilon-I_r|^{p-2} \left \langle I_r^\varepsilon-I_r,  K_I( \theta_t^\varepsilon, I_r^\varepsilon)\right \rangle dr\notag\\
& +\int_0^s | I_r^\varepsilon-I_r|^{p-2}   \sum_{k=1}^d\left \langle I_r^\varepsilon-I_r,   F_{I,k}(I_r^\varepsilon) \circ d{B_r^k}\right \rangle \notag\\
& +\int_0^s | I_{r-}^\varepsilon-I_{r-}|^{p-2}  \sum_{k=1}^d\left \langle I_{r-}^\varepsilon-I_{r-},   {G}_{I,k}(I_{r-}^\varepsilon) \diamond d{\tilde{L}_r^k}\right \rangle \Big)\notag\\
\leqslant
&\varepsilon p  \int_0^s | I_r^\varepsilon-I_r|^{p-2} \Big| \left \langle I_r^\varepsilon-I_r,  K_I(\theta_t^\varepsilon,I_r^\varepsilon)+\frac{d}{2}\hat{D}_IF_I(I_r^\varepsilon)\right \rangle \Big|dr\tag{$\Sigma_1$}\\
& + \varepsilon p\sum_{k=1}^d\int_0^s | I_r^\varepsilon-I_r|^{p-2} \big| \left \langle I_r^\varepsilon-I_r,   F_{I,k}(I_r^\varepsilon) d{B_r^k}\right \rangle\big|\tag{$\Sigma_2$}\\
& + \varepsilon p\sum_{k=1}^d\int_0^s | I_{r-}^\varepsilon-I_{r-}|^{p-2} \big| \left \langle I_{r-}^\varepsilon-I_{r-},   {G}_{I,k}(I_{r-}^\varepsilon) d{\tilde{L}_r^k} \right \rangle\big|\Big)\tag{$\Sigma_3$}\\
& + \varepsilon p \sum_{k=1}^d\sum_{0\leqslant r\leqslant s} \int_0^s | I_{r-}^\varepsilon-I_{r-}|^{p-1} dt
\big| \phi(\Delta L^k (r), {G}_{I,k}(I_{r-}^\varepsilon), I_{r-}^\varepsilon) \notag\\
&- I_{r-}^\varepsilon-{G}_{I,k}(I_{r-}^\varepsilon)\Delta L^k (r)\big|\tag{$\Sigma_4$}\\
=&\Sigma_1+\Sigma_2+\Sigma_3+\Sigma_4, \label{Align-Sigma}
\end{align}
where $(\hat{D}_I{F}_{I})_i=\max_{1\leqslant k \leqslant d} |{F}_{I,k}\cdot \nabla_I {F}_{I,k}^{i}|$ comes from the Stratonovitch correction. 
By assumption the induced vector fields and their derivatives are bounded on $ T^d\times D$. A direct computation gives
\begin{align}\label{Align-Sigma_1}
\Sigma_1\leqslant \varepsilon p \big(\sup_{ T^d\times D} |K_I|+\frac{d}{2}\sup_{ T^d\times D}|\hat{D}_I{F}_{I}|\big)\int_0^s | I_r^\varepsilon-I_r|^{p-1} dr.%\triangleq\varepsilon c_1\int_0^s | I_r^\varepsilon-I_r|^{p-1} dr.
\end{align}
Note that the term $\Sigma_3$ has the representation with respect to the compensated Possion random measure $\tilde{N}^{\prime}$ associated to $\tilde{L}(t)$ \cite{Ap, Ku}, we have
\begin{align}\label{Align-Sigma_3}
\Sigma_3
=&\varepsilon p \sum_{k=1}^d\int_0^s \int_{\mathbb{R}^d\setminus \{ 0 \}}| I_{r-}^\varepsilon-I_{r-}|^{p-2} \big| \left \langle I_{r-}^\varepsilon-I_{r-},   {G}_{I,k}(I_{r-}^\varepsilon)z\right \rangle {\tilde{N}(dr,dz)} \big|\notag\\
&+\varepsilon p \sum_{k=1}^d\int_0^s \int_{|z|> 1}| I_{r-}^\varepsilon-I_{r-}|^{p-2} \big| \left \langle I_{r-}^\varepsilon-I_{r-},   {G}_{I,k}(I_{r-}^\varepsilon)z\right \rangle \nu^{\prime}(dz)dr \big|\notag\\
\leqslant&\varepsilon p \sum_{k=1}^d\int_0^s \int_{\mathbb{R}^d\setminus \{ 0 \}}| I_{r-}^\varepsilon-I_{r-}|^{p-2} \big| \left \langle I_{r-}^\varepsilon-I_{r-},   {G}_{I,k}(I_{r-}^\varepsilon)z\right \rangle\big| {\tilde{N}(dr,dz)} \notag\\
&+\varepsilon p d \sup_{ T^d\times D} |{G}_I| \int_{|z|> 1}| z|\nu^{\prime}(dz)\int_0^s| I_{r-}^\varepsilon-I_{r-}|^{p-1}dr.
\end{align}
As smooth vector fields ${G}_I$ and $\hat{D}{G}_I$ are globally Lipschitz continuous. For the last term, by exploiting that $\int_{|z|> 1}| z|^4\nu^{\prime}(dz)<\infty$, we have the following estimation referring to \cite[Lemma 3.1]{Hog2}:
\begin{align}\label{Align-Sigma_4}
\Sigma_4
\leqslant&\varepsilon^2 c_1(p,{G}_I,\hat{D}{G}_I) \sum_{k=1}^d\sum_{0\leqslant r\leqslant s}| I_{r-}^\varepsilon-I_{r-}|^{p-1}|\triangle\tilde{L}^k(r)|^4 \notag\\
\leqslant&\varepsilon^2 c_1\Big(\int_0^{  s}  \int_{\mathbb{R}^d\setminus \{ 0 \}}|I_{r-}^\varepsilon-I_{r-}|^{p-1}|z|^4\tilde{N}^{\prime}(dr,dz)+ \int_0^{  s} | I_{r-}^\varepsilon-I_{r-}|^{p-1}dr\Big).
\end{align}
Combining the estimates (\ref{Align-Sigma_1}), (\ref{Align-Sigma_3}) and (\ref{Align-Sigma_4}), we can find that
\begin{align}\label{Align-Sigma0}
| I_s^\varepsilon-I_s|^p\leqslant
&c_2\varepsilon\int_0^{  s} | I_{r-}^\varepsilon-I_{r-}|^{p-1}dr 
 + \varepsilon p\sum_{k=1}^d\int_0^s | I_r^\varepsilon-I_r|^{p-2} \big| \left \langle I_r^\varepsilon-I_r,   F_{I,k}(I_r^\varepsilon) \right \rangle\big|d{B_r^k}\notag\\
&+\varepsilon p \sum_{k=1}^d\int_0^s \int_{\mathbb{R}^d\setminus \{ 0 \}}| I_{r-}^\varepsilon-I_{r-}|^{p-2} \big| \left \langle I_{r-}^\varepsilon-I_{r-},   {G}_{I,k}(I_{r-}^\varepsilon)z\right \rangle\big| {\tilde{N}(dr,dz)} \notag\\
&+\varepsilon^2 c_3\int_0^{  s}  \int_{\mathbb{R}^d\setminus \{ 0 \}}|I_{r-}^\varepsilon-I_{r-}|^{p-1}|z|^4\tilde{N}^{\prime}(dr,dz).
\end{align}
In order to calculate estimate of the expectation of the supremum for the equation above, it is natural to use It\^o isometry for the Brownian term and use Kunita's first inequality (\cite[Page 265]{Ap}) or other maximal inequality for the compensated Possion terms. 
We refer to H\"ogele-da Costa \cite{Hog2} for a standard argument on such a estimate. One difference with  \cite{Hog2} is that there is an extra Brownian term here. Indeed, with the help of It\^o isometry, we obtain  
\begin{align}\label{Align-Sigma_2E}
\mathbb{E}\big[\sup_{ s\leqslant t\wedge \tau^\varepsilon} |\Sigma_2|^2\big] 
\leqslant 
\varepsilon c_4(p,d) \big(\sup_{ T^d\times D}|F_{I}|\big)^2 \int_0^{t}\mathbb{E}\big[| I_s^\varepsilon-I_s|^{2(p-1)}\big]ds.
\end{align}
Therefore, the estimate for (\ref{Align-Sigma0}) is quite similar to estimate (44) in \cite{Hog2} and yields a constant $c_5$ such that
\begin{align}\label{Align-Action}
\mathbb{E}\big[\sup_{ s\leqslant t\wedge \tau^\varepsilon} |I_s^{\varepsilon}-I_s|^p\big]
\leqslant& c_{5}\varepsilon^p(1+t^{2p+1}).
\end{align}
\\%%%%%%%%%%%%%%%%%%%%%%%%%%%%%%%%%%%%%%%%%%%%%%%%%%%%%%%%%%%%%%%%%%%%%%%%%%%%%%%%%%%%%%%%%%%%%%%%%%%%%%%%%%%%%
{\bf Estimate of the angle component $|\theta_t^\varepsilon-\theta_t|$.} ~ For $s<\tau^\varepsilon$, 
applying the chain rule again, we have
\begin{align}
|\theta_s^\varepsilon-\theta_s|^p \leqslant
& p \int_0^s | \theta_r^\varepsilon-\theta_r|^{p-2} \left \langle \theta_r^\varepsilon-\theta_r,  \omega_0^i(I_r^\varepsilon)-\omega_0^i(I_r)\right \rangle dr \tag{$\Lambda_1$}\\
&+ p \sum_{k=1}^{d} \int_0^s | \theta_r^\varepsilon-\theta_r|^{p-2} \left \langle \theta_r^\varepsilon-\theta_r,  (\omega_k^i(I_r^\varepsilon)-\omega_k^i(I_r)) dB^k(r)\right \rangle  \tag{$\Lambda_2$}\\
&+ p \sum_{k=1}^{d} \int_0^s | \theta_r^\varepsilon-\theta_r|^{p-2} \left \langle \theta_r^\varepsilon-\theta_r,  (\omega_k^i(I_r^\varepsilon)-\omega_k^i(I_r)) dL^k(r)\right \rangle  \tag{$\Lambda_3$}\\
&+ p \sum_{k=1}^{d}\sum_{0\leqslant r\leqslant s} \int_0^s  | \theta_r^\varepsilon-\theta_r|^{p-1}\big|\phi(\Delta L^k (r), \omega_k^i(I_{r-}^\epsilon), I_{r-}^\epsilon) -\phi(\Delta L^k (r), \omega_k^i(I_{r-}), I_{r-})\notag\\
&- \big(I_{r-}^{\epsilon,i}-I_{r-}^i\big)-\big(\omega_k^i(I_{r-}^\epsilon)-\omega_k^i (I_{r-})\big)\Delta L^k (r) \big|  \tag{$\Lambda_4$}\notag\\
&+\varepsilon p  \int_0^s | \theta_r^\varepsilon-\theta_r|^{p-2} \Big| \left \langle \theta_r^\varepsilon-\theta_r,  K_\theta(\theta_t^\varepsilon,I_r^\varepsilon)\right \rangle \Big|dr \tag{$\Lambda_5$}\\
& + \varepsilon p\sum_{k=1}^d\int_0^s | \theta_r^\varepsilon-\theta_r|^{p-2} \big| \left \langle \theta_r^\varepsilon-\theta_r,   {F}_{\theta,k}(I_r^\varepsilon) d{B_r^k}\right \rangle\big| \tag{$\Lambda_6$}\\
& + \varepsilon p\sum_{k=1}^d\int_0^s | \theta_{r-}^\varepsilon-\theta_{r-}|^{p-2} \big| \left \langle \theta_{r-}^\varepsilon-\theta_{r-},   {G}_{\theta,k}(I_{r-}^\varepsilon) d{\tilde{L}_r^k} \right \rangle\big| \tag{$\Lambda_7$}\\
& + \varepsilon p \sum_{k=1}^d\sum_{0\leqslant r\leqslant s} \int_0^s | I_{r-}^\varepsilon-I_{r-}|^{p-1} dt
\big| \phi(\Delta L^k (r), {G}_{\theta,k}(I_{r-}^\varepsilon), I_{r-}^\varepsilon) \notag\\
&- I_{r-}^\varepsilon-{G}_{\theta,k}(I_{r-}^\varepsilon)\Delta L^k (r)\big| \tag{$\Lambda_8$}\\
=&\Lambda_1+\Lambda_2+\Lambda_3+\Lambda_4+\Lambda_5+\Lambda_6+\Lambda_7+\Lambda_8. \label{Align-Lambda}
\end{align}
Here we can replace the Stratonovitch integrations by It\^o integrations, as both $\omega_k(I)$ and $F_{\theta}(I)$ do not depend on $\theta$ and the Stratonovitch correction terms vanish. We next estimate each summand on the right hand side of equation above. Note that, for $k=0,1,2,...,d$,
\begin{equation}
|\omega_k(I_r^\varepsilon)-\omega_k(I_r)| \leqslant \sup_{ T^d\times D} |d{\omega}_k|\cdot |I_r^\varepsilon-I_r|. % \leqslant c_2(d{\omega}_k,K_I)\varepsilon t.
\end{equation}
The first term $\Lambda_1$ can be dealt with by Lipschitz estimate. Indeed, by Young's inequality and (\ref{Align-Action}), clearly we have
\begin{align}\label{Align-Lambda1E}
\mathbb{E}\big[\sup_{ s\leqslant t\wedge \tau^\varepsilon} \Lambda_1\big] 
&\leqslant c_{6}(p,d{\omega}_0) \mathbb{E}\Big[\int_0^{t\wedge \tau^\varepsilon} | \theta_s^\varepsilon-\theta_s|^{p-1}|I_s^\varepsilon-I_s| ds\Big]\notag\\
&\leqslant c_{6}\int_0^{t\wedge \tau^\varepsilon}  \mathbb{E}\big[\sup_{ r\leqslant s\wedge \tau^\varepsilon } | \theta_r^\varepsilon-\theta_r|^{p}|\big] ds+c_{13}\varepsilon^p t^{p+1}.
\end{align}
For the stochastic It\^o terms, we use the different kinds of maximal inequalities and the embedding $L^2\subset L^1$. 
It\^o isometry and yield
\begin{align}\label{Align-Lambda2E}
\mathbb{E}\big[\sup_{ s\leqslant t\wedge \tau^\varepsilon} \Lambda_2\big] 
&\leqslant c_{7}(p,d,d{\omega}_k) \mathbb{E}\Big[\int_0^{t\wedge \tau^\varepsilon} | \theta_s^\varepsilon-\theta_s|^{2p}|I_s^\varepsilon-I_s|^2 ds\Big]^{\frac{1}{2}}\notag\\
&\leqslant c_{7}\Big(\int_0^{t}  \mathbb{E}\big[\sup_{ r\leqslant s\wedge \tau^\varepsilon } | \theta_r^\varepsilon-\theta_r|^{p}|\big] ds\Big)^{\frac{1}{2}}+c_{15}\varepsilon^p t^{p+1}.
\end{align}
Kunita's first inequality (\cite[Page 265]{Ap}) with the exponent 2 yields
\begin{align}\label{Align-Lambda3E}
\mathbb{E}\big[\sup_{ s\leqslant t\wedge \tau^\varepsilon} \Lambda_3\big] 
\leqslant& c_{8}(p,d,d{\omega}_k) \Big(\mathbb{E}\Big[\int_0^{t\wedge \tau^\varepsilon} \int_{\mathbb{R}^d\setminus \{ 0 \}}| \theta_s^\varepsilon-\theta_s|^{2p}|I_s^\varepsilon-I_s|^2 |z|^2\nu(dz)ds\Big]^{\frac{1}{2}}\notag\\
&+ \int_{|z|> 1}| z|\nu(dz)\mathbb{E}\Big[\int_0^{t\wedge \tau^\varepsilon} | \theta_s^\varepsilon-\theta_s|^{p-1}|I_s^\varepsilon-I_s| ds\Big]\Big)\notag\\
\leqslant& c_{9}\bigg( \Big(\int_0^{t}  \mathbb{E}\big[\sup_{ r\leqslant s\wedge \tau^\varepsilon } | \theta_r^\varepsilon-\theta_r|^{p}|\big] ds\Big)^{\frac{1}{2}}+\int_0^{t}  \mathbb{E}\big[\sup_{ r\leqslant s\wedge \tau^\varepsilon } | \theta_r^\varepsilon-\theta_r|^{p}|\big] ds + \varepsilon^p t^{p+1} \bigg)
\end{align}
For canonical Marcus terms, we adapt the methods developed in \cite[Section 3]{Hog2}. In fact, the term $\Lambda_4$ can be estimated in terms of the quadratic variation of $L_t$ as shown in \cite{Hog2}. We rewrite the result in terms of the compensated Poisson random measure $\tilde{N}$ and then obtain
\begin{align}\label{Align-Lambda4E}
\mathbb{E}\big[\sup_{ s\leqslant t\wedge \tau^\varepsilon} \Lambda_4\big] 
\leqslant& c_{10} \mathbb{E}\Big[\sup_{ s\leqslant t\wedge \tau^\varepsilon} \sum_{k=1}^d \sum_{0<s\leqslant t} | \theta_s^\varepsilon-\theta_s|^{p-1}|I_s^\varepsilon-I_s| |\triangle L^k(s)|^2\Big]\notag\\
\leqslant& c_{11}\Big(\int_{\mathbb{R}^d\setminus \{ 0 \}}|z|^4\nu(dz)\int_0^{t\wedge \tau^\varepsilon}  \mathbb{E}\big[\sup_{ r\leqslant s\wedge \tau^\varepsilon } | \theta_r^\varepsilon-\theta_r|^{2(p-1)}|I_r^\varepsilon-I_r|^2\big] ds\Big)^{\frac{1}{2}} \notag\\
&+\int_0^{t\wedge \tau^\varepsilon}  \mathbb{E}\big[\sup_{ r\leqslant s\wedge \tau^\varepsilon } | \theta_r^\varepsilon-\theta_r|^{p-1}|\big] ds\notag\\
\leqslant&  c_{12}\bigg( \Big(\int_0^{t}  \mathbb{E}\big[\sup_{ r\leqslant s\wedge \tau^\varepsilon } | \theta_r^\varepsilon-\theta_r|^{p}|\big] ds\Big)^{\frac{1}{2}}+\int_0^{t}  \mathbb{E}\big[\sup_{ r\leqslant s\wedge \tau^\varepsilon } | \theta_r^\varepsilon-\theta_r|^{p}|\big] ds+\varepsilon^p t^{p+1}\bigg)
\end{align}
Observe that the terms $\Lambda_5$ - $\Lambda_8$ are structurally identical to $\Sigma_1$ - $\Sigma_4$ and they can be estimated analogously by replacing $K_I+\frac{d}{2}\hat{D}_IF_I$, $\tilde{K}_I$ and $\tilde{G}_I$ by $K_\theta$, $\tilde{K}_\theta$ and $\tilde{G}_\theta$, respectively. Hence
\begin{align}\label{Align-Lambda5E}
\mathbb{E}\big[\sup_{ s\leqslant t\wedge \tau^\varepsilon} \Lambda_5\big] 
&\leqslant c_{13} \varepsilon\mathbb{E}\Big[\int_0^{t\wedge \tau^\varepsilon} | \theta_s^\varepsilon-\theta_s|^{p-1}|ds\Big]\notag\\
&\leqslant c_{14} \varepsilon\int_0^{t}  \mathbb{E}\big[\sup_{ r\leqslant s\wedge \tau^\varepsilon } | \theta_r^\varepsilon-\theta_r|^{p}|\big] ds+c_{14}\varepsilon^p t,
\end{align}
\begin{align}\label{Align-Lambda6E}
\mathbb{E}\big[\sup_{ s\leqslant t\wedge \tau^\varepsilon} \Lambda_6\big] 
&\leqslant c_{15} \varepsilon \mathbb{E}\Big[\int_0^{t\wedge \tau^\varepsilon} | \theta_s^\varepsilon-\theta_s|^{2(p-1)} ds\Big]^{\frac{1}{2}}\notag\\
&\leqslant c_{16} \varepsilon \Big(\int_0^{t}  \mathbb{E}\big[\sup_{ r\leqslant s\wedge \tau^\varepsilon } | \theta_r^\varepsilon-\theta_r|^{p}|\big] ds\Big)^{\frac{1}{2}}+c_{16}\varepsilon^p t,
\end{align}
and
\begin{align}\label{Align-Lambda78E}
\mathbb{E}\big[\sup_{ s\leqslant t\wedge \tau^\varepsilon} (\Lambda_7+\Lambda_8)\big] 
\leqslant& c_{17} \varepsilon \Big(\mathbb{E}\Big[\int_0^{t\wedge \tau^\varepsilon} | \theta_s^\varepsilon-\theta_s|^{2(p-1)} ds\Big]^{\frac{1}{2}}
+ \mathbb{E}\Big[\int_0^{t\wedge \tau^\varepsilon} | \theta_s^\varepsilon-\theta_s|^{p-1}|ds\Big]\Big)\notag\\
\leqslant&  c_{18}  \bigg( \Big(\int_0^{t}  \mathbb{E}\big[\sup_{ r\leqslant s\wedge \tau^\varepsilon } | \theta_s^\varepsilon-\theta_s|^{p}|\big] ds\Big)^{\frac{1}{2}}+\int_0^{t}  \mathbb{E}\big[\sup_{ r\leqslant s\wedge \tau^\varepsilon } | \theta_r^\varepsilon-\theta_r|^{p}|\big] ds+\varepsilon^p t\bigg).
\end{align}
Taking the supremum and expectation in inequality (\ref{Align-Lambda}) and combining the estimates (\ref{Align-Lambda1E}) - (\ref{Align-Lambda78E}), we obtain
\begin{align}\label{Align-Angle}
\mathbb{E}\big[\sup_{ s\leqslant t\wedge \tau^\varepsilon}|\theta_s^\varepsilon-\theta_s|^p\big]
\leqslant&  c_{19}  \bigg( \Big(\int_0^{t}  \mathbb{E}\big[\sup_{ r\leqslant s\wedge \tau^\varepsilon } | \theta_r^\varepsilon-\theta_r|^{p}|\big] ds\Big)^{\frac{1}{2}}+\int_0^{t}  \mathbb{E}\big[\sup_{ r\leqslant s\wedge \tau^\varepsilon } | \theta_r^\varepsilon-\theta_r|^{p}|\big] ds\notag\\
&+t\varepsilon^p(1+t^p)\bigg).
\end{align}
That is, for $u(t):=\mathbb{E}\big[\sup_{ s\wedge \tau^\varepsilon \leqslant t}|\theta_s^\varepsilon-\theta_s|^p\big]$, $p(t)=t\varepsilon^p(1+t^p)$ and the concave invertible function $f(x)=\sqrt{x}+x$ we have
\begin{align}\label{Align-Angle-0}
u(t)\leqslant c_{19} f\Big(\int_0^{t} u(s) ds\Big)+c_{19}p(t).
\end{align}
By the nonlinear extension of the Gronwall-Bihari inequality (see \cite{PBG}), or a nonlinear comparison principle in \cite{Hog2}, we finally have
\begin{align}\label{Align-Angle-1}
\mathbb{E}\big[\sup_{ s\leqslant t\wedge \tau^\varepsilon}|\theta_s^\varepsilon-\theta_s|^p\big]\leqslant c_{20}\varepsilon^{2p}\exp({c_{31}t}).
\end{align}
\\
Eventually, the desired result follows from Minkowski's inequality and the estimates (\ref{Align-Action}) and (\ref{Align-Angle-1}),
\begin{eqnarray}
\left[
\mathbb{E}(\sup_{s\leqslant t\wedge \tau^\varepsilon} | f(Y_s^\varepsilon)-f(X_s) |^p)
\right]^{\frac{1}{p}}
\leqslant C_1\varepsilon\exp(C_2 t).
\end{eqnarray}
\end{proof}

This lemma shows that, over a time interval $t$, the first integrals of the perturbed system change by an order $\varepsilon \exp(C_2 t)$, and the slow component thus accumulate over a time interval of the size $t/\varepsilon$. Next, we would like to show that the randomness in the fast component could be averaged out by the induced invariant measure, and we can obtain a new dynamical system as $\varepsilon$ goes to zero. 
\par
For convenience, we adopt the following notation. Let $g: M\to \mathbb{R}$ be a continuous function, and $\tilde{g}: \mathbb{T}^d\times D \to \mathbb{R}$ be its representation in action-angle coordinate. We define the average of $g$ over the torus as $Q^g: D\subset\mathbb{R}^d\to\mathbb{R}$, i.e.,
\begin{equation}
Q^g(h)=\int_{ \mathbb{T}^d}\tilde{g}(h,z)\mu(dz)
\end{equation}
We remark that this average can be also understood in the sense of $\mu_h$ by taking the canonical transformation map $\pi^{\prime}: M_h\to \mathbb{T}^d$. Indeed, the induced measure $\pi^{\prime}(\mu_h)$ is the Lebesque measure $\mu$ on the torus and the average can be written as $Q^g(h)=\int_{M_h}g(h,z)\mu_h(dz)$ formally.

\begin{lemma}\label{lemma 4.5}
(Estimation of the averaging error) Suppose that $g$ is continuous on $U_0$. Set $H_i^{\varepsilon}(s)=H_i(Y_{t/\varepsilon}^{\varepsilon})$ and
$H^\varepsilon(s)=(H_1^\varepsilon(s),...,H_d^\varepsilon(s))$. For $\tau^\varepsilon=\inf \{ t\geqslant0:Y_{t/\varepsilon}^{\varepsilon}\notin U_0\} $, we denote by
\begin{equation}\label{Eqnarray-error}
\delta^g(\varepsilon,t)=\int_{s\wedge \tau^\varepsilon}^{(s+t)\wedge \tau^\varepsilon}g(Y_{r/\varepsilon}^{\varepsilon})dr
-\int_{s\wedge (\tau^\varepsilon/\varepsilon)}^{(s+t)\wedge (\tau^\varepsilon/\varepsilon)}Q^g(H^\varepsilon(r))dr
\end{equation}
the averaging error. Then,  for any given $t>0$ and sufficiently small $\varepsilon>0$, there are constants $k_1,k_2>0$ such that 
\begin{equation}\label{Eqnarray-error-est}
\big(\mathbb{E}[\sup_{s\leqslant t} |\delta^g(\varepsilon,s)|^p\big)^{\frac{1}{p}}\leqslant k_1 t \big(\varepsilon^{1-k_2 t} +\eta(t|\ln\varepsilon|)\big).
\end{equation}
where $\eta(t)$ is the rate of convergence for ergodicity assumption ${\bf A6}$.
\end{lemma}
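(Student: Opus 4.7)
The plan is a standard time-discretization argument adapted to the Marcus/Lévy setting: split the interval of integration into many small fast-time windows on each of which the slow component $H^\varepsilon$ is essentially frozen, and on each of which the fast angular dynamics has enough time to equilibrate to the invariant measure $\mu_h$ on the torus. The deterministic-action errors come from Lemma \ref{lemma 4.4} and from the $O(\varepsilon)$ slow drift in equation (\ref{Eqnarray-I-theta2}); the averaging errors come from the rate $\eta$ in assumption {\bf A6}.

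First, I would rewrite both integrals in the fast time scale $u = r/\varepsilon$ and choose a single fast-time window length $\tau = \tau(\varepsilon,t) := t|\ln\varepsilon|$, so that the number of subintervals is $N \simeq 1/(\varepsilon|\ln\varepsilon|)$. On the $j$-th window $[u_j, u_{j+1}]$ (stopped at $\tau^\varepsilon/\varepsilon$) I introduce an auxiliary unperturbed trajectory $X^{(j)}$ driven by the same Brownian and Lévy noises as (\ref{Equation-H}), started at $X^{(j)}_{u_j} = Y^\varepsilon_{u_j}$. Then I decompose
\begin{align*}
\delta^g(\varepsilon,s)
&= \sum_j \int_{r_j}^{r_{j+1}} \bigl[\,g(Y^\varepsilon_{r/\varepsilon}) - g(X^{(j)}_{r/\varepsilon})\,\bigr]\,dr \tag{A}\\
&\quad + \sum_j \varepsilon \int_{u_j}^{u_{j+1}} \bigl[\,g(X^{(j)}_u) - Q^g\bigl(H(X^{(j)}_{u_j})\bigr)\,\bigr]\,du \tag{B}\\
&\quad + \sum_j \int_{r_j}^{r_{j+1}} \bigl[\,Q^g(H^\varepsilon(r_j)) - Q^g(H^\varepsilon(r))\,\bigr]\,dr, \tag{C}
\end{align*}
where $r_j = \varepsilon u_j$. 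Term (A) is treated by Lemma \ref{lemma 4.4} applied to the Lipschitz test function $g$ on each window, giving the per-window $L^p$ bound $C_1 \varepsilon e^{C_2\tau}$, which upon integration in $r$ and summation over $N$ windows yields $t \cdot \varepsilon^{\,1 - C_2 t}$ (this is where the $t$-dependent exponent $k_2 t$ appears). Term (B) is the pure ergodic-averaging error on the unperturbed flow: by assumption {\bf A6} each summand has $L^p$-norm at most $\varepsilon\tau \cdot \eta(\tau)$, and summing gives $t \cdot \eta(t|\ln\varepsilon|)$. Term (C) is controlled by Lipschitz continuity of $Q^g$ together with the fact that $H^\varepsilon$ is driven only by $O(\varepsilon)$ terms; on each window the $L^p$-variation of $H^\varepsilon$ is $O(\varepsilon\cdot \sqrt{\varepsilon\tau})$ after invoking It\^o isometry for the Brownian piece, Kunita's first inequality together with assumption {\bf A5} for the compensated Poisson piece, and the standard Marcus-correction estimate as in the proof of Lemma \ref{lemma 4.4}; this contribution is absorbed into the main bound.

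The supremum over $s \le t$ is obtained by applying Doob's inequality and Kunita's first inequality to the martingale parts inside (A) and (C), and by monotonicity for the Lebesgue parts; the constant $\exp(k_3 t)$ is picked up at the final step from a Gronwall-type closure, as in the previous lemma. Finally, rearranging the constants gives the stated bound.

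The main obstacle I expect is the control of term (B) uniformly in the (random) starting point $X^{(j)}_{u_j} = Y^\varepsilon_{u_j}$: assumption {\bf A6} provides the rate $\eta$, but one must apply it on the random torus $M_{H(Y^\varepsilon_{u_j})}$ and then integrate over the law of $Y^\varepsilon_{u_j}$ in $L^p$; this requires that $\eta$ is uniform in the starting leaf, which is implicit in {\bf A6} because of the compactness assumption {\bf A4}. A secondary technical point is the interplay between $\tau^\varepsilon$ and the windowed construction: one must stop each window at $\tau^\varepsilon/\varepsilon$ and ensure that large Marcus jumps do not send the trajectory outside of $U_0$ during an average, which is handled by localization on $U_0$ and by the $p$-th moment assumption {\bf A5} on $\nu$ and $\tilde\nu$.
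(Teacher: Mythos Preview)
Your plan matches the paper's proof essentially step for step: the same fast-time partition with window length $\tau=t|\ln\varepsilon|$ and $N_\varepsilon\simeq(\varepsilon|\ln\varepsilon|)^{-1}$, the same auxiliary unperturbed trajectories restarted at $Y^\varepsilon_{u_j}$, and the same three-way decomposition (the paper adds a fourth, trivial remainder term from the last incomplete window). Two small simplifications relative to what you wrote: first, $\delta^g$ is a Lebesgue integral in $r$, so no Doob or Kunita inequalities are needed for the supremum in $s$---the paper simply bounds each piece by moving the supremum inside the absolute value; second, there is no Gronwall closure and no $\exp(k_3 t)$ factor in this lemma---that factor appears only later, in the proof of Theorem~\ref{theorem 4.1}, when the averaging error is fed into the comparison between $H^\varepsilon$ and $\bar H$.
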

\begin{proof}
The main idea is to use the approximate result in Lemma (\ref{lemma 4.4}) on sufficiently small intervals and to apply the ergodicity assumption to replace time average by space average. We refer to Li \cite{Li} for a nice proof in the Brownian case and H\"ogele-Ruffino \cite{Hog}, Gargate-Ruffino \cite{GGI} and H\"ogele-da Costa \cite{Hog2} for the extensions of this proof method. For sufficiently small $\varepsilon>0$ and $t\geqslant 0$ we define the partition
$$
t_0=0<t_1<...<t_{N_\varepsilon}\leqslant \frac{t}{\varepsilon}\wedge \tau^\varepsilon
$$
with the following assignment of increments:
$$
\triangle_\epsilon t =t|\ln\varepsilon|.
$$
The grid points of the partition are given by $t_n^\varepsilon=n\triangle_\varepsilon t$ for $0\leqslant n\leqslant N_\varepsilon$ with $N_\varepsilon=\lfloor (\varepsilon|\ln\varepsilon|)^{-1}\rfloor$ where the bracket function $\lfloor \cdot \rfloor$ denotes the integer part of the value.
\par
Initialy we represent the left hand side as the sum:
\begin{equation}
\int_{0}^{t\wedge \varepsilon\tau^\varepsilon}g(Y_{t/\varepsilon}^{\varepsilon})dr=\varepsilon\int_{0}^{\frac{t}{\varepsilon}\wedge \tau^\varepsilon}g(Y_{r}^\varepsilon)dr
=\varepsilon\sum_{n=0}^{N_\varepsilon-1}\int_{t_n}^{t_{n+1}}g(Y_{r}^\varepsilon)dr+\varepsilon\int_{t_n}^{\frac{t}{\varepsilon}\wedge \tau^\varepsilon}g(Y_{r}^\varepsilon)dr.
\end{equation}
\par
Suppose that $\Psi:=\Psi_t=(\Psi(t,\omega,x),t\in\mathbb{R}^{+})$ the solution flow of the unperturbed stochastic differential equation (\ref{Equation-H}) with initial point $x$ and $\Theta_t$ the shift operator on the canonical probability space, i.e., $\Theta_t(\omega)(-)=\omega(-+t)-\omega(t)$. Then,
\begin{align}\label{Eqnarray-Pi}
|\delta^g(\varepsilon,t)|
\leqslant&\varepsilon|\sum_{n=0}^{N_\varepsilon-1}\int_{t_n}^{t_{n+1}}g(Y_{r}^\varepsilon)dr-g(\Psi_{r-t_n}(\Theta_{t_n}(\omega),Y_{r}^\varepsilon))dr|\notag\\
&+\varepsilon|\sum_{n=0}^{N_\varepsilon-1}\int_{t_n}^{t_{n+1}}g(\Psi_{r-t_n}(\Theta_{t_n}(\omega),Y_{r}^\varepsilon))-\triangle_\varepsilon t Q^g(H^\varepsilon(\varepsilon t_n))dr|\notag\\
&+\varepsilon|\sum_{n=0}^{N_\varepsilon-1}\triangle_\varepsilon t Q^g(H^\varepsilon(\varepsilon t_n))-\int_0^{t\wedge \varepsilon\tau^\varepsilon}Q^g(H^\varepsilon(r))dr|\notag\\
&+\varepsilon|\int_{t_n}^{\frac{t}{\varepsilon}\wedge \tau^\varepsilon}g(Y_{r}^\varepsilon)dr|\notag\\
=&\Pi_1+\Pi_2+\Pi_3+\Pi_4.
\end{align}
%%%%%%%%%%%%%%%%%%%%%%%%%%%%%%%%%%%%%%%%%e1
\par
We proceed showing that the preceding four terms tend to zero uniformly on compact intervals. In the proof below $c$ stands for an unspecified constant. Using the Markov property, Lemma 4.8 and H\"older's inequality,
\begin{align}\label{Eqnarray-e1}
\big(\mathbb{E}\sup_{s\leqslant t}|\Sigma_1|^p\big)^\frac{1}{p}
\leqslant&\varepsilon\sum_{n=0}^{N_\varepsilon-1}\Big(\mathbb{E}\big[\int_{t_n}^{t_{n+1}}\sup_{{t_n}\leqslant s\leqslant r}|g(Y_{r}^\varepsilon)dr-g(\Psi_{r-t_n}(\Theta_{t_n}(\omega),Y_{r}^\varepsilon))|dr\big]^p\Big)^{\frac{1}{p}}\notag\\
\leqslant&\varepsilon\sum_{n=0}^{N_\varepsilon-1}(\triangle_\varepsilon t)^{\frac{p-1}{p}}\Big(\mathbb{E}\big[\int_{t_n}^{t_{n+1}}\sup_{{t_n}\leqslant s\leqslant r}|g(Y_{r}^\varepsilon)dr-g(\Psi_{r-t_n}(\Theta_{t_n}(\omega),Y_{r}^\varepsilon))|^pdr\big]\Big)^{\frac{1}{p}}\notag\\
%\leqslant&\varepsilon\sum_{n=0}^{N_\varepsilon-1}(\triangle_\varepsilon t)\Big(\mathbb{E}\big[\sup_{{t_n}\leqslant s\leqslant r}|g(Y_{r}^\varepsilon)dr-g(\Psi_{r-t_n}(\Theta_{t_n}(\omega),Y_{r}^\varepsilon))|^p\big]\Big)^{\frac{1}{p}}\notag\\
\leqslant&\varepsilon N_\varepsilon (\triangle_\varepsilon t) C_1\varepsilon e^{C_2 \triangle_\varepsilon t}
=\varepsilon \lfloor (\varepsilon|\ln\varepsilon|)^{-1}\rfloor\cdot t|\ln\varepsilon| \cdot C_1 \varepsilon e^{- C_2 t\ln\varepsilon }\notag\\
\leqslant& c t \varepsilon^{1-C_2 t}
\end{align}
\par
We denote $\mu_{H^\varepsilon(\varepsilon t_n)}$ by the invariant measure on the invariant manifold $M_{H^\varepsilon(\varepsilon t_n)}\equiv M_{Y_{t_n}^\varepsilon}$.
Ergodicity assumption ${\bf A6}$ and the Markov property of the flow yield,
\begin{align}\label{Eqnarray-e2}
\big(\mathbb{E}\sup_{s\leqslant t}|\Sigma_2|^p\big)^\frac{1}{p}
\leqslant&\varepsilon\sum_{n=0}^{N_\varepsilon-1}\Big(\mathbb{E}\sup_{s\leqslant t}|\int_{t_n}^{t_{n+1}}g(\Psi_{r-t_n}(\Theta_{t_n}(\omega),Y_{r}^\varepsilon))dr-\triangle_\varepsilon t Q^g(H^\varepsilon(\varepsilon t_n))|^p\Big)^{\frac{1}{p}}\notag\\
\leqslant&\varepsilon\triangle_\varepsilon t\sum_{n=0}^{N_\varepsilon-1}\Big(\mathbb{E}\sup_{s\leqslant t}|\frac{1}{\triangle_\varepsilon t}\int_{t_n}^{t_n+\triangle_\varepsilon t}g(\Psi_{r-t_n}(\Theta_{t_n}(\omega),Y_{r}^\varepsilon))dr
-Q^g(H^\varepsilon(\varepsilon t_n))|^p\Big)^{\frac{1}{p}}\notag\\
\leqslant&\varepsilon N_\varepsilon\triangle_\varepsilon t \sup_n\Big(\mathbb{E}\sup_{s\leqslant t}|\frac{1}{\triangle_\varepsilon t}\int_{t_n}^{t_n+\triangle_\varepsilon t}g(\Psi_{r-t_n}(\Theta_{t_n}(\omega),Y_{r}^\varepsilon))dr\notag\\
&-\int_{M_{H^\varepsilon(\varepsilon t_n)}}g(H^\varepsilon(\varepsilon t_n),z)d\mu_{H^\varepsilon(\varepsilon t_n)}(z)|^p\Big)^{\frac{1}{p}}\notag\\
\leqslant&c\varepsilon N_\varepsilon\triangle_\varepsilon t   \eta(\triangle_\varepsilon t) 
=\varepsilon \lfloor (\varepsilon|\ln\varepsilon|)^{-1}\rfloor\cdot t|\ln\varepsilon| \cdot \eta(t|\ln\varepsilon|)\notag\\
\leqslant&ct\eta(t|\ln\varepsilon|).
\end{align}
\par
Note that $g$ is $C^1$ on $U_0$, both $\sup_{U_0}|g|$ and $\sup_{U_0}|dg|$ are finity. We have the following estimates:
\begin{align}\label{Eqnarray-e3}
\big(\mathbb{E}\sup_{s\leqslant t}|\Sigma_3|^p\big)^\frac{1}{p}
\leqslant&\varepsilon\sum_{n=0}^{N_\varepsilon-1}\triangle_\varepsilon t(\mathbb{E}\sup_{\varepsilon t_n\leqslant r\leqslant \varepsilon t_n+1}|Q^g(H^\varepsilon(\varepsilon t_n))-Q^g(H^\varepsilon(\varepsilon r))|^p)^\frac{1}{p}\notag\\
%\leqslant&c\varepsilon N_\varepsilon \triangle_\varepsilon t \sup_n\sup_{\varepsilon t_n\leqslant r\leqslant \varepsilon t_n+1}|I_r^\varepsilon-I_{t_n}^\varepsilon|\notag\\
\leqslant&c\varepsilon N_\varepsilon \triangle_\varepsilon t \cdot c\varepsilon \exp(c \triangle_\varepsilon t)
  \notag\\
\leqslant& c t \varepsilon^{1-c t},
\end{align}
\par
and 
\begin{eqnarray}\label{Eqnarray-e4}
\big(\mathbb{E}\sup_{s\leqslant t}|\Sigma_4|^p\big)^\frac{1}{p}\leqslant c \varepsilon t|\ln\varepsilon|.
\end{eqnarray}
\par
Consequently, the desired result follows from inequality (\ref{Eqnarray-Pi}), estimates (\ref{Eqnarray-e1}) - (\ref{Eqnarray-e4}), and Minkowski's inequality.
\end{proof}

%%%%%%%%%%%%%%%%%%%%%%%%%%%%%%%%%%%%%%%%%%%%%%%%%%%%%%%Main%%%%%%%%%%%%%%%%%%%%
At last, we present the proof of Theorem \ref{theorem 4.1} based the results of Lemma \ref{lemma 4.4} and Lemma \ref{lemma 4.5}.

\begin{proof}[\textbf{Proof of Theorem \ref{theorem 4.1}.}]
Applying the change of variable formula \cite{KPP} for Marcus SDE (\ref{Eqnarray-perturbation}) and using the completely integrability assumption {\bf A3}, we have for $t<\tau_0\wedge \tau^\varepsilon$, $1\leqslant i \leqslant d$,
\begin{align}
H_i^\varepsilon(t)
=&H_i(Y_0)+\int_0^t\omega^2(V_{i}, K)(Y_{s/\varepsilon}^{\varepsilon})ds \notag\\
&+ \int_0^t\sum_{k=1}^d \omega^2(V_{i}, F_k\circ \pi)(Y_{s/\varepsilon}^{\varepsilon})\circ d\tilde{B}_t^k
+\int_0^t \sum_{k=1}^d \omega^2(V_{i}, G_k\circ \pi)(Y_{s/\varepsilon}^{\varepsilon})\diamond d\tilde{L}_t^k.
\end{align}

For $i$ fixed, we write
\begin{equation}
g_i=\omega^2(V_{i}, K)
\end{equation}
which is $C^1$ on $U_0$. 
Applying (\ref{Eqnarray-error}) to the functions $g_i$, we obtain for any $t<\tau^\varepsilon$,
\begin{equation}
\int_{0}^{t\wedge \tau^\varepsilon}g_i(Y_{s/\varepsilon}^{\varepsilon})ds=\int_{0}^{t\wedge (\tau^\varepsilon/\varepsilon)}Q^{g_i}(H^\varepsilon(s))ds+\delta^{g_i}(\varepsilon,t).
\end{equation}
On the other hand, using the notations of the previous two lemmas, the equation (\ref{Eqnarray-aver}) can be written as
\begin{align}
&{d}\bar{H}_i(t)=Q^{g_i}(\bar{H}_i(t))dt+ \sum_{k=1}^d F_{I,k}(\bar{H}_i(t))\circ d\tilde{B}_t^k+\sum_{k=1}^d G_{I,k}(\bar{H}_i(t))\diamond d\tilde{L}_t^k,\notag\\
&\bar{H}_0(t)=H(Y_0) .\notag
\end{align}
Therefore, for any $t<\tau^\varepsilon$, we have
\begin{align}
|H_i^\varepsilon(t\wedge \tau^\varepsilon)-\bar{H}_i(t\wedge \tau^\varepsilon)|
\leqslant& \int_0^{t\wedge \tau^\varepsilon}|Q^{g_i}(H_i^\varepsilon(s))-Q^{g_i}(\bar{H}_i(s))|ds+\delta(g_i, \varepsilon, t)\notag\\
+& \int_0^{t\wedge \tau^\varepsilon} |F_{I,k}(H_i^\varepsilon(s))-F_{I,k}(\bar{H}_i(s))|\circ d\tilde{B}_t^k \notag\\
+& \int_0^{t\wedge \tau^\varepsilon} |G_{I,k}(H_i^\varepsilon(s))-G_{I,k}(\bar{H}_i(s))|\diamond d\tilde{L}_t^k 
\end{align}
\par
Note that the estimate of the first term is straight forward Lipschitz estimate,
\begin{eqnarray}
 \int_0^{t\wedge \tau^\varepsilon}|Q^{g_i}(H_i^\varepsilon(s))-Q^{g_i}(\bar{H}_i(s))|ds
\leqslant C(g,\varphi)\int_0^{t\wedge \tau^\varepsilon}|H_i^\varepsilon(s))-\bar{H}_i(s)|ds.
\end{eqnarray}
The estimates of the Brownian term and the L\'evy term can be dealt with by Kunita's second inequality \cite[Page 268]{Ap} or other maximal inequalities. The computation for these two terms are very similar to that in the proof of Lemma \ref{lemma 4.4}, and we refer to \cite[Section 5]{Hog2} for a detailed procedure. Finally,
\begin{eqnarray}
 \mathbb{E}\big[ \sup_{s\leqslant t\wedge \tau^\varepsilon} |H_i^\varepsilon(s)-\bar{H}_i(s)|^p\big]
\leqslant C_1 \int_{0}^{t} \mathbb{E}\big[ \sup_{r\leqslant s\wedge \tau^\varepsilon} |H_i^\varepsilon(r)-\bar{H}_i(r)|^p\big]ds+ \mathbb{E}\big[\sup_{s\leqslant t}|\delta(g_i, \varepsilon, t)|^p\big] \notag\\
\end{eqnarray}
\par
By Lemma \ref{lemma 4.5} and Gronwall's inequality, there is a constant $k_3>0$ such that
\begin{align}
\bigg( \mathbb{E}\big[ \sup_{s\leqslant t\wedge \tau^\varepsilon} |H_i^\varepsilon(s)-\bar{H}_i(s)|^p\big] \bigg)^\frac{1}{p}
\leqslant&  \mathbb{E}\big[\sup_{s\leqslant t}|\delta(g_i, \varepsilon, t)|^p\big]^{^\frac{1}{p}}\exp(k_3 t) \notag\\
\leqslant& k_1 t \big(\varepsilon^{1-k_2 t} +\eta(t|\ln\varepsilon|)\big)\exp(k_3 t).
\end{align}

For the second part of the theorem, we have the following estimate by the definition of $\tau^\varepsilon$, $\tau_\delta$ and Chebychev's inequality,
\begin{eqnarray}
\mathbb{P}(\tau^\varepsilon<\tau_\delta)
&\leqslant& \mathbb{P}(\sup_{s\leqslant \tau^\varepsilon\wedge\tau_\delta} |\bar{H}(s)-H^\varepsilon(s)|>\delta) \notag\\
&\leqslant&\delta^{-p}\mathbb{E}\big[ \sup_{s\leqslant \tau^\varepsilon\wedge\tau_\delta}|\bar{H}(s)-H^\varepsilon(s)|^p\big] \notag\\
&\leqslant& k_4\delta^{-p} t^p \big(\varepsilon^{1-k_2 t} +\eta(t|\ln\varepsilon|)\big)^p\exp(k_5 t).
\end{eqnarray}
\end{proof}

\subsection{An example: Perturbed stochastic harmonic oscillator with L\'{e}vy noise} \label{section 4.4}
\noindent
In this subsection, let's present a simple illustrative example for the above averaging principle of integrable stochastic Hamiltonian system with L\'{e}vy noise. We write $(q, p)= (q_1, . . . ,q_d, p_1, . . . , p_d)$ as canonical coordinates, and there is an important class of Hamiltonian functions on $\mathbb{R}^{2n}$ of the form $H(q,p)=\frac{1}{2}|p|^2+V(q)$, i.e. Hamiltonian $H$ is the sum of kinetic, $T =\frac{1}{2}|p|^2=\frac{1}{2}\sum_{i=1}^dp_i^2$ and potential, $V (q)$, energies. Furthermore, if $V$ is quadratic, e.g. $V (q)=\frac{1}{2}\varpi|q|^2$ with $\varpi$ a frequency,  then we have the linear harmonic oscillator. Given Hamiltonian functions as follow,
\begin{eqnarray}
H_1&=&\frac{1}{2}\sum_{i=1}^dp_i^2+\frac{1}{2}\sum_{i=1}^d\varpi_i^2p_i^2, \notag\\
H_k&=&\frac{1}{2}\frac{p_k^2}{\varpi_k}+\frac{1}{2}\varpi_kp_k^2, ~~k=2,...,d,\notag
\end{eqnarray}
and a smooth function $H_0$ commuting with all $H_k$, $k=1,...,d$, i.e.
$$\{H_0, H_k\}=\sum_{i=1}^d\Big(  \frac{\partial H_0}{\partial p_i}\frac{\partial H_k}{\partial q_i} - \frac{\partial H_0}{\partial q_i}\frac{\partial H_k}{\partial p_i}  \Big)=0,$$
we have
\begin{eqnarray}
dq_i(t)&=&\frac{\partial H_0}{\partial p_i}dt+\sum_{i=1}^d\frac{\partial H_k}{\partial p_i}\circ dB_t^k+\sum_{i=1}^d\frac{\partial H_k}{\partial p_i}\diamond d{L}_t^k, \\
dp_i(t)&=&-\frac{\partial H_0}{\partial q_i}dt-\sum_{i=1}^d\frac{\partial H_k}{\partial q_i}\circ dB_t^k-\sum_{i=1}^d\frac{\partial H_k}{\partial q_i}\diamond d{L}_t^k,
\end{eqnarray}
which is an integrable stochastic Hamiltonian system with  $\alpha$-stable L\'{e}vy noise. Let $J=\begin{bmatrix}\begin{smallmatrix}0 & I \\ -I &  0\end{smallmatrix}\end{bmatrix}$ be a $2d \times 2d$ antisymmetric matrix, which is called Poisson matrix, this system is equivalent to
\begin{eqnarray}\label{Eqnarray-example}
dX_t=J\nabla H_0(X_t)dt + \sum_{i=1}^dJ\nabla H_k(X_t)\circ dB_t^k +  \sum_{i=1}^dJ\nabla H_k(X_t)\diamond d{L}_t^k.
\end{eqnarray}
For $M_h =\{x \in M: H_k(x) = h_k, ~k=1,2,..d\}$,
if we take an action-angle coordinates change $\varphi^{-1}: U_0 \to \mathbb{T}^d\times D, (q,p)\mapsto (\theta,I)$,
\begin{eqnarray}\label{Example-AA}
q_i=\sqrt{\frac{2I_i}{\varpi_i}}\cos \theta_i,~p_i=\sqrt{2\varpi_iI_i}\sin \theta_i,
\end{eqnarray}
then the induced Hamiltonians $
H_k^{\prime}=H_k(\varphi(\theta,I))=\left\{
\begin{array}{rl}
\sum_{i=1}^d \varpi_iI_i, ~ k=1\\
I_k,~k=2,...,d
\end{array}
\right.
$ on $\mathbb{T}^d\times D$ satisfy,
\begin{eqnarray}
\dot{\theta}_k^i&=&\frac{\partial H_k^{\prime}}{\partial I_i}=:\omega_k^i(I)=
\left\{
\begin{array}{rl}
&\varpi_i, ~ k=1;\\
&1,~~~k=2,...,d~ \& ~i=k;\\
&0, ~~~otherwise.\textsc{}
\end{array}
\right. \notag\\
\dot{I}_k^i&=&-\frac{\partial H_k^{\prime}}{\partial \theta_i}=0.\notag
\end{eqnarray}
Next, we investigate the effective behavior of a small transversal perturbation of order $\varepsilon$ to this system. For simplicity, we consider the case on $\mathbb{R}^4$ with $\varpi =1$ and $L_t$ having second moments,
\begin{eqnarray}\label{Example-U}
d\begin{pmatrix}q_1 \\ q_2 \\ p_1\\p_2\end{pmatrix}
=\begin{pmatrix}p_1 &0\\ p_2&p_2 \\ -q_1&0\\-q_2&-q_2\end{pmatrix}\circ d\begin{pmatrix}B_t^1 \\ B_t^2\end{pmatrix}
+\begin{pmatrix}p_1 &0\\ p_2&p_2 \\ -q_1&0\\-q_2&-q_2\end{pmatrix}\diamond d\begin{pmatrix}{L}_t^1 \\ {L}_t^2\end{pmatrix}.
\end{eqnarray}
Take the perturbation vectors to be $\varepsilon K=\big(0,\varepsilon q_2/(q_2^2+p_2^2),0,0 \big)^T$, $\varepsilon(E, O)^T\dot{\tilde{B}}_t$ and $\varepsilon(E, O)^T\dot{\tilde{L}}_t$, where $E$ is the  identity matrix, $O$ is the zero matrix and $\tilde{L}_t$ is a pure jump L\'evy motion with four-order moments. 
By action-angle coordinates change (\ref{Example-AA}), we have, with $\Lambda=\begin{bmatrix}\begin{smallmatrix}1 & 1 \\ 1 &  0\end{smallmatrix}\end{bmatrix}$,
$$%\begin{eqnarray}
d\begin{pmatrix}\theta_1 \\ \theta_2 \\ I_1\\I_2\end{pmatrix}
=\begin{pmatrix}\Lambda \\ O\end{pmatrix} d\begin{pmatrix}B_t^1 \\ B_t^2\end{pmatrix}
+\begin{pmatrix}\Lambda \\ O\end{pmatrix} d\begin{pmatrix}{L}_t^1 \\ {L}_t^2\end{pmatrix}
+\varepsilon\begin{pmatrix}0 \\ \frac{1}{2I_2}\sin\theta_2\cos\theta_2 \\ 0\\-\cos^2\theta_2\end{pmatrix}dt+\varepsilon \begin{pmatrix}E \\ O\end{pmatrix} d\begin{pmatrix}\tilde{B}_t^1 \\ \tilde{B}_t^2\end{pmatrix}+\varepsilon \begin{pmatrix}E \\ O\end{pmatrix} d\begin{pmatrix}\tilde{L}_t^1 \\ \tilde{L}_t^2\end{pmatrix}.
$$%\end{eqnarray}
%\begin{eqnarray}
%d\begin{pmatrix}\theta_1 \\ \theta_2 \\ I_1\\I_2\end{pmatrix}
%=\begin{pmatrix}1 &0\\ 1&1 \\ 0&0\\0&0\end{pmatrix} d\begin{pmatrix}B_t^1 \\ B_t^2\end{pmatrix}
%+\begin{pmatrix}1 &0\\ 1&1 \\ 0&0\\0&0\end{pmatrix} d\begin{pmatrix}{L}_t^1 \\ {L}_t^2\end{pmatrix}
%+\varepsilon\begin{pmatrix}0 \\ \frac{1}{2I_2}\sin\theta_2\cos\theta_2 \\ 0\\-\cos^2\theta_2\end{pmatrix}dt+\varepsilon d\begin{pmatrix}\tilde{B}_t^1 \\ \tilde{B}_t^2\end{pmatrix}+\varepsilon d\begin{pmatrix}\tilde{L}_t^1 \\ \tilde{L}_t^2\end{pmatrix}.\notag
%\end{eqnarray}
For unperturbed system, it is easy to get fundamental solution with initial condition $(q_0,p_0)=\varphi(\theta_0,I_0)$: $q_t=\sqrt{2I_0}\cos(\Lambda(B_t+L_t)), p_t=\sqrt{2I_0}\sin(\Lambda(B_t+L_t))$ with $\Lambda=\begin{bmatrix}\begin{smallmatrix}1 & 1 \\ 1 &  0\end{smallmatrix}\end{bmatrix}$. 
Note that
\begin{eqnarray}
g_i:=\omega^2(V_i, K)=V_i^TJK%=\begin{pmatrix}p_1 & p_2 & -q_1&-q_2\end{pmatrix}\begin{pmatrix} 0&0&-1&0\\0&0&0&-1\\-1&0&0&0\\0&-1&0&0\\\end{pmatrix}\begin{pmatrix}0 \\ \frac{q_2}{q_2^2+p_2^2} \\ 0\\0\end{pmatrix}
=\frac{q_2^2}{q_2^2+p_2^2} ~\Longrightarrow~ \tilde{g}_i=\cos^2 \theta_2 ,~i=1,2.\notag
\end{eqnarray}
We obtain
$$
Q^g(h_i)=\frac{1}{(2\pi)^2}\int_0^{2\pi}\int_0^{2\pi} \cos^2 \theta_2 d\theta_1d\theta_2=\frac{1}{2}.
$$
We verify that
$\frac{1}{t} \int_0^t g_i (q_s,p_s)ds \to Q^g(h_i)$ in $L^2$, as $t \to \infty$, with a rate of convergence $\eta(t)=\frac{1}{\sqrt{t}}$ in the Appendix.
Therefore, the transversal system stated in Theorem \ref{theorem 4.1} is $\bar{H}_i(t)=\frac{t}{2}$. The result guarantees that, on the accelerated time scale $\frac{t}{\varepsilon}$, $H_i^\varepsilon(t)$ has a local behavior close to $\frac{t}{2}$ in the sense that
\begin{eqnarray}
\bigg( \mathbb{E}\Big[ \sup_{s\leqslant t\wedge \tau^\varepsilon} \big|H_i^\varepsilon(s)-\frac{t}{2}\big|^2\Big] \bigg)^\frac{1}{2}
\leqslant k_1 t \big(\varepsilon^{1-k_2 t} +\frac{t}{2}|\ln\varepsilon|\big)\exp(k_3 t)
\end{eqnarray}
tends to $0$ when $\varepsilon\to 0$, for any fixed $t$ and the constant $k_1,k_2,k_3>0$.

\section*{Appendix: Proofs of Theorem \ref{theorem 3.1} -\ref{theorem 3.2} and Calculations of Example \ref{section 4.4}}

We now prove theorem \ref{theorem 3.1} and theorem \ref{theorem 3.2} which are based on the formula of change of variables in differential forms.

\begin{proof}[\textbf{Proof of Theorem \ref{theorem 3.1}.}] \emph{\textbf{}}
\\
Noticing that
\begin{eqnarray}
dP\wedge dQ &=&\sum_{i=1}^{n}dP^i\wedge dQ^i \notag\\
&=&\sum_{i=1}^{n}\sum_{l=r+1}^{n}\sum_{r=1}^{n}\left[
(\frac{\partial P^i}{\partial p^r} \frac{\partial Q^i}{\partial p^l} - \frac{\partial P^i}{\partial p^l} \frac{\partial Q^i}{\partial p^r})dp^r\wedge dp^l+
(\frac{\partial P^i}{\partial q^r} \frac{\partial Q^i}{\partial q^l} - \frac{\partial P^i}{\partial q^l} \frac{\partial Q^i}{\partial q^r})dq^r\wedge dq^l
\right]\notag\\
&& +\sum_{i=1}^{n}\sum_{l=1}^{n}\sum_{r=1}^{n}(\frac{\partial P^i}{\partial p^r} \frac{\partial Q^i}{\partial q^l} - \frac{\partial P^i}{\partial q^l} \frac{\partial Q^i}{\partial p^r})dp^r\wedge dq^l,\notag
\end{eqnarray}
we infer that the phase flow of (\ref{Equation-q1} - \ref{Equation-p1}) preserves symplectic structure if and only if
\begin{equation}\label{Equation-QP0}
\left\{
\begin{array}{rl}
\sum_{i=1}^{n}\frac{D(P^i,Q^i)}{D(p^r,p^l)}=0, ~~ r\neq l, \\
\sum_{i=1}^{n}\frac{D(P^i,Q^i)}{D(q^r,q^l)}=0, ~~ r\neq l, \\
\sum_{i=1}^{n}\frac{D(P^i,Q^i)}{D(p^r,q^l)}=\delta_{rl}, ~r,l=1,...,n. \\
\end{array}
\right.
\end{equation}
Clearly,
$$\frac{D(P^i(t_0),Q^i(t_0))}{D(p^r,p^l)}=\frac{D(p^i,q^i)}{D(p^r,p^l)}=0, \; \;  \frac{D(P^i(t_0),Q^i(t_0))}{D(q^r,q^l)}=\frac{D(p^i,q^i)}{D(q^r,q^l)}=0,$$
$$\frac{D(P^i(t_0),Q^i(t_0))}{D(p^r,q^l)}=\frac{D(p^i,q^i)}{D(p^r,q^l)}=\delta_{rl}.$$
Therefore, (\ref{Equation-QP0}) is fulfilled if and only if
\begin{equation}\label{Equation-QP1}
\sum_{i=1}^{n}d\frac{D(P^i(t),Q^i(t))}{D(p^r,p^l)}=\sum_{i=1}^{n}d\frac{D(P^i(t),Q^i(t))}{D(q^r,q^l)}=\sum_{i=1}^{n}d\frac{D(P^i(t),Q^i(t))}{D(p^r,q^l)}=0.
\end{equation}
Introduce the notation
$$
P_p^{ir}=\frac{\partial P^i}{\partial p^r}, \; \;  P_q^{ir}=\frac{\partial P^i}{\partial q^r}, \; \;
Q_p^{ir}=\frac{\partial Q^i}{\partial p^r},  \; \;  Q_q^{ir}=\frac{\partial P^i}{\partial q^r}.
$$
For a fixed $r$ , by calculating at $(P,Q)$ with $P=P(t)=(P^1(t;t_0,p,q),...,P^n(t;t_0,p,q))$ and $Q=Q(t)=(Q^1(t;t_0,p,q),...,Q^n(t;t_0,p,q))$ which is a solution to systems (\ref{Equation-q1} - \ref{Equation-p1}), we obtain $P_p^{ir}, Q_p^{ir}, i=1,...,n$, satisfy the following system of SDEs:
\begin{eqnarray}\label{Eqnarray-SDE1}
dP_p^{ir}&=& \sum_{j=1}^{n}(\frac{\partial f^i}{\partial p^j}P_p^{jr}+\frac{\partial f^i}{\partial q^j}Q_p^{jr})dt+ \sum_{k=1}^{d}\sum_{j=1}^{n}(\frac{\partial \sigma_k^i}{\partial p^j}P_p^{jr}+\frac{\partial \sigma_k^i}{\partial q^j}Q_p^{jr})\diamond dL^k,~ P_p^{ir}(t_0)=\delta_{ir},\notag\\
dQ_p^{ir}&=& \sum_{j=1}^{n}(\frac{\partial g^i}{\partial p^j}P_p^{jr}+\frac{\partial g^i}{\partial q^j}Q_p^{jr})dt+ \sum_{k=1}^{d}\sum_{j=1}^{n}(\frac{\partial \gamma_k^i}{\partial p^j}P_p^{jr}+\frac{\partial \gamma_k^i}{\partial q^j}Q_p^{jr})\diamond dL^k,~ Q_p^{ir}(t_0)=0,\notag\\
\end{eqnarray}
where
\begin{equation}\label{Equation-q1q}
f(Q,P)=\frac{\partial H}{\partial P}(Q,P), \; \;  \sigma_k (Q,P) =\frac{\partial H_k}{\partial P} (Q,P),
\end{equation}
\begin{equation}\label{Equation-p1p}
g(Q,P)=-\frac{\partial H}{\partial Q}(Q,P), \; \;   \gamma_k (Q,P) =-\frac{\partial H_k}{\partial Q} (Q,P),
\end{equation}
for $k = 1, . . . , m.$
\par
Then, we get
\begin{eqnarray}
dP_p^{ir}(t)Q_p^{il}(t)&=&\sum_{j=1}^{n}\left[
(\frac{\partial f^i}{\partial p^j}P_p^{jr}+\frac{\partial f^i}{\partial q^j}Q_p^{jr})Q_p^{il}+(\frac{\partial g^i}{\partial p^j}P_p^{jr}+\frac{\partial g^i}{\partial q^j}Q_p^{jr})P_p^{ir}
\right]dt\notag\\
&&+\sum_{k=1}^{m}\sum_{j=1}^{n}\left[
(\frac{\partial \sigma_k^i}{\partial p^j}P_p^{jr}+\frac{\partial \sigma_k^i}{\partial q^j}Q_p^{jr})Q_p^{il}+(\frac{\partial \gamma_k^i}{\partial p^j}P_p^{jl}+\frac{\partial \gamma_k^i}{\partial q^j}Q_p^{jl})P_p^{ir}
\right]\diamond dL^k.\notag
\end{eqnarray}
Similarly, we can also calculate $dP_p^{il}(t)Q_p^{ir}(t)$, then
\begin{eqnarray}
\sum_{i=1}^{n}d\frac{D(P^i(t),Q^i(t))}{D(p^r,p^l)}=\sum_{i=1}^{n}\left[
\sum_{j=1}^{n}\Xi_1dt+\sum_{k=1}^{m}\sum_{j=1}^{n}\Xi_2\diamond dL^k
\right],
\end{eqnarray}
where
\begin{eqnarray}
\Xi_1&=& \frac{\partial f^i}{\partial p^j}P_p^{jr}Q_p^{il}+\frac{\partial f^i}{\partial q^j}Q_p^{jr}Q_p^{il}+\frac{\partial g^i}{\partial p^j}P_p^{jl}P_p^{ir}+\frac{\partial g^i}{\partial q^j}Q_p^{jl}P_p^{ir}\notag\\
&&-\frac{\partial f^i}{\partial p^j}P_p^{jl}Q_p^{ir}-\frac{\partial f^i}{\partial q^j}Q_p^{jl}Q_p^{ir}-\frac{\partial g^i}{\partial p^j}P_p^{jr}P_p^{il}-\frac{\partial g^i}{\partial q^j}Q_p^{jr}P_p^{il},\notag\\
\Xi_2&=& \frac{\partial \sigma_k^i}{\partial p^j}P_p^{jr}Q_p^{il}+\frac{\partial \sigma_k^i}{\partial q^j}Q_p^{jr}Q_p^{il}+\frac{\partial \gamma_k^i}{\partial p^j}P_p^{jl}P_p^{ir}+\frac{\partial \gamma_k^i}{\partial q^j}Q_p^{jl}P_p^{ir}\notag\\
&&- \frac{\partial \sigma_k^i}{\partial p^j}P_p^{jl}Q_p^{ir}-\frac{\partial \sigma_k^i}{\partial q^j}Q_p^{jl}Q_p^{ir}-\frac{\partial \gamma_k^i}{\partial p^j}P_p^{jr}P_p^{il}-\frac{\partial \gamma_k^i}{\partial q^j}Q_p^{jr}P_p^{il}.\notag
\end{eqnarray}
It is not difficult to find out that, a sufficient condition of $\Xi_1=0$ is
\begin{equation}\label{Equation-sc1}
\frac{\partial f^i}{\partial p^j}=-\frac{\partial g^j}{\partial q^i},~ \frac{\partial f^i}{\partial q^j}=\frac{\partial f^j}{\partial q^i}, ~\frac{\partial g^i}{\partial p^j}=\frac{\partial g^j}{\partial p^i},
\end{equation}
and a sufficient condition of $\Xi_2=0$ is
\begin{equation}\label{Equation-sc2}
\frac{\partial \sigma_k^i}{\partial p^j}=-\frac{\partial \gamma_k^j}{\partial q^i},~ \frac{\partial \sigma_k^i}{\partial q^j}=\frac{\partial \sigma_k^j}{\partial q^i}, ~\frac{\partial \gamma_k^i}{\partial p^j}=\frac{\partial \gamma_k^j}{\partial p^i}.
\end{equation}
Noticing that relations (\ref{Equation-q1q} - \ref{Equation-p1p}) imply (\ref{Equation-sc1} - \ref{Equation-sc2}), we obtain ${\sum_{i=1}^{n}d\frac{D(P^i(t),Q^i(t))}{D(p^r,p^l)}=0}$. Similarly, we prove that the conditions  (\ref{Equation-q1q} - \ref{Equation-p1p}) ensure the other two terms of (\ref{Equation-QP1}) as well. This competes the proof.
\end{proof}

%\noindent{\bf Remark}
% Just like the situation of Gaussian noise, we call the systems (\ref{Equation-q1})-(\ref{Equation-p1}) and original system (\ref{Equation-1}) a stochastic Hamiltonian system with L\'{e}vy noise. In fact, it includes the case of stochastic Hamiltonian system with Brownian noise, if we take the generating triplet of  the L\'evy motions as $(0,I,0)$ where $I$ is the  identity matrix.
%In fact, (\ref{Equation-qp}) is the intrinsic characteristic of such a stochastic Hamiltonian system, and from the proof abovementioned, it is equivalent to $B^TJB=J$, where $B=\frac{D(P(t),Q(t))}{D(p,q)}$.

\begin{proof}[\textbf{Proof of Theorem \ref{theorem 3.3}.}] \emph{\textbf{}}
\\
We calculate  the derivatives of $\mathcal{S}$ with respect to $Q_0$ and $Q_1$:
\begin{eqnarray}
\frac{\partial\mathcal{S}}{\partial Q_0}
&=&\int_{t_0}^{t_1}\Big(
\frac{\partial L}{\partial Q} \frac{\partial Q}{\partial Q_0} + \frac{\partial L}{\partial \dot{Q}} \frac{\partial \dot{Q}}{\partial Q_0}+
\frac{\partial L}{\partial P} \frac{\partial P}{\partial Q_0} + \frac{\partial L}{\partial \dot{P}} \frac{\partial \dot{P}}{\partial Q_0}
\Big)dt \notag\\
&&-\sum_{k=1}^d\int_{t_0}^{t_1}\Big(\frac{\partial H_k}{\partial Q} \frac{\partial q}{\partial Q_0}+\frac{\partial H_k}{\partial P} \frac{\partial P}{\partial Q_0}\Big) \diamond d{L}^k(t) \notag\\
&=&\left[ \frac{\partial L}{\partial \dot{Q}} \frac{\partial Q}{\partial Q_0}\right]_{t_0}^{t_1}+\int_{t_0}^{t_1}\Big(
\frac{\partial L}{\partial Q}- \frac{d}{dt}\frac{\partial L}{\partial \dot{Q}} -\sum_{k=1}^d\frac{\partial H_k}{\partial Q} \diamond \dot{L}^k(t)
\Big)\frac{\partial Q}{\partial Q_0}dt \notag\\
&&+\left[ \frac{\partial L}{\partial \dot{P}} \frac{\partial P}{\partial Q_0}\right]_{t_0}^{t_1}+\int_{t_0}^{t_1}\Big(
\frac{\partial L}{\partial P}- \frac{d}{dt}\frac{\partial L}{\partial \dot{P}} -\sum_{k=1}^d\frac{\partial H_k}{\partial P} \diamond \dot{L}^k(t)
\Big)\frac{\partial p}{\partial Q_0}dt \notag\\
&=&-P_0^T,
\end{eqnarray}
where the last equality follows from the stochastic Lagrange equations (\ref{Eqnarray-Lagrange1_stoch} - \ref{Eqnarray-Lagrange2_stoch}) and the Legendre transform $P=\frac{\partial L}{\partial \dot{Q}} $.
\par
Similarly, we have
\begin{eqnarray}
\frac{\partial\mathcal{S}}{\partial Q_1}=-P_1^T.
\end{eqnarray}
Therefore,
\begin{equation}
d \mathcal{S}=-P_0^TdQ_0+P_1^TdQ_1.
\end{equation}
Moreover,
\begin{eqnarray}
&&dp_1\wedge dQ_1=d(\frac{\partial\mathcal{S}}{\partial Q_1})\wedge dQ_1=\frac{\partial^2\mathcal{S}}{\partial Q_1\partial Q_0}dQ_0\wedge dQ_1,\\
&&dp_0\wedge dQ_0=d(-\frac{\partial\mathcal{S}}{\partial Q_0})\wedge dQ_0=\frac{\partial^2\mathcal{S}}{\partial Q_0\partial Q_1}dQ_0\wedge dQ_1.
\end{eqnarray}
Smoothness of  $L$ and the $H_k (k = 1,...,d)$ in $\mathcal{S}$ ensures that $\frac{\partial^2\mathcal{S}}{\partial Q_1\partial Q_0}=\frac{\partial^2\mathcal{S}}{\partial Q_0\partial Q_1}$, which implies
\begin{equation}
dP_1\wedge dQ_1=dP_0\wedge dQ_0.
\end{equation}
The proof is thus complete.
\end{proof}

\begin{proof}[\textbf{Detailed Calculations of Example \ref{section 4.4}.}] \emph{\textbf{}}
\\
Recall that, by L\'evy-Khintchine formula \cite{Ap,Duan} the characteristic funtion for L\'evy motion  in $\mathbb{R}^d$ is
%$$\mathbb{E}e^{i \langle u, L_t \rangle}=e^{t \zeta(u)},~u\in \mathbb{R}^d$$where$$\zeta(u)=\int_{\mathbb{R}^d\setminus \{ 0 \}} [e^{i \langle u,z \rangle}-1-i\mathbf{ 1}_{\{|z|<1\}}\langle u,z \rangle] \nu_\alpha(dz)$$
$$
\mathbb{E}e^{i \langle u, L_t \rangle}=e^{t\eta_0(u)},~u\in \mathbb{R}^d,
$$
where $\eta_0(u)=\int_{\mathbb{R}^d\setminus \{ 0 \}} \big[ e^{iu\cdot z} -1-i\mathbf{ 1}_{\{|z|<1\}}u\cdot z\big]\nu(dz)$ whose real part $\Re \eta_0 \leqslant 0$.
And the characteristic funtion for standard Brownian motion in $\mathbb{R}^d$ is
$$
\mathbb{E}e^{i \langle u, B_t \rangle}=e^{-\frac{1}{2}t\langle u, Iu \rangle}=e^{-\frac{1}{2}t|u|^2},~u\in \mathbb{R}^d.
$$
Therefore,
\begin{eqnarray}\label{Eqnarray-moment}
&&\mathbb{E}\Big[\frac{1}{t}\int_0^t g_i (q_s,p_s)ds \Big]
=\mathbb{E}\Big[\frac{1}{t} \int_0^t \tilde{g}_i (\theta_s,I_s)ds \Big]
=\mathbb{E}\Big[\frac{1}{t} \int_0^t \cos^2 (\langle u, B_s + L_s \rangle) \Big]ds\notag\\
&&%=\frac{1}{t} \int_0^t \mathbb{E}\cos^2 (\langle u, B_s +L_s \rangle)ds
=\frac{1}{2t} \int_0^t \mathbb{E}\cos2 (\langle u, B_s + L_s \rangle) ds+\frac{1}{2}
%&&=\frac{1}{2t} \int_0^t \Re \mathbb{E}e^{ 2i(\langle u, B_s + L_s \rangle)} ds+\frac{1}{2}
=\frac{1}{2t} \int_0^t \Re \mathbb{E}e^{ i\langle 2u, B_s \rangle}\Re\mathbb{E}e^{i \langle 2u, L_s \rangle} ds+\frac{1}{2}\notag\\
&&=\frac{1}{2t} \int_0^t e^{-s(\frac{1}{2}|2u|^2-\Re\eta_0(2u))} ds+\frac{1}{2}
%=:\frac{1}{2t} \int_0^t e^{-As} ds+\frac{1}{2}
=\frac{1}{2t} \frac{1}{A}(1-e^{-At})+\frac{1}{2}.
\end{eqnarray}
%\begin{eqnarray}
%\mathbb{E}\Big[\frac{1}{t}\int_0^t g_i (q_s,p_s)ds \Big]
%=\mathbb{E}\Big[\frac{1}{t} \int_0^t \tilde{g}_i (\theta_s,I_s)ds \Big]
%=\mathbb{E}\Big[\frac{1}{t} \int_0^t \cos^2 (\sum_{j=1}^2B_s^j+\sum_{j=1}^2L_s^j) \Big]ds\notag\\
%=\mathbb{E}\Big[\frac{1}{t} \int_0^t \cos^2 (\langle u, B_t \rangle+\langle u, L_t \rangle) \Big]ds\notag\\
%=\mathbb{E}\Big[\frac{1}{t} \int_0^t \cos^2 (\langle \left(\begin{array}{rl}1\\1\textsc{}\end{array}\right), \left(\begin{array}{rl}B_t^1\\B_t^2\textsc{}\end{array}\right) \rangle+\langle \left(\begin{array}{rl}1\\1\textsc{}\end{array}\right), \left(\begin{array}{rl}L_t^1\\L_t^2\textsc{}\end{array}\right) \rangle) \Big]ds\notag\\
%=\frac{1}{t} \int_0^t \mathbb{E}\cos^2 (\sum_{j=1}^2B_s^j+\sum_{j=1}^2L_s^j) ds
%=\frac{1}{2t} \int_0^t \mathbb{E}\cos 2(\sum_{j=1}^2B_s^j+\sum_{j=1}^2L_s^j) ds-\frac{1}{2}~~~~~~~\notag\\
%=\frac{1}{2t} \int_0^t \Re \mathbb{E}e^{ 2i(\sum_{j=1}^2B_s^j+\sum_{j=1}^2L_s^j)} ds-\frac{1}{2}
%=\frac{1}{2t} \int_0^t \Re \mathbb{E}\Big[ \prod_{j=1}^2e^{ 2iB_s^j} \cdot  \prod_{j=1}^2e^{ 2iL_s^j}\Big] ds-\frac{1}{2} \notag\\
%= \frac{1}{2t} \int_0^t \prod_{j=1}^2 \mathbb{E}e^{ 2iB_s^j} \Big( \prod_{j=1}^2 \Re \mathbb{E} e^{ 2iL_s^j} - \prod_{j=1}^2 \Im \mathbb{E} e^{ 2iL_s^j} \Big) ds-\frac{1}{2}~~~~~~~~~~~~~~~~~~~~~~~~~~~~~~~~\notag\\
%=\frac{1}{2t} \int_0^t e^{ -4t} \Big( \Re e^{ \zeta_1(2)}\Re e^{ \zeta_2(2)} - \Im e^{ \zeta_1(2)} \Im e^{ \zeta_2(2)}\Big) ds-\frac{1}{2}~~~~~~~~~~~~~~~~~~~~~~~~~~~~~~~~~~~\notag
%\end{eqnarray}
Here $u=(1,1)^T\in \mathbb{R}^2$ and $A=\frac{1}{2}|2u|^2-\Re\eta_0(2u)>0$. Hence, as $t$ goes to $\infty$, the expectation is equal to $\frac{1}{2}$ eventually. Next, we calculate the secondary moment as following,
\begin{eqnarray}\label{Eqnarray-2moment}
&&\mathbb{E}\Big[\big|\frac{1}{t}\int_0^t g_i (q_s,p_s)ds\big|^2 \Big]
=\mathbb{E}\Big[\frac{1}{t^2} \big|\int_0^t \cos^2 (\langle u, B_s + L_s \rangle)ds \big|^2 \Big]\notag\\
%&&=\mathbb{E}\Big[\frac{1}{t^2} \iint_{[0,t]^2} \cos^2 (\langle u, B_s + L_s \rangle)\cos^2 (\langle u, B_r + L_r \rangle)ds dr \Big]\notag\\
&&=\frac{2}{t^2}\int_0^t\int_0^r \mathbb{E}\Big[\cos^2 (\langle u, B_s + L_s \rangle)\cos^2 (\langle u, B_r + L_r \rangle) \Big]ds dr \notag\\
%&&=\frac{1}{4t^2}\int_0^t\int_0^r \mathbb{E}\Big[\cos2 (\langle u, (B_s + L_s)+ (B_r+ L_r) \rangle)+\cos2 (\langle u, (B_s + L_s)- (B_r+ L_r) \rangle)\notag\\
%&&~~~~~~~~~~~~~~~~~~~~~~+2\cos2 (\langle u, (B_s + L_s) \rangle)+2\cos2 (\langle u, (B_r+ L_r) \rangle)+2\Big] ds dr \notag\\
&&=\frac{1}{4t^2}\int_0^t\int_0^r \mathbb{E}\Big[\Re e^{ i \langle 2u, (B_s + L_s)+ (B_r+ L_r) \rangle}+ \Re e^{i \langle 2u, (B_s + L_s)- (B_r+ L_r) \rangle}\notag\\
&&~~~~~~~~~~~~~~~~~~~~~~+2\Re e^{i \langle 2u, (B_s + L_s \rangle}+ 2\Re e^{i \langle 2u, (B_r+ L_r) \rangle}+2\Big] ds dr \notag\\
%&&=\frac{1}{4t^2}\int_0^t\int_0^r \mathbb{E}\Big[\Re e^{ i \langle 2u, (2B_s + B_r-B_s)+ (2L_s+ L_r-L_s) \rangle}+ \Re e^{i \langle 2u, (B_s -B_r)+ (L_s-L_r) \rangle}\notag\\
%&&~~~~~~~~~~~~~~~~~~~~~~+2\Re e^{i \langle 2u, (B_s + L_s \rangle}+ 2\Re e^{i \langle 2u, (B_r+ L_r) \rangle}+2\Big] ds dr \notag\\
&&=\frac{1}{4t^2}\int_0^t\int_0^r \Big[ \mathbb{E}e^{ i \langle 4u, B_s\rangle} \mathbb{E}e^{ i \langle 2u, B_r-B_s\rangle}\mathbb{E}e^{ i \langle 4u, L_s\rangle} \mathbb{E}e^{ i \langle 2u, L_r-L_s\rangle}+\mathbb{E}e^{ i \langle 2u, B_r-B_s\rangle}\mathbb{E}e^{ i \langle 2u, L_r-L_s\rangle}\notag\\
&&~~~~~~~~~~~~~~~~~~~~~~+2\mathbb{E}e^{ i \langle 2u, B_s\rangle}\mathbb{E}e^{ i \langle 2u, L_s\rangle}+2 \mathbb{E}e^{ i \langle 2u, B_r\rangle}\mathbb{E}e^{ i \langle 2u, L_r\rangle}+2 \Big]ds dr \notag\\
%&&=\frac{1}{4t^2}\int_0^t\int_0^r \Big[ e^{-\frac{1}{2}s|4u|^2} e^{-\frac{1}{2}(r-s)|2u|^2}e^{-C_\alpha s|4u|^\alpha} e^{-C_\alpha (r-s)|2u|^\alpha}+e^{-\frac{1}{2}(r-s)|2u|^2}e^{-C_\alpha (r-s)|2u|^\alpha}\notag\\
%&&~~~~~~~~~~~~~~~~~~~~~~+2e^{-\frac{1}{2}s|2u|^2}e^{-C_\alpha s|2u|^\alpha}+2 e^{-\frac{1}{2}r|2u|^2}e^{-C_\alpha r|2u|^\alpha}+2 \Big]ds dr \notag\\
&&=\frac{1}{4t^2}\int_0^t\int_0^r \Big[ e^{-(\frac{1}{2}|2u|^2-\Re\eta_0(2u))r-(\frac{1}{2}|4u|^2-\frac{1}{2}|2u|^2-\Re\eta_0(4u)+\Re\eta_0(2u))s}+e^{-(\frac{1}{2}|2u|^2+\Re\eta_0(2u))(r-s)}\notag\\
&&~~~~~~~~~~~~~~~~~~~~~~+2e^{-(\frac{1}{2}|2u|^2-\Re\eta_0(2u))s}+2 e^{-(\frac{1}{2}|2u|^2-\Re\eta_0(2u))r}+2 \Big]ds dr \notag\\
%&&=\frac{1}{4t^2}\int_0^t\int_0^r \Big[ e^{-[\frac{3}{2}|2u|^2+(2^\alpha-1)C_\alpha |2u|^\alpha]s-(\frac{1}{2}|2u|^2+C_\alpha|2u|^\alpha)r}+e^{-(\frac{1}{2}|2u|^2-C_\alpha|2u|^\alpha)(r-s)}\notag\\
%&&~~~~~~~~~~~~~~~~~~~~~~+2e^{-(\frac{1}{2}|2u|^2+C_\alpha |2u|^\alpha)s}+2 e^{-(\frac{1}{2}|2u|^2+C_\alpha r|2u|^\alpha)r}+2 \Big]ds dr \notag\\
%&&=\frac{1}{4t^2}\int_0^t\int_0^r \Big[ e^{-[\frac{3}{2}|2u|^2+(2^\alpha-1)C_\alpha |2u|^\alpha]s-(\frac{1}{2}|2u|^2+C_\alpha|2u|^\alpha)r}+e^{-(\frac{1}{2}|2u|^2-C_\alpha|2u|^\alpha)(r-s)}\notag\\
%&&~~~~~~~~~~~~~~~~~~~~~~+2e^{-(\frac{1}{2}|2u|^2+C_\alpha |2u|^\alpha)s}+2 e^{-(\frac{1}{2}|2u|^2+C_\alpha r|2u|^\alpha)r}+2 \Big]ds dr \notag\\
&&=\frac{1}{4t^2}\int_0^t\int_0^r \Big[ e^{-Ar-Bs}+e^{-C(r-s)}+2e^{-As}+2 e^{-Ar}+2 \Big]ds dr \notag\\
%&&=\frac{1}{4t^2}\int_0^t \Big[ e^{-Br}\frac{1}{A}(-e^{-Ar}+1)+e^{-Cr}\frac{1}{C}(e^{Cr}-1)+\frac{2}{D}(-e^{-Dr}+1)+2re^{-Dr}+2r \Big] dr \notag\\
%&&=\frac{1}{4t^2}\int_0^t \Big[ \frac{1}{A}(e^{-Br}-e^{-(A+B)r})+\frac{1}{C}(1-e^{-Cr})+\frac{2}{D}(1-e^{-Dr})+2re^{-Dr}+2r\Big] dr \notag\\
%&&=\frac{1}{4t^2}\Big[ \frac{1}{AB}(-e^{-Bt}+1)-\frac{1}{A(A+B)}(-e^{-(A+B)t}+1)+\frac{1}{C}t-\frac{1}{C^2}(-e^{-Ct}+1)\notag\\
%&&~~~~~~~~~~~~~~~~~~~~~~+\frac{2}{D}t-\frac{2}{D^2}(-e^{-Dr}+1)-\frac{2}{D}[te^{-Dt}-\frac{1}{D}(e^{-Dr}-1)]+t^2\Big] dr \notag\\
%&&=\frac{1}{4t^2}\Big[ \frac{1}{AB}(-e^{-Bt}+1)-\frac{1}{A(A+B)}(-e^{-(A+B)t}+1)+\frac{1}{C}t-\frac{1}{C^2}(-e^{-Ct}+1)\notag\\
%&&~~~~~~~~~~~~~~~~~~~~~~+\frac{2}{D}t-\frac{2}{D}te^{-Dt}+t^2\Big] dr \notag\\
%&&=\frac{1}{4t^2}\Big[- \frac{1}{AB}e^{-Bt}+\frac{1}{AB}+\frac{1}{A(A+B)}e^{-(A+B)t}-\frac{1}{A(A+B)}+\frac{1}{C}t+\frac{1}{C^2}e^{-Ct}-\frac{1}{C^2}\notag\\
%&&~~~~~~~~~~~~~~~~~~~~~~+\frac{2}{D}t-\frac{2}{D}te^{-Dt}+t^2\Big] dr \notag\\
%&&=\frac{1}{4t^2}\Big[\frac{1}{ A(A+B)}e^{-(A+B)t}- \frac{1}{AB}e^{-Bt}+\frac{1}{C^2}e^{-Ct}-\frac{2}{D}te^{-Dt}+(\frac{1}{C}+\frac{2}{D})t+t^2+\frac{1}{B(A+B)}-\frac{1}{C^2}\Big]  \notag\\
%&&=\frac{1}{4t^2}\big[\frac{1}{B(A+B)}-\frac{1}{C^2}+(\frac{1}{C}+\frac{2}{B})t+t^2+\frac{1}{A(A+B)}e^{-(A+B)t}- \frac{1}{AB}e^{-Bt}+\frac{1}{C^2}e^{-Ct}-\frac{2}{B}te^{-Bt}\big] \notag\\
&&=\frac{1}{4t^2}\big[\frac{1}{A(A+B)}-\frac{1}{C^2}+(\frac{1}{C}+\frac{2}{A})t+t^2+\frac{e^{-(A+B)t}}{B(A+B)}- \frac{e^{-At}}{AB}+\frac{e^{-Ct}}{C^2}-\frac{2te^{-At}}{A}\big] .\notag\\
\end{eqnarray}
Here we used the stationary independent increments property of the Brownian motion and L\'evy motion. By Taylor expansion \cite[Page 40]{Sato} with $u=(1,1)^T$, we can find that $B=\frac{1}{2}|4u|^2-\frac{1}{2}|2u|^2-\Re\eta_0(4u)+\Re\eta_0(2u)>0$, $C=\frac{1}{2}|2u|^2+\Re\eta_0(2u)>0$, so we have $\mathbb{E}\Big[\big|\frac{1}{t}\int_0^t g_i (q_s,p_s)ds\big|^2\to \frac{1}{4}$ as $t\to \infty$.
%%%%%%%%%%%%
Thus,
\begin{eqnarray}
&&\mathbb{E}\Big[\big|\frac{1}{t}\int_0^t g_i (q_s,p_s)ds -Q^g(h_i)\big|^2 \Big]
=\mathbb{E}\Big[\frac{1}{t^2} \big|\int_0^t \cos^2 (\langle u, B_s \rangle+\langle u, L_s \rangle)ds - \frac{1}{2} \big|^2 \Big]\notag\\
&&=\mathbb{E}\Big[\frac{1}{t^2} \big( \int_0^t \cos^2 (\langle u, B_s \rangle+\langle u, L_s \rangle)ds \big)^2\Big]-\mathbb{E}\Big[\frac{1}{t}\int_0^t \cos^2 (\langle u, B_s \rangle+\langle u, L_s \rangle)ds \Big]+\frac{1}{4}\notag\\
&&\to 0, ~ as ~t\to\infty.
\notag
\end{eqnarray}
Moreover, combining (\ref{Eqnarray-moment} - \ref{Eqnarray-2moment}) and taking the square root, the rate of convergence is of the order $\eta(t)=\frac{c}{\sqrt{t}}$ as $t\to\infty$ (c is a constant).
\end{proof}

\section*{Acknowledgements}
The authors are grateful to Qingshan Chen, Guan Huang, Haitao Xu, Guowei Yu, Jianlu Zhang, Lei Zhang and Yanjie Zhang for helpful discussions and comments. This work was partly supported by the NSFC grants 11531006 and  11771449.

\section*{References}
\bibliography{mybibfile}

\end{document}